\providecommand{\dotdiv}{
  \mathbin{
    \vphantom{+}
    \text{
      \mathsurround=0pt 
      \ooalign{
        \noalign{\kern-.35ex}
        \hidewidth$\smash{\cdot}$\hidewidth\cr 
        \noalign{\kern.35ex}
        $-$\cr 
      }%
    }%
  }%
}
\newcommand\dotminus{%
  \ooalign{\hidewidth\raise1ex\hbox{.}\hidewidth\cr$\;-\;$\cr}%
}
\newtheorem{proposition}{Proposici\'on}[section]
\newtheorem{assumption}[proposition]{Assumption}
\newtheorem{Prop}[proposition]{Proposition}
\newtheorem{Coro}[proposition]{Corollary}
\newtheorem{lem}[proposition]{Lemma}
\newtheorem{hec}[proposition]{Fact}
\newtheorem{theo}[proposition]{Theorem}
\newtheorem{nota}[proposition]{Note}
\newtheorem{defi}[proposition]{Definition}
\newtheorem{Ej}[proposition]{Example}
\newtheorem{Ejs}[proposition]{Examples}
\newtheorem{remark}[proposition]{Remark}
\newtheorem{notation}[proposition]{Notation}
\newcommand{\eop}[1]{
\hspace{10mm} \vspace{-6mm}
\begin{flushright}
\qedsymbol$_{\text{#1}}$\\ \ \\
\end{flushright}
}
\newenvironment{prueba}[1][{\it Proof}]{\noindent {\it #1.} }{}
\newcommand{\bdem}[1][Proof]{\begin{prueba}[#1]}
\newcommand{\edem}[1][]{\eop{#1}
\end{prueba}}
\def\bsdem{\begin{prueba}[Reference]}\def\bsindem{\begin{proof}[\ ]}
\mathchardef\mhyphen="2D
 \author[D. Reyes]{David Reyes}
\email{davreyesgao@unal.edu.co}
\address{Departamento de Matem\'aticas, Universidad Nacional de Colombia, AK 30 $\#$ 45-03 c\'odigo postal 111321, Bogota, Colombia.}
\author[P. Zambrano]{Pedro H. Zambrano}
\email{phzambranor@unal.edu.co, phzambranor@gmail.com}
\address{Departamento de Matem\'aticas, Universidad Nacional de Colombia, AK 30 $\#$ 45-03 c\'odigo postal 111321, Bogota, Colombia.}
\thanks{The first author wants to thank the second author for the time devoted to advise his undergraduate thesis, where this paper is one of its fruits. The second author wants to thank Universidad Nacional de Colombia for the grant ``Convocatoria para el apoyo a proyectos de investigaci\'on y creaci\'on art\'istica de la sede Bogot\'a de la Universidad Nacional de Colombia - 2019'' c\'odigo Hermes 48359}
\begin{document}
\title[A characterization of Continuous Logic $\dots$]{A characterization of Continuous Logic by using quantale-valued logics}

\date{}
\begin{abstract}
In this paper, we propose a generalization of Continuous Logic (\cite{BeBeHeUs}) where the distances take values in suitable co-quantales (in the way as it was proposed in \cite{Fla97}). By assuming suitable conditions (e.g., being co-divisible, co-Girard and a $\mathbf{V}$-domain), we provide, as test questions, a proof of a version of the Tarski-Vaught test (Proposition~\ref{test de Tarski-Vaught}) and \L o\'s Theorem (Theorem~\ref{Teorema de Los}) in our setting. Iovino proved in \cite{Io01} that there is no any logic extending (equivalent logics to) Continuous Logic satisfying both Countable Tarski-Vaught chain Theorem and Compactness Theorem. Since $[0,1]$ satisfies all of the assumptions given above, we get new logics by dropping any of those assumptions.

\smallskip
\noindent \textbf{Keywords.} {metric structures, lattice valued logics, co-quantales, co-Girard, co-divisibility, domains, Tarski-Vaught test, \L o\'s theorem}.

\smallskip
\noindent \textbf{Mathematics Subject Classification 2020.} {Primary: 03C90, 03B60, 03C65, 03C20, 03C50.
Secundary:  06F07, 18B35, 03C66.}

\end{abstract}
\date{\today}
\maketitle

\section{Introduction}

S. Shelah and J. Stern proved in~\cite{ShSt78} that a first order attempt of study of classes of Banach Spaces has a ``bad'' behavior  (this has a very high Hanf number, having a model-theoretical behavior similar to a second order logic of binary relations). This led to develop a suitable logic beyond first order logic, in order to do a suitable model-theoretic analysis of Banach Spaces.
\\ \\
\indent In \cite{ChKe66}, C. C. Chang and H. J. Keisler proposed a new logic with truth values within a compact Hausdorff topological space, which was the first time where the term {\it Continuous Logic} appeared. They developed basics on Mathematical Logic in this book, but by then Model Theory had not been very developed (Morley's first order categoricity theorem had just been proved, and there was no stability theory by then). For some reason, people did not continue working on this kind of logic until it was rediscovered in the 90's by W. Henson and J. Iovino (see~\cite{HeIo02,Io95}) and later by I. Ben-Yaacov et al (see~\cite{BeBeHeUs}), but in the particular case by taking the truth values in the unitary interval $[0,1]$, focusing on the study of structures based on complete metric spaces (e.g. Hilbert spaces together with bounded operators -see~\cite{ArBe09}-, Banach spaces, Probability spaces -see \cite{BeHe04}-). This logic is known as {\it Continuous Logic} Because of some technical reasons, people working on Continuous Logic have to consider strong assumptions on the involved operators  (e.g. boundness) in order to axiomatize classes of metric structures in this logic. This took us to the notion of {\it Metric Abstract Elementary Class} (see \cite{HiHy09,Za11}) for being able to deal with non-axiomatizable -in Continuous Logic- classes of complete metric structures. However, this approach does not consider topological spaces in general.
\\ \\
\indent 
Independently, Lawvere provided in~\cite{Law73} a framework in Category Theory for being able to consider a logic with generalized truth values in order to study metric spaces from this point of view. However, there is no a deep model-theoretic study in this paper. 
\\ \\
\indent 
There is an attempt of a study of first order Model Theory for Topological Spaces (see~\cite{FlZi}), but it was just suitable to study particular algebraic examples like Modules and Topological Groups, due to the algebraic nature of first order logic. Moreover, this approach was left as a model-theoretic study of general Topological Spaces but it was the beginning of the model-theoretic study of Modules (see~\cite{Prest}).
\\ \\
\indent 
Quantales are a suitable kind of lattices introduced for being able to deal with locales (a kind of lattices which generalizes the ideal of open sets of a topological space) and multiplicative lattices of ideals from Ring Theory and Functional Analysis (e.g., $C^*$-algebras and von Neumann algebras).
\\ \\
\indent 
Considering the contravariant notions in a quantale (which we will called {\it co-quantales}), Flagg gave in~\cite{Fla97} (Theorem 4.15) a way to deal with topological spaces as pseudo metric spaces (the Flagg's notion called {\it continuity spaces}) where the distance takes values in a suitable quantale built from the topology.
\\ \\
\indent 
In this paper, we will propose a generalization of Continuous Logic by defining distances with values in value co-quantales together with suitable assumptions (e.g., being co-divisible (or best, substractibility), co-Girard and a $\mathbf{V}$-domain). Potentially, we might study, from this point of view, topological structures more general than real-valued metric structures.
\\ \\
\indent 
In \cite{Io01}, J. Iovino proved that there is no logic for analytic structures that extends properly (logics equivalent to) Continuous Logic and
satisfies both the Compactness Theorem and the Elementary Chain Property. In this paper, by assuming a value co-quantale to be co-Girard (see Definition~\ref{cuantal co-Girard}) we prove that the related logic satisfies the Tarski-Vaught test (Proposition~\ref{test de Tarski-Vaught}) and therefore the Tarski-Vaught chain theorem holds.
\\ \\
\indent 
Additionally, if we assume that a value co-quantale is co-divisible (substractable) and a V-domain, then the related logic satisfies a version of the \L o\'s Theorem (Theorem~\ref{Teorema de Los}) and hence it satisfies compactness.
\\ \\
\indent 
On the other hand, $\mathbb{V}:=[0,1]$ satisfies to be co-Girard, co-divisible (substractable) and a $V$-domain, then if we drop any of these assumptions we get new logics. 
\\ \\
\indent 
This paper is organized as follows: In the second section we will provide basic facts in co-quantales. In the third section, we introduced our approach to co-quantales valued logics, analogously as it is done in Continuous Logic but by considering distances with values in a suitable co-quantale. As test questions, we provide a proof of a version of the Tarski-Vaught test (Proposition~\ref{test de Tarski-Vaught}) and a version of \L o\'s Theorem (Theorem~\ref{Teorema de Los}). A difference between our approach and Continuous Logic lies on the fact that we can provided a version of \L o\'s Theorem for $D$-products (before doing the quotient to force a $D$-product being an actual metric space -which is called a $D$-ultraproduct in Continuous Logic-). We notice that the same proof for $D$-products works for $D$-ultraproducts in our setting. As consequences of \L o\'s Theorem, in a similar way as in first order and Continuous logics, we provide a proof of a version of Compactness Theorem (Corollary~\ref{teorema de compacidad}) and of the existence of $\omega_1$-saturated models (Proposition~\ref{existencia de modelos saturados}).

\section{Value co-quantales and Continuity Spaces}
In this section, we provide some basics on value co-quantales. 

\subsection{Value lattices}

\begin{defi}\label{relaci\'on prec}
Given $L$ a complete lattice and $x,y\in L$, we say that {\bf $x$ is co-well below $y$} (denoted by $x\prec y$), if and only, if for all $A\subseteq X$, if $\bigwedge A \leq x$ then there exists $a\in A$ such that $a\leq y$.
\end{defi}

\begin{remark}
In $[0,\infty]$, $x\prec y$ agrees with $x<y$.
\end{remark}


\begin{lem} (\cite{Fla97}; Lemma 1.2)\label{propiedades b\'asicas de la relaci\'on prec}
Let $L$ be a complete lattice, then for all $x,y,z \in L$ we have that
\begin{enumerate}
    \item $y\prec x$ implies $y\leq x$
    \item  $z\leq y$ and $y\prec x$ imply $z\prec x$
    \item  $y\prec x$ and $x\leq z$ imply $y\prec z$.
\end{enumerate}
\end{lem}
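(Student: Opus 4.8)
The plan is to unwind Definition~\ref{relaci\'on prec} directly in each of the three cases; no heavy machinery is needed, since $\prec$ is defined by a single quantifier over subsets of $L$ and the lattice order interacts with it transparently. Throughout I will use that $\bigwedge\{t\}=t$ for any $t\in L$, which is the only ``computation'' involved.

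First I would prove (1). Given $y\prec x$, apply the defining condition to the singleton set $A=\{y\}$. Since $\bigwedge A=y\leq y$, the definition yields some $a\in A$ with $a\leq x$; but $A=\{y\}$ forces $a=y$, hence $y\leq x$. The reason for using a singleton rather than, say, $A=\{x,y\}$ is to avoid any dependence on further properties of $L$ and to sidestep conventions about $\bigwedge\emptyset$.

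Next I would handle (2) and (3), which are pure ``monotonicity'' arguments. For (2), assume $z\leq y$ and $y\prec x$, and let $A\subseteq L$ with $\bigwedge A\leq z$; then $\bigwedge A\leq z\leq y$, so $y\prec x$ gives $a\in A$ with $a\leq x$, which is exactly what $z\prec x$ demands. For (3), assume $y\prec x$ and $x\leq z$, and let $A\subseteq L$ with $\bigwedge A\leq y$; then $y\prec x$ yields $a\in A$ with $a\leq x\leq z$, giving $y\prec z$.

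I do not expect a genuine obstacle here: the statement is essentially a bookkeeping exercise on the definition, and the transitivity-of-$\leq$ steps in (2) and (3) are immediate. The only point deserving a moment's care is part (1), where one must make a concrete choice of test set $A$; the singleton $\{y\}$ works cleanly and keeps the argument independent of any special structure on $L$ beyond completeness.
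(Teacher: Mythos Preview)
Your argument is correct: each of the three items follows directly from Definition~\ref{relaci\'on prec}, and the singleton test set $A=\{y\}$ is exactly the right instantiation for part~(1). The paper does not supply its own proof here---it simply cites \cite{Fla97}, Lemma~1.2---so there is nothing to compare beyond noting that your unwinding of the definition is the standard one.
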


\begin{lem}\label{continuidad} (\cite{Fla97}; Lemma 1.3)
If $L$ is a complete lattice, then for all $A\subseteq L$ and $x\in L$ we have that $\bigwedge A  \prec x$ if and only if there exists $a\in A$ such that $a\prec x$.
\end{lem}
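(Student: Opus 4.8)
The plan is to prove the two implications separately, the converse being immediate and the direct one requiring a short contrapositive argument. For the converse, suppose $a\prec x$ for some $a\in A$. Since $a\in A$ we have $\bigwedge A\le a$, and then part (2) of Lemma~\ref{propiedades b\'asicas de la relaci\'on prec} yields $\bigwedge A\prec x$ at once.

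For the direct implication I would argue by contraposition, producing from the assumption that $a\not\prec x$ for every $a\in A$ a subset of $L$ that witnesses $\bigwedge A\not\prec x$ through Definition~\ref{relaci\'on prec}. Unfolding the negation of $a\prec x$, for each $a\in A$ there is a set $B_a\subseteq L$ with $\bigwedge B_a\le a$ and $b\not\le x$ for all $b\in B_a$. Put $B:=\bigcup_{a\in A}B_a$. Then for every $a\in A$ we have $\bigwedge B\le\bigwedge B_a\le a$, hence $\bigwedge B\le\bigwedge_{a\in A}a=\bigwedge A$; on the other hand no element of $B$ lies below $x$, since any such element belongs to some $B_a$. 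Thus $B$ exhibits the failure of the defining condition of $\bigwedge A\prec x$, i.e.\ $\bigwedge A\not\prec x$, which is what contraposition requires.

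The only genuine subtlety is the passage from the pointwise data $(B_a)_{a\in A}$ to a single witness $B$, which is precisely where taking the union—and the resulting inequality $\bigwedge B\le\bigwedge A$—does the work; I expect this to be the crux. One should also watch the degenerate case $A=\emptyset$, where $\bigwedge A$ is the top element and both sides of the equivalence are false (one checks $\top\not\prec x$ for all $x$ by testing the defining condition against the empty set), so the statement holds trivially there. A direct proof of $(\Rightarrow)$ is possible as well, but the contrapositive formulation makes the witness construction cleanest.
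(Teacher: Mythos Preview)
Your argument is correct. The paper does not actually supply a proof of this lemma---it merely cites \cite{Fla97}, Lemma~1.3---so there is no in-paper proof to compare against. Your contrapositive construction via $B=\bigcup_{a\in A}B_a$ is the standard one and works as written; in fact the separate treatment of $A=\emptyset$ is not even needed, since in that case your construction yields $B=\emptyset$, which already witnesses $\bigwedge\emptyset=\top\not\prec x$.
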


\begin{defi} \label{aproximaci\'on}
A complete lattice $L$ is said to be {\bf co-continuous} if  $a=\bigwedge\{b\in L : a\prec b\}$ provided that $a\in L$.
\end{defi}

\begin{remark}
The notion of co-continuity is called {\it completely distributivity} in~\cite{Fla97}, but that notion corresponds to the distributivity of arbitrary joins over arbitrary meets in the standard study of lattices. Because of that, it is a bit confussing following~\cite{Fla97,FlaKop97}. 
\end{remark}


\begin{lem}\label{densidad}(\cite{Fla97}; Lema 1.6])
Given $L$ a co-continuous lattice and $x, y \in L$ provided that $x\prec y$,  there exists $z\in L$ such that $x\prec z$ y $z\prec y$.
\end{lem}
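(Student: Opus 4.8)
The plan is to let complete distributivity do the work: it rewrites $x$ as a meet of elements that are co-well above it (i.e., elements $b$ with $x\prec b$), and then Lemma~\ref{continuidad} transfers the relation $x\prec y$ onto one single such element, which will be the desired interpolant $z$.

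Concretely, I would first set $B:=\{b\in L : x\prec b\}$. By Definition~\ref{aproximaci\'on} (complete distributivity applied to $a=x$) we have $x=\bigwedge B$, so the hypothesis $x\prec y$ reads $\bigwedge B\prec y$. Now I would invoke Lemma~\ref{continuidad} with the family $B$ and the point $y$: it yields some $b_0\in B$ with $b_0\prec y$. Setting $z:=b_0$, membership in $B$ gives $x\prec z$ and the previous line gives $z\prec y$, which is exactly the statement.

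There is essentially nothing to compute, so I do not anticipate a real obstacle: the substance of the statement has been front-loaded into Definition~\ref{aproximaci\'on} and Lemma~\ref{continuidad}, and the proof is just their one-step combination. The only point I would pause on is checking that $B$ is nonempty, so that the application of Lemma~\ref{continuidad} genuinely returns an element; were $B$ empty we would have $x=\bigwedge\emptyset=\top$, but $\top\prec y$ never holds (e.g. by Lemma~\ref{continuidad} with the empty family, or directly from Definition~\ref{relaci\'on prec} taking $A=\emptyset$), contradicting $x\prec y$, so $B\neq\emptyset$ comes for free.

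For contrast, if Lemma~\ref{continuidad} were not available one would have to run the longer route: put $C:=\{c\in L : \text{there is } b \text{ with } x\prec b \text{ and } b\prec c\}$, check $\bigwedge C=x$ using complete distributivity twice together with the identity $\bigwedge\bigcup_i S_i=\bigwedge_i\bigwedge S_i$, and then pull the interpolant out of $\bigwedge C=x\prec y$. With Lemma~\ref{continuidad} in hand this detour is unnecessary, and I would present only the short argument above.
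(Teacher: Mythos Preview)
Your argument is correct: setting $B=\{b:x\prec b\}$, complete distributivity gives $x=\bigwedge B$, and Lemma~\ref{continuidad} applied to $\bigwedge B\prec y$ immediately produces the interpolant. The paper does not supply its own proof of this lemma (it is only cited from \cite{Fla97}), so there is nothing to compare against directly; that said, your proof is precisely the standard interpolation argument for the (dual) way-below relation in completely distributive lattices, and is what one finds in the cited source. Your aside on $B\neq\emptyset$ is harmless but unnecessary, since Lemma~\ref{continuidad} itself forces the existence of some $b_0\in B$ once you know $\bigwedge B\prec y$.
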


\begin{defi}\label{propiedadefiltro}
A {\bf value lattice} is a co-continuous lattice $L$ provided that 
\begin{enumerate}
    \item $0\prec 1$
    \item if $\delta, \delta^{\prime} \in L$ satisfy $0\prec \delta$ and $0\prec \delta^{\prime}$, then $0\prec \delta \wedge \delta^{\prime} $.
\end{enumerate}
\end{defi}

\begin{Ejs}
\begin{enumerate}
\item The 2-valued Boolean algebra ${\bf 2}:=\{0,1\}$, where $0<1$.
\item The ordinal number $\omega+1:=\{0,1,...,\omega\}$ together with the usual ordering.
\item The unit interval $I=[0,1]$ with the usual real ordering.
\item $([0,1],\ge)=([0,1],\le)^{op}$ (which we will denote by $\mathcal{E}$).
\item $([0,\infty],\le_{usual})$.
\item (\cite{FlaKop97}; pg 115-117)
Given a set $R$, let us denote $\mathcal{P}_{fin}(R)=\{X\in \mathcal{P}(X) : X  \text{ is finite} \}$ and for all $X\in \mathcal{P}_{fin}(R)$ we denote $\downarrow(X)=\{Y\in \mathcal{P}(X) : Y\subseteq X\}$. \\
Given $\Omega(R)=\{p\in \mathbf{P}(P_{fin}(R)) :  X\in p \text{ implies } \downarrow(X)\subseteq p  \}$, then $(\Omega(R),\supseteq)$ is a valued lattice.
\end{enumerate}
\end{Ejs}

\subsection{Value co-quantales}
In this paper, we do not work with the usual notion of quantale. We consider the contravariant notion (which we call {\it co-quantale}) because this approach allows us to work with a notion of distance (pseudo-metric) with values in the co-quantale (see~\cite{Fla97}), in an analogous way as the metric structures given in Continuous Logic. We know that this is not the standard way to study quantales, but we chose this setting in order to do a similar study as it is done in Continuous Logic.

\begin{defi}\label{cuantal}\index{Cuantal}
A {\bf co-quantale} ${\bf V}$ is a complete lattice provided with a commutative monoid structure $({\bf V},+)$ such that
\begin{enumerate}
    \item  The minimum element $0$ of ${\bf V}$ is the identity of $({\bf V},+)$; i.e., $a+0=a$ for all $a\in V$ and
    \item  for all $a\in V$ and $(b_i)_{i\in I}\in V^I$, $a+\bigwedge_{i\in I}b_i=\bigwedge_{i\in I}(a+b_i)$
    
\end{enumerate}
\end{defi}

\begin{Prop}\label{monotonia de la suma}(\cite{Fla97}; Pg 6])
If ${\bf V}$ is a co-quantale, then for all $a, b,c\in {\bf V}$ we have that
\begin{enumerate}
    \item $a+1=1$
    \item $a\leq b$ implies $c+a\leq c+b$
\end{enumerate}
\end{Prop}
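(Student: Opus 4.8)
The plan is to obtain both statements directly from the infinitary distributivity axiom~(2) of Definition~\ref{cuantal}, specialising the index set $I$ appropriately, together with two elementary facts about complete lattices: the empty meet equals the top element $1$, and $x\le y$ is equivalent to $x\wedge y=x$.

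For item~(1), I would apply axiom~(2) with $I=\emptyset$. Since $\mathbf{V}$ is a complete lattice, $\bigwedge_{i\in\emptyset}b_i=1$ and $\bigwedge_{i\in\emptyset}(a+b_i)=1$, so the axiom gives
\[
a+1=a+\bigwedge_{i\in\emptyset}b_i=\bigwedge_{i\in\emptyset}(a+b_i)=1.
\]
(If one prefers to avoid the empty-index convention, note that $a+1\le 1$ holds trivially because $1$ is the greatest element of $\mathbf{V}$, while the reverse inequality still needs the distributivity axiom; the empty-meet argument above is the cleanest route.)

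For item~(2), assume $a\le b$, so that $a\wedge b=a$. Applying axiom~(2) to the two-element family $(b_i)_{i\in\{1,2\}}$ with $b_1=a$ and $b_2=b$ yields $c+(a\wedge b)=(c+a)\wedge(c+b)$. Substituting $a\wedge b=a$ on the left-hand side gives $c+a=(c+a)\wedge(c+b)$, which is precisely the assertion $c+a\le c+b$. By commutativity of $+$ this also gives monotonicity in the first argument, $a+c\le b+c$.

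I do not expect any real obstacle here: the argument is a routine specialisation of the defining axioms. The only points that deserve a word of care are the convention that the meet of the empty family in a complete lattice is the top element $1$ (used in~(1)), and the observation that the binary-meet instance of axiom~(2) is legitimate, being the case where $I$ is a two-element set (used in~(2)).
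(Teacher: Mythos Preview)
Your argument is correct. Both parts follow cleanly from Definition~\ref{cuantal}(2) by specialising the index set: the empty meet gives $a+1=1$, and the two-element meet gives monotonicity via $a\wedge b=a$. The paper itself does not supply a proof of this proposition; it merely records the statement with a citation to \cite{Fla97}, so there is no in-paper argument to compare against. Your proof is the standard one and fills that gap without issue.
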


\begin{Prop}\label{adjunci\'on}(\cite{Fla97}; Thrm 2.2])
 Given ${\bf V}$ a co-quantale and $a,b\in {\bf V}$, define $a\dotdiv b:=\bigwedge\{r\in V : r + b \geq a\}$. Therefore,  for all $c\in {\bf V}$ we have that
\begin{enumerate}
    \item $a\dotdiv b\leq c$ if and only if $a\leq b + c$
     \item $a\leq (a\dotdiv b)+b$
    \item $(a+b)\dotdiv b\leq a$
     \item $a\dotdiv b=0$ if and only if $a\leq b$
    \item $a\dotdiv(b+c)=(a\dotdiv b)\dotdiv c=(a\dotdiv c)\dotdiv b$
    \item $a\dotdiv c\leq(a\dotdiv b)+(b\dotdiv c)$
\end{enumerate}
\end{Prop}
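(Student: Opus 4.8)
The plan is to first establish items (1) and (2), which together say that $-\dotdiv b$ is left adjoint to $b+-$ on the poset ${\bf V}$, and then to deduce (3)--(6) as purely formal consequences of this adjunction together with the commutativity and associativity of $+$. I would start with (2), since it follows directly from the infinite distributive law in Definition~\ref{cuantal}:
\[
(a\dotdiv b)+b=\Bigl(\bigwedge\{r\in V : r+b\geq a\}\Bigr)+b=\bigwedge\{r+b : r+b\geq a\}.
\]
Every element of the set on the right is $\geq a$, hence so is its infimum, which gives $a\leq(a\dotdiv b)+b$. For (1), the right-to-left implication is immediate: if $a\leq b+c$ then (using commutativity) $c$ lies in $\{r : r+b\geq a\}$, so $a\dotdiv b\leq c$ by definition of the infimum. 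Conversely, if $a\dotdiv b\leq c$, then by (2) and the monotonicity of $+$ (Proposition~\ref{monotonia de la suma}) we get $a\leq(a\dotdiv b)+b\leq c+b=b+c$.

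With the adjunction in hand the rest is routine. For (3), commutativity gives $a+b\leq b+a$, so (1) with $c:=a$ yields $(a+b)\dotdiv b\leq a$. For (4), since $0$ is the least element, $a\dotdiv b=0$ iff $a\dotdiv b\leq 0$, which by (1) holds iff $a\leq b+0=b$. For (5), I would observe that for every $d\in{\bf V}$,
\[
a\dotdiv(b+c)\leq d \iff a\leq(b+c)+d=b+(c+d) \iff a\dotdiv b\leq c+d \iff (a\dotdiv b)\dotdiv c\leq d,
\]
using (1) three times and associativity; since $a\dotdiv(b+c)$ and $(a\dotdiv b)\dotdiv c$ have the same principal up-sets, substituting $d$ equal to each of them gives $a\dotdiv(b+c)=(a\dotdiv b)\dotdiv c$, and the symmetric identity $(a\dotdiv b)\dotdiv c=(a\dotdiv c)\dotdiv b$ follows by swapping $b$ and $c$ via commutativity. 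Finally, for (6), by (1) it suffices to show $a\leq c+(a\dotdiv b)+(b\dotdiv c)$; but (2) gives $b\leq(b\dotdiv c)+c$, and then $a\leq(a\dotdiv b)+b\leq(a\dotdiv b)+(b\dotdiv c)+c$, again by (2) and monotonicity.

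I do not anticipate a genuine obstacle here: the one point to be careful about is to derive (2) directly from the infinite distributive law instead of attacking (1) head-on, and to note that although Definition~\ref{cuantal} only posits distributivity of $+$ over $\bigwedge$ on one side, commutativity makes it two-sided, so no extra hypothesis is needed anywhere above.
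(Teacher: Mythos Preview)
Your argument is correct and well organized: you first establish the counit inequality (2) directly from the distributive law in Definition~\ref{cuantal}, then derive the adjunction (1), and obtain (3)--(6) as formal consequences. Each step checks out; in particular your Yoneda-style argument for (5) and the chained use of (2) for (6) are clean.

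There is nothing to compare against: the paper does not supply a proof of Proposition~\ref{adjunci\'on} but simply cites \cite{Fla97}, Theorem~2.2. Your write-up thus fills in what the paper omits, and does so by the standard adjunction approach one would expect (and which underlies the paper's later remark that ``$\dotdiv$ is the left adjoint of $+$'').
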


Since $\dotdiv$ is the left adjoint of $+$ and it preserves categorical limits, we have the following fact.

\begin{hec}\label{resta truncada es adjunto izquierdo}
Let ${\bf V}$ be a co-quantale, then for any sequence $(b_i)_{i\in I}$ and any element $a\in {\bf V}$ we have that $(\bigvee_{i\in I}b_i)\dotdiv a=\bigvee_{i\in I}(b_i\dotdiv a)$ 
\end{hec}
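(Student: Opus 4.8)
The plan is to derive this from the adjunction between $\dotdiv$ and $+$ recorded in Proposition~\ref{adjunci\'on}(1), namely that for all $x,c\in\mathbf{V}$ we have $x\dotdiv a\leq c$ if and only if $x\leq a+c$. This says precisely that $(-)\dotdiv a$ is a left adjoint (in the poset sense) of $a+(-)$, and left adjoints preserve joins; the rest is just making the two inequalities explicit.

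For the inequality $\bigvee_{i\in I}(b_i\dotdiv a)\leq\left(\bigvee_{i\in I}b_i\right)\dotdiv a$, I would fix $j\in I$ and observe that $b_j\leq\bigvee_{i\in I}b_i\leq a+\left(\left(\bigvee_{i\in I}b_i\right)\dotdiv a\right)$, where the last step is Proposition~\ref{adjunci\'on}(2). Applying Proposition~\ref{adjunci\'on}(1) with $x=b_j$ and $c=\left(\bigvee_{i\in I}b_i\right)\dotdiv a$ gives $b_j\dotdiv a\leq\left(\bigvee_{i\in I}b_i\right)\dotdiv a$; since $j$ was arbitrary, taking the join over $I$ yields the desired inequality.

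For the reverse inequality, set $c:=\bigvee_{i\in I}(b_i\dotdiv a)$. For each $j\in I$ we have $b_j\dotdiv a\leq c$, so Proposition~\ref{adjunci\'on}(1) gives $b_j\leq a+c$; hence $\bigvee_{i\in I}b_i\leq a+c$, and one final application of Proposition~\ref{adjunci\'on}(1) (now with $x=\bigvee_{i\in I}b_i$) gives $\left(\bigvee_{i\in I}b_i\right)\dotdiv a\leq c$. Combining the two inequalities gives the claimed equality.

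I do not expect a genuine obstacle here: the argument is a purely formal manipulation of the adjunction, and the only point requiring a little care is that Proposition~\ref{adjunci\'on} is phrased for individual elements, so part (1) must be invoked once for each index $j$ rather than simultaneously for the whole family. Alternatively, one could simply quote the general categorical principle that a left adjoint preserves colimits together with the observation that in a poset colimits are exactly joins; spelling it out as above merely keeps the argument self-contained.
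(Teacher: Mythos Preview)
Your proof is correct and follows exactly the approach the paper intends: the paper does not give a detailed argument but simply records, just before the Fact, that ``$\dotdiv$ is the left adjoint of $+$'' and hence preserves the relevant (co)limits. Your write-up is nothing more than the elementwise unpacking of this adjunction via Proposition~\ref{adjunci\'on}(1)--(2), so there is no substantive difference between the two.
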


\begin{lem}\label{monoton\'ia-intercambio}
Given a co-quantale ${\bf V}$, for any $a, b\in {\bf V}$, $a\leq b$ implies that for all $c$ we have that $c\dotdiv a\geq c\dotdiv b$ and $a\dotdiv c \leq b\dotdiv c$.
\end{lem}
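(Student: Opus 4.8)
The plan is to exploit the adjunction from Proposition~\ref{adjunci\'on}(1), which characterizes $\dotdiv$ as the left adjoint of $+$, together with the basic monotonicity of $+$ from Proposition~\ref{monotonia de la suma}(2). Both claimed inequalities should fall out directly from these two facts once we unwind the relevant definitions; no appeal to completeness or to the lattice-theoretic machinery (the relation $\prec$, complete distributivity) should be needed here.

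For the second inequality, $a\dotdiv c\le b\dotdiv c$, I would argue as follows. By Proposition~\ref{adjunci\'on}(2) we have $b\le(b\dotdiv c)+c$. Since $a\le b$ by hypothesis, transitivity of $\le$ gives $a\le(b\dotdiv c)+c$. Now apply the adjunction Proposition~\ref{adjunci\'on}(1) with the roles $a:=a$, $b:=c$, $c:=b\dotdiv c$: from $a\le c+(b\dotdiv c)$ (using commutativity of $+$) we conclude $a\dotdiv c\le b\dotdiv c$, as desired.

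For the first inequality, $c\dotdiv a\ge c\dotdiv b$, I would instead start from $c\le(c\dotdiv a)+a$, again by Proposition~\ref{adjunci\'on}(2). Since $a\le b$, Proposition~\ref{monotonia de la suma}(2) (monotonicity of $+$ in the right argument, together with commutativity) yields $(c\dotdiv a)+a\le(c\dotdiv a)+b$, hence $c\le(c\dotdiv a)+b$. Applying the adjunction Proposition~\ref{adjunci\'on}(1) once more — this time with the first argument $c$, the subtracted element $b$, and witness $c\dotdiv a$ — we get $c\dotdiv b\le c\dotdiv a$, which is the claim.

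There is no serious obstacle here; the only thing to be careful about is keeping track of which argument of $+$ is being manipulated and inserting commutativity of $+$ where the statement of Proposition~\ref{adjunci\'on}(1) expects the subtracted element on a particular side. Both halves are genuinely two-line arguments, and in fact the pattern is symmetric: the "covariant" inequality in the second argument of $\dotdiv$ uses part (2) of Proposition~\ref{adjunci\'on} to feed into part (1), while the "contravariant" inequality in the first argument additionally invokes monotonicity of $+$ before feeding into part (1).
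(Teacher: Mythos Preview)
Your proof is correct and follows essentially the same route as the paper's: both arguments start from Proposition~\ref{adjunci\'on}(2), use the hypothesis $a\le b$ (with Proposition~\ref{monotonia de la suma}(2) for the contravariant case), and then conclude via the adjunction Proposition~\ref{adjunci\'on}(1). The only cosmetic difference is that the paper treats the contravariant inequality first and you treat the covariant one first.
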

\begin{proof}
Let $c\in {\bf V}$, then by Proposition~\ref{adjunci\'on} (2) we may say that $c\leq (c\dotdiv a) + a$. So, $c\leq (c\dotdiv a) + b$ whenever $a\leq b$ (by Proposition~\ref{monotonia de la suma} (2)), and so by Proposition~\ref{adjunci\'on} (1) this is equivalent to $c\dotdiv b\leq c\dotdiv a$.\\
On the other hand, since $a\leq b$ then
\begin{eqnarray*}
b&\leq& (b\dotdiv c)+c \ (\text{Proposition~\ref{adjunci\'on} (2)})\\
a&\leq& (b\dotdiv c)+c \ (\text{ since } a\leq b)\\
a\dotdiv c&\leq& b\dotdiv c \ (\text{Proposition~\ref{adjunci\'on} (1)})
\end{eqnarray*}
\end{proof}

\begin{Prop}\label{residuar un \'infimo}
Given ${\bf V}$ a co-quantale, $a\in {\bf V}$ and a sequence $(b_i)_{i\in I}$ in ${\bf V}$, we have that $a\dotdiv\bigwedge_{i\in I}b_i=\bigvee_{i\in I}(a\dotdiv b_i)$. 
\end{Prop}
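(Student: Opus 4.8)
The plan is to deduce the identity from the adjunction characterising $\dotdiv$ that is recorded in Proposition~\ref{adjunci\'on}(1): for all $x,y,z\in{\bf V}$ one has $x\dotdiv y\le z$ if and only if $x\le y+z$. Since an element of a complete lattice is determined by the set of elements lying above it, it suffices to check that for every $c\in{\bf V}$,
\[
a\dotdiv\bigwedge_{i\in I}b_i\le c
\quad\Longleftrightarrow\quad
\bigvee_{i\in I}(a\dotdiv b_i)\le c ,
\]
and then conclude by antisymmetry, applying the equivalence with $c$ equal to each of the two sides in turn.

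First I would rewrite the left inequality: by Proposition~\ref{adjunci\'on}(1) it is equivalent to $a\le\bigl(\bigwedge_{i\in I}b_i\bigr)+c$. Using commutativity of $+$ and the co-quantale distributivity axiom Definition~\ref{cuantal}(2), the right side of this equals $\bigwedge_{i\in I}(b_i+c)$, so the condition becomes $a\le b_i+c$ for every $i\in I$. Then I would translate each of these back with Proposition~\ref{adjunci\'on}(1): $a\le b_i+c$ iff $a\dotdiv b_i\le c$, and the quantifier ``for every $i$'' turns this into $\bigvee_{i\in I}(a\dotdiv b_i)\le c$. Chaining the equivalences yields exactly the displayed biconditional, hence the claimed equality.

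I do not expect a genuine obstacle here; the only point needing a little care is the use of commutativity to bring $c$ to the left of the infimum so that Definition~\ref{cuantal}(2) applies verbatim (this also covers the degenerate case $I=\varnothing$, where the identity reduces to $a\dotdiv 1=0$). For readers who prefer to bypass the adjunction dance, the two inequalities can be proved directly: ``$\ge$'' is immediate from $\bigwedge_{i\in I}b_i\le b_j$ together with Lemma~\ref{monoton\'ia-intercambio}, which gives $a\dotdiv b_j\le a\dotdiv\bigwedge_{i\in I}b_i$ for each $j$; and ``$\le$'' follows because for each $i$ we have $a\le b_i+(a\dotdiv b_i)\le b_i+\bigvee_{j\in I}(a\dotdiv b_j)$ by Proposition~\ref{adjunci\'on}(2), hence $a\le\bigwedge_{i\in I}\bigl(b_i+\bigvee_{j\in I}(a\dotdiv b_j)\bigr)=\bigl(\bigwedge_{i\in I}b_i\bigr)+\bigvee_{j\in I}(a\dotdiv b_j)$ by Definition~\ref{cuantal}(2), and one last application of Proposition~\ref{adjunci\'on}(1) finishes it.
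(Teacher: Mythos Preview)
Your proposal is correct. Your alternative ``direct'' argument is essentially the paper's own proof: the paper only spells out the nontrivial inequality $a\dotdiv\bigwedge_i b_i\le\bigvee_i(a\dotdiv b_i)$ by reducing it, via Proposition~\ref{adjunci\'on}(1), to $a\le\bigvee_i(a\dotdiv b_i)+\bigwedge_i b_i=\bigwedge_i\bigl(\bigvee_j(a\dotdiv b_j)+b_i\bigr)$ and then checking $a\le(a\dotdiv b_j)+b_j\le\bigvee_i(a\dotdiv b_i)+b_j$ for each $j$; the reverse inequality is left implicit. Your main approach---showing that both sides have the same set of upper bounds $c$ by chaining the adjunction with the distributivity axiom---is a cleaner packaging that delivers both inequalities simultaneously and makes the role of commutativity and of Definition~\ref{cuantal}(2) fully explicit; the paper's version is more hands-on but uses exactly the same ingredients.
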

\begin{proof} By Proposition~\ref{adjunci\'on} (1), $a\dotdiv\bigwedge_{i\in I}b_i\leq \bigvee_{i\in I}(a\dotdiv b_i)$ implies that $a\leq\bigvee_{i\in I}(a\dotdiv b_i)+\bigwedge_{i\in I}b_i=\bigwedge_{i\in I}((\bigvee_{i\in I}(a\dotdiv b_i))+b_i)$, which holds by Proposition~\ref{adjunci\'on} (2) and Proposition~\ref{monotonia de la suma} (2) allow us to say that $a\leq (a\dotdiv b_j)+b_j\leq (\bigvee_{i\in I}(a\dotdiv b_i)) + b_j$ for any $j\in J$.\\
Since $j\in I$ was taken arbitrarily, then $a\leq\bigwedge_{i\in I}((\bigvee_{i\in I}(a\dotdiv b_i))+b_i)$.
\end{proof}

\begin{defi}
A $({\bf V},+,0)$ co-quantale is said to be a {\bf value co-quantale} if ${\bf V}$ is a value lattice. 
\end{defi}

\begin{defi}\label{Filtro de positivos}
Given ${\bf V}$ a value co-quantale, let set ${\bf V}^+=\{\epsilon\in {\bf V} : 0\prec \epsilon \}$, and we call it the {\bf positives filter} of ${\bf V}$.
\end{defi}

\begin{lem}\label{argumento epsilon medios}(\cite{Fla97}; Thrm 2.9])
If $({\bf V},+,0)$ is a value co-quantale, given $\epsilon\in{\bf V}^+$ there exists $\delta\in {\bf V}^+$ such that $\delta + \delta\prec \epsilon$
\end{lem}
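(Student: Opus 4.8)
The plan is to reduce the statement to the single computation
$$\bigwedge\{\delta+\delta : \delta\in{\bf V}^+\}=0,$$
and then to read off the conclusion from the continuity of $\prec$. Indeed, the hypothesis $\epsilon\in{\bf V}^+$ is by definition $0\prec\epsilon$, so once the displayed equality is available, Lemma~\ref{continuidad} applied with $A=\{\delta+\delta : \delta\in{\bf V}^+\}$ and $x=\epsilon$ produces an $a\in A$ with $a\prec\epsilon$, that is, a $\delta\in{\bf V}^+$ with $\delta+\delta\prec\epsilon$, which is exactly what is wanted. Thus the whole proof lives in establishing the displayed equality.

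Two preliminary remarks set up that computation. First, ${\bf V}^+$ is nonempty: it contains $1$, since $0\prec 1$ by the first axiom in Definition~\ref{propiedadefiltro} (recall that a value co-quantale is in particular a value lattice). Second, $\bigwedge{\bf V}^+=0$: this is precisely complete distributivity (Definition~\ref{aproximaci\'on}) evaluated at $a=0$, since ${\bf V}^+=\{b\in{\bf V} : 0\prec b\}$ by Definition~\ref{Filtro de positivos}.

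Now fix an arbitrary $\delta_1\in{\bf V}^+$ and write $m:=\bigwedge\{\delta+\delta : \delta\in{\bf V}^+\}$. For every $\delta\in{\bf V}^+$, the second axiom in Definition~\ref{propiedadefiltro} gives $\delta_1\wedge\delta\in{\bf V}^+$, and monotonicity of $+$ (Proposition~\ref{monotonia de la suma}(2)) together with commutativity gives $(\delta_1\wedge\delta)+(\delta_1\wedge\delta)\le\delta_1+\delta$; since the left-hand side is a member of the family defining $m$, we get $m\le\delta_1+\delta$. Taking the meet over all $\delta\in{\bf V}^+$ and using the co-quantale distributive law (Definition~\ref{cuantal}(2)) together with $\bigwedge{\bf V}^+=0$ and $\delta_1+0=\delta_1$, we obtain $m\le\bigwedge_{\delta\in{\bf V}^+}(\delta_1+\delta)=\delta_1+\bigwedge{\bf V}^+=\delta_1$. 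As $\delta_1\in{\bf V}^+$ was arbitrary, $m$ is a lower bound of ${\bf V}^+$, hence $m\le\bigwedge{\bf V}^+=0$, so $m=0$, as claimed.

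The one place that needs care is the ``diagonal'': the distributive law of Definition~\ref{cuantal}(2) only pulls an infimum out of a single coordinate of $+$, so $\bigwedge\{\delta+\delta : \delta\in{\bf V}^+\}$ cannot be rewritten directly as a double infimum over two independent variables. The meet trick $\delta_1\wedge\delta\in{\bf V}^+$ is exactly what bridges this gap, and it is the only point where the second value-lattice axiom is used; everything else is a two-step application of the distributive law, so I do not anticipate further difficulty.
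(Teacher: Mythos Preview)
Your proof is correct. The paper does not supply its own argument for this lemma---it is stated with a bare citation to Flagg---so there is no in-paper proof to compare against; your self-contained argument (reducing to $\bigwedge\{\delta+\delta:\delta\in\mathbf{V}^+\}=0$ via the filter property of $\mathbf{V}^+$ and the co-quantale distributive law, then invoking Lemma~\ref{continuidad}) is the standard route and every step checks out.
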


By an obvious inductive argument, we can prove the following fact.

\begin{Coro}[\cite{LiRoZa18}; Remark~2.26]\label{argumentos epsilon sobre n}
Given any $\epsilon\in{\bf V}^+$ and $n\in\mathbb{N}\setminus \{0\}$, there exists $\theta\in{\bf V}^+$ such that $\stackbin{n \text{ times}}{n\theta:=\overbrace{\theta+\cdots+\theta}\prec\epsilon}$.
\end{Coro}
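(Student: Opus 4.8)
The plan is to reduce everything to a single halving step, namely Lemma~\ref{argumento epsilon medios}, and iterate it by induction. Rather than inducting directly on $n$, I would first prove by induction on $k\in\mathbb{N}$ the cleaner auxiliary statement: for every $\epsilon\in{\bf V}^+$ there is $\theta\in{\bf V}^+$ with $2^{k}\theta\prec\epsilon$, where $2^{k}\theta$ denotes the sum of $2^{k}$ copies of $\theta$ (and $2^{0}\theta:=\theta$). For the base case $k=0$, apply Lemma~\ref{argumento epsilon medios} to get $\delta\in{\bf V}^+$ with $\delta+\delta\prec\epsilon$; since $\delta=\delta+0\le\delta+\delta$ by Proposition~\ref{monotonia de la suma}(2), Lemma~\ref{propiedades b\'asicas de la relaci\'on prec}(2) gives $\delta\prec\epsilon$, so $\theta:=\delta$ works (alternatively, this base case is immediate from the density Lemma~\ref{densidad}).

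For the inductive step, fix $\epsilon\in{\bf V}^+$ and assume the claim for $k$. Again by Lemma~\ref{argumento epsilon medios} choose $\delta\in{\bf V}^+$ with $\delta+\delta\prec\epsilon$, and apply the induction hypothesis to $\delta$ to obtain $\theta\in{\bf V}^+$ with $2^{k}\theta\prec\delta$. By Lemma~\ref{propiedades b\'asicas de la relaci\'on prec}(1) this yields $2^{k}\theta\le\delta$, whence $2^{k+1}\theta=(2^{k}\theta)+(2^{k}\theta)\le\delta+\delta$ by Proposition~\ref{monotonia de la suma}(2); combining with $\delta+\delta\prec\epsilon$ and Lemma~\ref{propiedades b\'asicas de la relaci\'on prec}(2) gives $2^{k+1}\theta\prec\epsilon$, closing the induction. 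Finally, for arbitrary $n\ge 1$ and $\epsilon\in{\bf V}^+$, pick $k$ with $n\le 2^{k}$ and the corresponding $\theta\in{\bf V}^+$ with $2^{k}\theta\prec\epsilon$; since $0\le\theta$, repeated use of Proposition~\ref{monotonia de la suma}(2) gives $n\theta\le 2^{k}\theta$, and then Lemma~\ref{propiedades b\'asicas de la relaci\'on prec}(2) yields $n\theta\prec\epsilon$, which is the desired conclusion.

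I do not expect any real obstacle here — this is exactly the ``obvious inductive argument'' the statement alludes to — but there are two bookkeeping points one must not skip. The first is that monotonicity of the sum (Proposition~\ref{monotonia de la suma}(2)) is what licenses passing from $2^{k}\theta\le\delta$ to $2^{k+1}\theta\le\delta+\delta$, and from $n\le 2^{k}$ to $n\theta\le 2^{k}\theta$. The second only arises if one prefers a direct induction on $n$, splitting $(n+1)\theta=n\theta+\theta$: there the two witnesses produced by Lemma~\ref{argumento epsilon medios} and by the induction hypothesis need to be merged into one, which is done by passing to their meet and invoking the closure of ${\bf V}^+$ under binary infima (Definition~\ref{propiedadefiltro}(2)). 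The $2^{k}$-version above is designed precisely to avoid that merging, since there the witness $\theta$ occurs with the same coefficient on both sides of each inequality.
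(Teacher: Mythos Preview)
Your proof is correct and is precisely the kind of ``obvious inductive argument'' the paper alludes to (the paper gives no details beyond that phrase), so your approach via iterated halving is essentially the intended one. One minor remark: the merging concern you raise for a direct induction on $n$ is in fact avoidable even there, since after choosing $\delta$ with $\delta+\delta\prec\epsilon$ and then $\theta$ with $n\theta\prec\delta$, monotonicity gives $\theta\le n\theta\le\delta$ and hence $(n+1)\theta=n\theta+\theta\le\delta+\delta\prec\epsilon$; but your $2^{k}$ route is equally valid and arguably cleaner.
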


\begin{hec}(\cite{Fla97}; Thrm 2.10])\label{descender con la suma}
Given a value co-quantale ${\bf V}$ and $p\in {\bf V}$, $p=p+0=p+\bigwedge\{\epsilon\in V : 0\prec\epsilon\}=\bigwedge\{p+\epsilon : 0\prec\epsilon\}$.
\end{hec}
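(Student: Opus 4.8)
The plan is to chain together three elementary observations, two of which are axioms and one of which is an instance of complete distributivity.

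First I would dispatch $p = p+0$: this is simply the monoid identity axiom in Definition~\ref{cuantal}~(1), since $0$ is by hypothesis the identity of $({\bf V},+)$.

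Next, the heart of the matter is the identity $\bigwedge\{\epsilon\in V : 0\prec\epsilon\}=0$, i.e.\ $\bigwedge{\bf V}^+=0$. This is obtained by specializing the defining equation of a completely distributive lattice (Definition~\ref{aproximaci\'on}) to the element $a=0$: one gets $0=\bigwedge\{b : 0\prec b\}$, and the set $\{b : 0\prec b\}$ is precisely ${\bf V}^+$ by Definition~\ref{Filtro de positivos}. Here it is worth recalling that ${\bf V}$ is a value lattice (hence completely distributive) by the definition of a value co-quantale, so this step is legitimate. Consequently $p+0 = p+\bigwedge\{\epsilon\in V : 0\prec\epsilon\}$.

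Finally I would pull $p$ inside the infimum using the infinite distributivity axiom of a co-quantale, Definition~\ref{cuantal}~(2), applied to the family $(b_i)_{i\in I}$ enumerating ${\bf V}^+$: this yields $p+\bigwedge\{\epsilon : 0\prec\epsilon\}=\bigwedge\{p+\epsilon : 0\prec\epsilon\}$, completing the chain. There is no real obstacle here; the only point requiring a moment's care is recognizing that the index set of the infimum is allowed to be ${\bf V}^+$ (which may be large), but the co-quantale axiom is stated for arbitrary families $(b_i)_{i\in I}$, so this causes no difficulty.
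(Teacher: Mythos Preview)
Your proposal is correct and follows precisely the chain of equalities that the statement itself lays out; the paper does not supply its own proof here (the result is quoted as a Fact from \cite{Fla97}), so there is nothing further to compare against. Each of your three steps---the monoid identity, the instance $0=\bigwedge\{b:0\prec b\}$ of Definition~\ref{aproximaci\'on}, and the distributivity axiom of Definition~\ref{cuantal}~(2)---is exactly what is needed and in the right order.
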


\begin{lem}(\cite{Fla97}; Thrm 2.11])
If ${\bf V}$ is a value co-quantale and $p,q\in{\bf V}$ such that $q\prec p$, then there exist $r,\epsilon\in{\bf V}$ such that $0\prec\epsilon$, $q\prec r$ and $r+\epsilon\prec p$
\end{lem}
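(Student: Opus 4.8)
The plan is to isolate one small claim and then assemble the statement from it together with the density of $\prec$. The claim is: \emph{for all $x,y\in{\bf V}$, if $x\prec y$ then there is $\epsilon\in{\bf V}^+$ with $x+\epsilon\prec y$.} Once this is available, the lemma is immediate. Starting from $q\prec p$, I would first apply Lemma~\ref{densidad} (legitimate since a value co-quantale is, in particular, a completely distributive lattice) to obtain some $r$ with $q\prec r$ and $r\prec p$. Then, applying the claim to $r\prec p$, I get $\epsilon\in{\bf V}^+$ --- equivalently $0\prec\epsilon$ --- with $r+\epsilon\prec p$. This is exactly the desired conclusion.

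To prove the claim I would invoke Fact~\ref{descender con la suma}, which gives $x=\bigwedge\{x+\epsilon:\epsilon\in{\bf V}^+\}$; note that ${\bf V}^+$ is nonempty since $0\prec1$, so this meet is over a nonempty family. From $x\prec y$ we then have $\bigwedge\{x+\epsilon:\epsilon\in{\bf V}^+\}\prec y$, and Lemma~\ref{continuidad} --- which says that a meet is co-well below $y$ if and only if one of its terms already is --- produces the required $\epsilon\in{\bf V}^+$ with $x+\epsilon\prec y$.

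There is no serious computation in this argument; the only conceptual point to be careful about is why the intermediate element $r$ is genuinely needed rather than simply taking $r=q$. The reason is that $\prec$ need not be irreflexive (it is reflexive at the top element, and even at $0$ in examples such as ${\bf 2}$), so $q\prec q$ can fail; applying the claim directly to $q\prec p$ would only yield $q+\epsilon\prec p$, leaving the requirement $q\prec r$ unmet. Interpolating with Lemma~\ref{densidad} is what ``makes room'' and supplies an $r$ with $q\prec r$ and $r\prec p$ simultaneously. So I expect the density step to be the one load-bearing idea, with everything else being a direct reading of Fact~\ref{descender con la suma} and Lemma~\ref{continuidad}.
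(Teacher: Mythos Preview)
Your argument is correct. The paper does not supply its own proof of this lemma; it merely records the statement with a citation to Flagg's original (Theorem~2.11 in \cite{Fla97}), so there is nothing in the paper to compare against beyond confirming that the ingredients you invoke---Lemma~\ref{densidad}, Fact~\ref{descender con la suma}, and Lemma~\ref{continuidad}---are exactly the ones available at that point and are used as stated. Your assembly of them is sound: density produces the intermediate $r$ with $q\prec r\prec p$, Fact~\ref{descender con la suma} rewrites $r$ as $\bigwedge\{r+\epsilon:0\prec\epsilon\}$, and Lemma~\ref{continuidad} then extracts a single $\epsilon\in{\bf V}^+$ with $r+\epsilon\prec p$. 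Your remark on why one cannot simply take $r=q$ is also to the point.
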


\begin{Ej}
The 2-valued Boolen algebra ${\bf 2}=\{0,1\}$ is a value co-quantale, by taking $+:=\vee$
\end{Ej}

\begin{Ej}
$([0,\infty],\le,+)$ is a value co-quantale. Notice that the operation $-\dotdiv a$  ($a\in[0,\infty]$) is given by $b\dotdiv a:=max\{0,b-a\}$. We denote this example by $\mathcal{D}$.
\end{Ej}

\begin{Ej}
There are several ways to define a value co-quantale with underlying value lattice $\mathcal{E}$. One of them  is by taking $+$ as the usual real product, another one consists by taking $+:=\vee$.  Also, the {\L ukasiewicz's addition $+_L$} provides a value co-quantale structure with underlying lattice $\mathcal{E}$ -where $a+_Lb=\wedge\{0, a+b-1\}$ by taking $\wedge$ in $([0,1],\ge)$-. Denote the previous examples by $\mathcal{E}_*$, $\mathcal{E}_{\vee}$ and $\mathcal{E}_L$ respectively.
\end{Ej}

\begin{hec}(\cite{FlaKop97}; pg 115-117)
Given a non empty set $R$, $(\Omega(R),\supseteq,\cap)$ is a value co-quantale. In particular, if $(X,\tau)$ is a topological space, $(\Omega(\tau),\supseteq,\cap)$ is a value co-quantale. This last example is called the {\bf free local} associate to $(X,\tau)$.
\end{hec}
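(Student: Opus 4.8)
The plan is to translate everything into concrete set language and reduce the statement to elementary facts about unions and intersections, together with one combinatorial description of the co-well-below relation $\prec$ on $\Omega(R)$.

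First I would record the order dictionary. Recall that $\Omega(R)$ consists of the down-closed subsets $p$ of $\mathcal{P}_{fin}(R)$ (meaning $X\in p$ implies $\downarrow(X)\subseteq p$), ordered by $\supseteq$. Since an arbitrary union or intersection of down-closed sets is again down-closed, $\Omega(R)$ is closed under both operations; hence it is a complete lattice in which, for a family $(p_i)_{i\in I}$, one has $\bigwedge_i p_i=\bigcup_i p_i$ and $\bigvee_i p_i=\bigcap_i p_i$, the minimum is $0=\mathcal{P}_{fin}(R)$ and the maximum is $1=\emptyset$. Taking $+:=\cap$, the co-quantale axioms of Definition~\ref{cuantal} become immediate: $(\Omega(R),\cap)$ is a commutative monoid whose identity is the universal set $\mathcal{P}_{fin}(R)=0$, which is axiom (1), and axiom (2) reads $a\cap\bigcup_i b_i=\bigcup_i(a\cap b_i)$, the infinite distributive law for sets. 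So $(\Omega(R),\supseteq,\cap)$ is a co-quantale, and it remains to show the underlying lattice is a value lattice.

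The heart of the argument is a description of $\prec$ in this lattice: for $p,q\in\Omega(R)$,
\[
p\prec q\quad\Longleftrightarrow\quad\text{there is a finite }X\subseteq R\text{ with }X\in p\text{ and }q\subseteq\downarrow(X).
\]
For the forward direction I would apply the definition of $\prec$ to the family $A:=\{\downarrow(X):X\in p\}\subseteq\Omega(R)$: since $p$ is down-closed, $\bigcup A=p$, so $\bigwedge A\leq p$, and therefore some member $\downarrow(X)$ of $A$, with $X\in p$ finite, satisfies $\downarrow(X)\supseteq q$, i.e. $q\subseteq\downarrow(X)$. For the converse, suppose such an $X$ is given and let $A\subseteq\Omega(R)$ satisfy $\bigwedge A\leq p$, equivalently $p\subseteq\bigcup A$; then $X\in\bigcup A$, so $X\in a$ for some $a\in A$, and down-closedness of $a$ forces $\downarrow(X)\subseteq a$, whence $q\subseteq a$, i.e. $a\leq q$.

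From this the remaining conditions are short. Complete distributivity, i.e. $p=\bigwedge\{q:p\prec q\}=\bigcup\{q:p\prec q\}$: the inclusion $\bigcup\{q:p\prec q\}\subseteq p$ holds since each such $q$ sits inside $\downarrow(X)\subseteq p$ for a suitable $X\in p$, and conversely every $Z\in p$ belongs to $q:=\downarrow(Z)$, which satisfies $p\prec\downarrow(Z)$ by the description (take $X=Z$). For the two value-lattice clauses of Definition~\ref{propiedadefiltro}: $0\prec 1$ because $\emptyset\in\mathcal{P}_{fin}(R)=0$ is finite and $1=\emptyset\subseteq\downarrow(\emptyset)$; and if $0\prec\delta$ and $0\prec\delta'$, witnessed by finite $X,X'\subseteq R$ with $\delta\subseteq\downarrow(X)$ and $\delta'\subseteq\downarrow(X')$, then $\delta\wedge\delta'=\delta\cup\delta'\subseteq\downarrow(X)\cup\downarrow(X')\subseteq\downarrow(X\cup X')$ with $X\cup X'$ finite, so $0\prec\delta\wedge\delta'$. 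Thus $\Omega(R)$ is a value lattice and hence a value co-quantale. Finally, for any topological space $(X,\tau)$ the set $\tau$ is non-empty, so the general result applies with $R=\tau$ and produces the free locale $(\Omega(\tau),\supseteq,\cap)$. The only steps needing genuine care are keeping the $\supseteq$-order conventions straight (meets are unions, suprema are intersections) and the computation of $\prec$; the rest is bookkeeping. Alternatively one could invoke the classical fact that every complete ring of sets is completely distributive, but the explicit form of $\prec$ is needed for the value-lattice clauses in any case.
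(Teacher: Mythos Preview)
Your proof is correct. The paper states this result as a \emph{Fact} with a citation to \cite{FlaKop97} and gives no proof of its own, so there is nothing to compare against; your argument supplies exactly the verification that the paper defers to the reference, and the key step---the explicit characterization of $p\prec q$ in $\Omega(R)$ via a single finite witness $X\in p$ with $q\subseteq\downarrow(X)$---is the standard one and is carried out cleanly.
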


\begin{defi}\label{Cuantal Co-divisible}
A co-quantale $({\bf V},\leq,+)$ is said to be {\bf co-divisible} if for all $a,b\in {\bf V}$, $a\leq b$ implies that there exists $c\in {\bf V}$ such that $b=a+c$. 
\end{defi}

\begin{lem}\label{caracterizaci\'on de co-divisibilidad}
A co-quantale $({\bf V},\leq,+)$ is co-divisible,if and only, if for all $a,b\in {\bf V}$, $a\leq b$ implies $b=a+(b\dotdiv a)$.
\end{lem}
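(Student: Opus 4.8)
The plan is to prove both implications directly from the adjunction properties of $\dotdiv$ recorded in Proposition~\ref{adjunci\'on} and the monotonicity of $+$ in Proposition~\ref{monotonia de la suma}. The key observation is that the inequality $b\leq a+(b\dotdiv a)$ holds in \emph{any} co-quantale, with no divisibility assumption: it is exactly Proposition~\ref{adjunci\'on} (2) applied with the pair $(b,a)$ in place of $(a,b)$. So the whole content of the lemma is the reverse inequality $a+(b\dotdiv a)\leq b$, and this is where co-divisibility enters.

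For the direction ($\Leftarrow$), this is immediate: if $a\leq b$ implies $b=a+(b\dotdiv a)$, then the element $c:=b\dotdiv a$ witnesses co-divisibility, since $b=a+c$.

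For the direction ($\Rightarrow$), assume ${\bf V}$ is co-divisible and fix $a\leq b$. First I would record $b\leq a+(b\dotdiv a)$ as noted above, using commutativity of $+$. Then, by co-divisibility, pick $c\in{\bf V}$ with $b=a+c$. By Proposition~\ref{adjunci\'on} (3) we get $b\dotdiv a=(a+c)\dotdiv a\leq c$, and then Proposition~\ref{monotonia de la suma} (2) gives $a+(b\dotdiv a)\leq a+c=b$. Combining the two inequalities yields $b=a+(b\dotdiv a)$.

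I do not expect any real obstacle here; the only point requiring a little care is to apply part (2) of Proposition~\ref{adjunci\'on} with the arguments in the correct order (and to invoke commutativity of $+$ to rewrite $(b\dotdiv a)+a$ as $a+(b\dotdiv a)$), together with keeping track that the witness $c$ from co-divisibility need not equal $b\dotdiv a$ a priori — it is the squeeze between the two inequalities that forces the identity.
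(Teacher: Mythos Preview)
Your argument is correct and is essentially the same approach as the paper's: the paper's proof is the single line ``This follows from $-\dotdiv a\dashv a+-$,'' and your proposal is precisely the explicit unpacking of that adjunction via Proposition~\ref{adjunci\'on} (2), (3) and Proposition~\ref{monotonia de la suma} (2). Nothing is missing; your version simply spells out what the paper leaves implicit.
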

\begin{proof}
 This follow from $-\dotdiv a\dashv a+-$.
\end{proof}

The following property allows us to approximate $0$ by means of a $\mathbb{N}$-indexed sequence. 

\begin{defi}(\cite{LiRoZa18})\label{propiedad SAFA}
Given a co-quantale ${\bf V}$, we say that it has the {\bf Sequential Approximation From Above property} (shortly, $SAFA$), if and only, if there is a sequence $(u_n)_{n\in\mathbb{N}}$ such that
\begin{enumerate}
    \item $\bigwedge_{n\in\mathbb{N}}u_n=0.$
    \item for all $n\in\mathbb{N}$, $0\prec u_n$
    \item for all $n\in\mathbb{N}$, $u_{n+1}\leq u_n$
\end{enumerate}
\end{defi}

\subsubsection{Co-Girard value co-quantales.}
In this section, we give the basic notions and results relative to co-Girard value co-quantales, which allows to consider a kind of pseudo complement relative to some fixed element $b$ (a dualizing element) and $\dotdiv$. This notion will allow to prove our test questions (Proposition~\ref{test de Tarski-Vaught} and Theorem~\ref{Teorema de Los}).

\begin{defi}
Given a co-quantale $({\bf V}, +)$, an element $d\in V$ is said to be a {\bf dualizing element}, if and only if, for all $a\in V$ we have that  $a=d\dotdiv(d\dotdiv a)$.
\end{defi}

\begin{defi}\label{cuantal co-Girard}\index{Cuantal!co-Girard}
A co-quantale $ {\bf V}$ is said to be {\bf co-Girard}, if and only, if it has a dualizing element.
\end{defi}


\subsection{Continuity spaces}
In this section, we will provide some basics on {\it continuity spaces}, a framework given in \cite{Fla97,FlaKop97} in order to generalize (pseudo) metric spaces but considering distances with values in general co-quantales. 

\begin{defi}\label{espacios de continuidad}
Given $X\neq \emptyset$ a set, ${\bf V}$ a value co-quantale and a mapping $d:X\times X\rightarrow{\bf V}$, the pair $(X,d)$ is said to be a ${\bf V}$-{\bf continuity space}, if and only if,
\begin{enumerate}
    \item (reflexivity) for all $x\in X$, $d(x,x)=0$, and
    \item (transitivity) for any $x,y,z\in X$, $d(x,y)\leq d(x,z)+d(z,y)$.
\end{enumerate}

If there is no confusion about which co-quantale ${\bf V}$ we are considering, we may say that $(X,d)$ is just a {\bf continuity space}.
\end{defi}

\begin{Ej}\label{pre-orden como espacio}
Let ${\bf V}:=(2,,\le,\vee)$ and $X\neq \emptyset$ be a set. A $2$-continuity space $(X,d)$ codifies a binary relation $R:=\{(x,y) \in X\times X : d(x,y)=0\}$ on $X$ which is reflexive and transitive.
\end{Ej}

\begin{Ej}
Given $\mathcal{D}:=([0,\infty],\le,+)$, a $\mathcal{D}$-continuity space is just a  pseudo metric space (where the distance betwwen two elements might be infinite). 
\end{Ej}


\begin{Ej}\label{ExampleTop}
Given $(X,\tau)$ a topological space and $a,b\in X$, define  $d(a,b):=\{A\subseteq_{finite}\tau : $ for all $ U\in A,\ a\in U $ implies $ b\in U\}$. $(X,d)$ is a $\Omega(\tau)$-continuity space. 
\end{Ej}

\begin{Ej}
Given a value co-quantale ${\bf V}$, define  $d:{\bf V}\times{\bf V}\rightarrow{\bf V}$ by $d(a,b):=b\dotdiv a$. $(V,d)$ is a ${\bf V}$-continuity space.
\end{Ej}
%

\begin{Prop}\label{estructura en el producto} (\cite{FlaKop97}; Pg 119])
Given a value co-quantale {\bf V} and $(X,d_X), (Y,d_Y)$ {\bf V}-continuity spaces, define
\begin{center}
 $d_{X\times Y}: (X\times Y)\times(X\times Y)\rightarrow {\bf V}$  
\end{center} 
by $d_{X\times Y}((x_1,y_1),(x_2,y_2)):=d_X(x_1,x_2)\vee d_Y(y_1,y_2)$, then $(X\times Y, d_{X\times Y})$ is a ${\bf V}$-continuity space.
\end{Prop}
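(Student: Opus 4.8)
The plan is to verify directly the two defining conditions of a $\mathbf{V}$-continuity space from Definition~\ref{espacios de continuidad}, using only that $\mathbf{V}$ is a co-quantale; the only non-trivial ingredient will be monotonicity of $+$ (Proposition~\ref{monotonia de la suma}(2)) combined with commutativity of $+$, so that $+$ is monotone in each argument separately.

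For reflexivity I would simply compute, for $(x,y)\in X\times Y$,
\[
d_{X\times Y}((x,y),(x,y))=d_X(x,x)\vee d_Y(y,y)=0\vee 0=0,
\]
since $0$ is the minimum of $\mathbf{V}$ and both $(X,d_X)$ and $(Y,d_Y)$ are reflexive.

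For transitivity, fix $(x_1,y_1),(x_2,y_2),(x_3,y_3)\in X\times Y$. By transitivity in $X$ we have $d_X(x_1,x_3)\le d_X(x_1,x_2)+d_X(x_2,x_3)$. Since $d_X(x_i,x_j)\le d_X(x_i,x_j)\vee d_Y(y_i,y_j)$, two applications of Proposition~\ref{monotonia de la suma}(2) (once in each argument, using commutativity) yield
\[
d_X(x_1,x_2)+d_X(x_2,x_3)\ \le\ \bigl(d_X(x_1,x_2)\vee d_Y(y_1,y_2)\bigr)+\bigl(d_X(x_2,x_3)\vee d_Y(y_2,y_3)\bigr),
\]
and the right-hand side is exactly $d_{X\times Y}((x_1,y_1),(x_2,y_2))+d_{X\times Y}((x_2,y_2),(x_3,y_3))$. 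Hence $d_X(x_1,x_3)$ is bounded above by this sum, and by the symmetric argument in $Y$ so is $d_Y(y_1,y_3)$. Because $\vee$ is the least upper bound in the lattice $\mathbf{V}$, the join $d_X(x_1,x_3)\vee d_Y(y_1,y_3)=d_{X\times Y}((x_1,y_1),(x_3,y_3))$ is also bounded above by it, which is precisely the triangle inequality for $d_{X\times Y}$.

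I do not expect any genuine obstacle here: the one point requiring a little care is that the two componentwise estimates must be merged through a single common upper bound, which is legitimate exactly because $d_{X\times Y}$ is defined as a finite join and $+$ is monotone in each argument. In particular no completeness or complete distributivity of $\mathbf{V}$ beyond the co-quantale axioms is needed, since only binary joins appear.
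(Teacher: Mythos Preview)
Your proof is correct. The paper does not actually give a proof of this proposition; it is stated with a reference to \cite{FlaKop97} and left without argument. Your direct verification of reflexivity and transitivity via monotonicity of $+$ (Proposition~\ref{monotonia de la suma}(2)) together with commutativity is exactly the standard argument one would expect, and nothing more is needed.
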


\begin{remark}\label{la distancia al 0 en el caso sim\'etrico}
There is another way to provide to a value co-quantale a ${\bf V}$-continuity space structure by defining $d^s_V(a,b):=(a\dotdiv b) \vee (b\dotdiv a)$ for all $a,b\in {\bf V}$. $d^s_V$ is called the {\bf symmetric distance} for $\mathbf{V}$. Notice that for all $a\in{\bf V}$ we have that $d^s_V(a,0)=d^s_V(0,a)=a$:
\begin{eqnarray*}
d^s_V(a,0)&=&(a\dotdiv 0)\vee (0\dotdiv a)\\
&=&(a\dotdiv 0) \vee 0 \ (\text{by Proposition~\ref{adjunci\'on} (4) and } \ 0=\min {\bf V}) \\
 &=& a\dotdiv 0\\
 &=& \bigwedge\{r: r+0 \geq a\} \ (\text{by definition of } \dotdiv )\\
 &=& \bigwedge\{r: r \geq a\} \\
 &=& a 
 \end{eqnarray*}
\end{remark}

\subsubsection{The topology of a $\mathbf{V}$-continuity space.}
In this subsection, we will give some basics on the underlying topology of a $\mathbf{V}$-continuity space.
\begin{nota}
Throughout this subsection, ${\bf V}$ will denote a value co-quantale.
\end{nota}

\begin{defi}\label{Bolas del espacio}
Given a ${\bf V}$-continuity space $(X,d)$, $\epsilon\in{\bf V}^+$ and $x\in X$ define $B_{\epsilon}(x):=\{y\in X : d(x,y)\prec\epsilon\}$ (which we call the {\bf disc} with radius $\epsilon$ centered in $x$.
\end{defi}

\begin{defi}
A subset $U$ of a $\text{V}$-continuity space $(X,d)$ is said to be {\bf open}, if and only if, given $x\in U$ there exists some $\epsilon\in{\bf V}^+$ such that $B_{\epsilon}(x)\subseteq U$.
\end{defi}

\begin{hec}(\cite{Fla97}; Thrm 4.2)
The family of open subsets in a ${\bf V}$-continuity space $(X,d)$ is closed under finite interesections and arbitrary unions. Also, $\emptyset$ and $X$ are open sets.
\end{hec}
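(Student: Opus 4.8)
The plan is to verify the three closure conditions separately; the only substantive point is the behaviour of the discs $B_\epsilon(x)$ (Definition~\ref{Bolas del espacio}) when the radius is shrunk, and everything else is a direct transcription of the classical metric argument.

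First the easy parts. The empty set is open vacuously. For $X$ itself, note that $\mathbf{V}^+\neq\emptyset$: since $\mathbf{V}$ is a value lattice, $0\prec 1$ by Definition~\ref{propiedadefiltro}(1), so $1\in\mathbf{V}^+$; hence for any $x\in X$ the disc $B_1(x)$ is defined and is trivially contained in $X$, so $X$ is open. For arbitrary unions, if $(U_i)_{i\in I}$ are open and $x\in\bigcup_{i\in I}U_i$, I pick $j\in I$ with $x\in U_j$ and then $\epsilon\in\mathbf{V}^+$ with $B_\epsilon(x)\subseteq U_j$; then $B_\epsilon(x)\subseteq\bigcup_{i\in I}U_i$, so the union is open.

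The core of the argument is stability under finite intersections, for which it suffices to treat two open sets $U,W$. Given $x\in U\cap W$, I choose $\epsilon_1,\epsilon_2\in\mathbf{V}^+$ with $B_{\epsilon_1}(x)\subseteq U$ and $B_{\epsilon_2}(x)\subseteq W$. The key observation is that $\epsilon:=\epsilon_1\wedge\epsilon_2$ again lies in $\mathbf{V}^+$: this is exactly the second closure axiom in the definition of a value lattice (Definition~\ref{propiedadefiltro}(2)) applied to $0\prec\epsilon_1$ and $0\prec\epsilon_2$. I then show $B_\epsilon(x)\subseteq B_{\epsilon_1}(x)\cap B_{\epsilon_2}(x)$: if $d(x,y)\prec\epsilon$, then since $\epsilon=\epsilon_1\wedge\epsilon_2\leq\epsilon_1$, Lemma~\ref{propiedades b\'asicas de la relaci\'on prec}(3) gives $d(x,y)\prec\epsilon_1$, and symmetrically $d(x,y)\prec\epsilon_2$. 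Hence $B_\epsilon(x)\subseteq U\cap W$, so $U\cap W$ is open, and an obvious induction extends this to any finite intersection.

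I do not expect a genuine obstacle: the "$\epsilon/2$"-style reasoning of the metric case is replaced here by the two defining properties of $\mathbf{V}^+$ — nonemptiness (from $0\prec 1$) and closure under binary meets — together with the monotonicity of $\prec$ from Lemma~\ref{propiedades b\'asicas de la relaci\'on prec}; the triangle inequality of continuity spaces is not even needed for this particular statement. The only subtlety worth spelling out is the nonemptiness of $\mathbf{V}^+$, since without it "open" would be a vacuous notion and the claim about $X$ would fail.
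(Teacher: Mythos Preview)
Your argument is correct and is exactly the standard verification: nonemptiness of $\mathbf{V}^+$ from $0\prec 1$, closure of $\mathbf{V}^+$ under binary meets from Definition~\ref{propiedadefiltro}(2), and the monotonicity of $\prec$ from Lemma~\ref{propiedades b\'asicas de la relaci\'on prec}(3) to get $B_{\epsilon_1\wedge\epsilon_2}(x)\subseteq B_{\epsilon_1}(x)\cap B_{\epsilon_2}(x)$. Note, however, that the paper does not supply its own proof of this fact---it is stated as a \emph{hec} (Fact) with a bare citation to \cite{Fla97}, Theorem~4.2---so there is no in-paper argument to compare against; your write-up would serve perfectly well as the missing justification.
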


\begin{defi}\label{topolog\'ia inducida}
Given $(X,d)$ a ${\bf V}$-continuity space, the family of open sets of $(X,d)$ determines a topology on $X$, which we will call the {\bf topology induced} by $d$ and we will denote it by $\tau_d$.
\end{defi} 

\begin{lem}
Given $(X,d)$ a ${\bf V}$-continuity space, for all $x\in X$ and $\epsilon\in{\bf V}^+$ we have that the disc $B_{\epsilon}(x)$ is an open set of $X$.
\end{lem}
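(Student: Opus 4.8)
The plan is to verify the defining criterion of openness pointwise: for an arbitrary $y\in B_\epsilon(x)$ I will produce a radius $\delta\in{\bf V}^+$ with $B_\delta(y)\subseteq B_\epsilon(x)$. The working tools are transitivity of $d$ and the transport rules for $\prec$ from Lemma~\ref{propiedades b\'asicas de la relaci\'on prec}; the only subtle step is manufacturing a genuinely positive $\delta$ that still keeps $d(x,y)+\delta$ co-well below $\epsilon$.

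First I would fix $y\in B_\epsilon(x)$, so $d(x,y)\prec\epsilon$, and claim there is $\delta\in{\bf V}^+$ with $d(x,y)+\delta\prec\epsilon$. One cannot in general take $\delta:=\epsilon\dotdiv d(x,y)$, because there is no guarantee this element lies in ${\bf V}^+$ (by Proposition~\ref{adjunci\'on}(4) it is $0$ exactly when $\epsilon\le d(x,y)$, which is compatible with $d(x,y)\prec\epsilon$ in a general value co-quantale). Instead I would invoke the refinement lemma for value co-quantales quoted above (\cite{Fla97}, Thrm~2.11) with its hypothesis ``$q\prec p$'' instantiated at $q:=d(x,y)$ and $p:=\epsilon$: since $d(x,y)\prec\epsilon$, it yields $r\in{\bf V}$ and some $\delta\in{\bf V}^+$ with $d(x,y)\prec r$ and $r+\delta\prec\epsilon$. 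From $d(x,y)\prec r$ and Lemma~\ref{propiedades b\'asicas de la relaci\'on prec}(1) we get $d(x,y)\le r$, hence $d(x,y)+\delta\le r+\delta$ by Proposition~\ref{monotonia de la suma}(2), and then $d(x,y)+\delta\prec\epsilon$ by Lemma~\ref{propiedades b\'asicas de la relaci\'on prec}(2).

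With such a $\delta$ in hand I would check $B_\delta(y)\subseteq B_\epsilon(x)$. Let $z\in B_\delta(y)$, i.e. $d(y,z)\prec\delta$, so $d(y,z)\le\delta$ by Lemma~\ref{propiedades b\'asicas de la relaci\'on prec}(1). Transitivity gives $d(x,z)\le d(x,y)+d(y,z)$, and Proposition~\ref{monotonia de la suma}(2) upgrades the right-hand side to $d(x,y)+\delta$, so $d(x,z)\le d(x,y)+\delta\prec\epsilon$; by Lemma~\ref{propiedades b\'asicas de la relaci\'on prec}(2) this gives $d(x,z)\prec\epsilon$, i.e. $z\in B_\epsilon(x)$. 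Since $y\in B_\epsilon(x)$ was arbitrary, every point of $B_\epsilon(x)$ is interior, hence $B_\epsilon(x)$ is open.

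The main obstacle is precisely the second paragraph. In the classical $[0,\infty]$-valued case the radius one wants is simply $\epsilon-d(x,y)$, but since the order of a value co-quantale need not be linear and $\prec$ is not plain strict inequality, extracting a positive ``gap'' between $d(x,y)$ and $\epsilon$ requires the value-co-quantale refinement lemma rather than a direct computation with $\dotdiv$; once that gap is secured, the rest is routine bookkeeping with transitivity and parts (1)--(2) of Lemma~\ref{propiedades b\'asicas de la relaci\'on prec}.
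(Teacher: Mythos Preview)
Your proof is correct and follows the same overall strategy as the paper: pick $y\in B_\epsilon(x)$, manufacture $\delta\in{\bf V}^+$ with $d(x,y)+\delta\prec\epsilon$, then use transitivity of $d$ and Lemma~\ref{propiedades b\'asicas de la relaci\'on prec} to conclude $B_\delta(y)\subseteq B_\epsilon(x)$.

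The only difference is in how the positive gap $\delta$ is produced. You invoke the refinement lemma (\cite{Fla97}, Thrm~2.11) to obtain an intermediate $r$ with $d(x,y)\prec r$ and $r+\delta\prec\epsilon$, then squeeze. The paper instead writes $d(x,y)=\bigwedge\{d(x,y)+\delta : 0\prec\delta\}$ via Fact~\ref{descender con la suma} and applies Lemma~\ref{continuidad} directly to this infimum to extract some $\delta\in{\bf V}^+$ with $d(x,y)+\delta\prec\epsilon$ in one step. The paper's route is slightly more economical (no intermediate $r$, no appeal to monotonicity of $+$ to pass from $r+\delta$ to $d(x,y)+\delta$), but your version has the virtue of making explicit why the na\"ive guess $\delta:=\epsilon\dotdiv d(x,y)$ can fail, which is a point the paper's proof leaves implicit.
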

\begin{proof}
Let $y\in B_{\epsilon}(x)$, so $d(x,y)\prec\epsilon$. By Fact~\ref{descender con la suma} $d(x,y)=\bigwedge\{d(x,y)+\delta : 0\prec \delta\}\prec\epsilon$, then by Lemma~\ref{continuidad} there exists $\delta\in{\bf V}^+$ such that $d(x,y)+\delta\prec\epsilon$. We may assure that $B_{\delta}(y)\subseteq B_{\epsilon}(x)$: If $z\in X$ satisfies $d(y,z)\prec\delta$, then $d(x,z)\leq d(x,y)+d(y,z)\leq d(x,y)+\delta\prec\epsilon$.
\end{proof}

\begin{hec}\label{base de la topolog\'ia}
Given a ${\bf V}$-continuity space $(X,d)$, the family of open discs forms a base for $\tau_d$.
\end{hec}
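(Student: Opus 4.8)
The statement to prove is Fact~\ref{base de la topolog\'ia}: the family of open discs forms a base for $\tau_d$.

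My plan:

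\textbf{Plan.} To show that the open discs form a base for $\tau_d$, I must verify two things: first, that every open disc is itself open (so that the discs are genuine members of $\tau_d$), and second, that every open set is a union of open discs. The first point has just been established in the preceding lemma, so I would simply cite it. The bulk of the argument is the second point, and here I would argue directly from the definition of ``open'' in a $\mathbf{V}$-continuity space.

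\textbf{Main step.} Let $U \in \tau_d$. By the definition of open set, for each $x \in U$ there exists $\epsilon_x \in \mathbf{V}^+$ with $B_{\epsilon_x}(x) \subseteq U$. Since $x \in B_{\epsilon_x}(x)$ — which holds because $d(x,x) = 0 \prec \epsilon_x$ by reflexivity (Definition~\ref{espacios de continuidad}(1)) together with $0 \prec \epsilon_x$ since $\epsilon_x \in \mathbf{V}^+$ — we get $U = \bigcup_{x \in U} \{x\} \subseteq \bigcup_{x \in U} B_{\epsilon_x}(x) \subseteq U$, hence $U = \bigcup_{x \in U} B_{\epsilon_x}(x)$. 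Thus $U$ is a union of open discs. Combined with the fact that each disc is open, this is exactly the statement that the open discs form a base for $\tau_d$.

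\textbf{Obstacle.} There is essentially no obstacle here: the only non-routine observation is that $x \in B_{\epsilon}(x)$ whenever $\epsilon \in \mathbf{V}^+$, which requires knowing $0 \prec \epsilon$ implies $d(x,x) = 0 \prec \epsilon$; this is immediate from the definition of $\mathbf{V}^+$ and reflexivity. I would also remark that one should check the defining axiom of a base — namely, that a point lying in the intersection of two basic discs lies in a basic disc contained in that intersection — but this follows immediately from the previous lemma (each disc is open) together with the characterization of open sets, so I would either state it in one line or note that it is subsumed by the ``union of discs'' property once we know discs are open.
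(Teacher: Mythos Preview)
Your argument is correct and is the standard one: cite the preceding lemma for ``discs are open,'' then use the very definition of $\tau_d$ to write any open $U$ as $\bigcup_{x\in U} B_{\epsilon_x}(x)$, noting $x\in B_{\epsilon_x}(x)$ since $d(x,x)=0\prec\epsilon_x$. The paper itself states this result as a Fact without proof, so there is nothing to compare against; your write-up is exactly what one would supply to fill the gap.
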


\begin{defi}\label{distancia entre un elemento y un conjunto}
Given a ${\bf V}$-continuity space $(X,d)$, $A\subseteq X$ and $x\in X$, define $d(x,A):=\bigwedge\{d(x,a) : a\in A\}$.
\end{defi}

\begin{Prop}\label{caracterizaci\'on de los cerrados}
Given a ${\bf V}$-continuity space $(X,d)$, then a subset $A\subseteq X$  is $\tau_d$-closed, if and only if, for all $x\in X$ we have that $d(x,A)=0$ implies $x\in A$.
\end{Prop}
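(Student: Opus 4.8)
The plan is to prove both directions of the equivalence by working with the characterization of open sets in terms of discs $B_\epsilon(x)$ together with the positives filter ${\bf V}^+$ and the completely distributive structure of ${\bf V}$.

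First I would prove the contrapositive of the forward direction: suppose $A$ is $\tau_d$-closed and $x\notin A$; I want $d(x,A)\neq 0$. Since $X\setminus A$ is open and contains $x$, there is $\epsilon\in{\bf V}^+$ with $B_\epsilon(x)\subseteq X\setminus A$, i.e.\ no $a\in A$ satisfies $d(x,a)\prec\epsilon$. Now $d(x,A)=\bigwedge\{d(x,a):a\in A\}$, and I claim this meet cannot be $0$. Indeed, if $d(x,A)=0\prec\epsilon$ (using $0\prec\epsilon$ since $\epsilon\in{\bf V}^+$), then by Lemma~\ref{continuidad} the infimum being co-well below $\epsilon$ forces some $a\in A$ with $d(x,a)\prec\epsilon$, contradicting $B_\epsilon(x)\cap A=\emptyset$. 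Hence $d(x,A)\neq 0$, which is the contrapositive of ``$d(x,A)=0$ implies $x\in A$''. (The only subtlety is the case $A=\emptyset$, where $d(x,A)=\bigwedge\emptyset=1\neq 0$ since $0\prec 1$, so the implication holds vacuously-ish; I'd note this.)

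For the converse, assume that for all $x\in X$, $d(x,A)=0$ implies $x\in A$; I must show $X\setminus A$ is open. Take $x\in X\setminus A$, so $d(x,A)\neq 0$ by hypothesis. I want to produce $\epsilon\in{\bf V}^+$ with $B_\epsilon(x)\subseteq X\setminus A$. Here is the key step and the main obstacle: from $d(x,A)\neq 0$ I need a genuinely positive $\epsilon$ (an element of ${\bf V}^+$) that is ``small enough'' relative to $d(x,A)$; merely knowing $d(x,A)\neq 0$ does not immediately hand me such an $\epsilon$ since ${\bf V}$ need not be linearly ordered. The way around this is complete distributivity: since $d(x,A)\neq 0 = \bigwedge\{b : 0\prec b\}$ would be false only if... — more carefully, by complete distributivity $d(x,A)=\bigwedge\{b: d(x,A)\prec b\}$, and since $d(x,A)\neq 0$ this family does not collapse; but what I actually want is an $\epsilon$ with $0\prec\epsilon$ and such that $d(x,a)\prec\epsilon$ fails for all $a\in A$. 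I would argue: suppose for contradiction that for every $\epsilon\in{\bf V}^+$ there is $a_\epsilon\in A$ with $d(x,a_\epsilon)\prec\epsilon$; then in particular $d(x,A)\leq d(x,a_\epsilon)\leq\epsilon$ for every $\epsilon\in{\bf V}^+$ (using Lemma~\ref{propiedades b\'asicas de la relaci\'on prec}(1)), so $d(x,A)\leq\bigwedge\{\epsilon:0\prec\epsilon\}=0$ by the definition of value lattice and the characterization in Fact~\ref{descender con la suma} (applied with $p=0$), giving $d(x,A)=0$, a contradiction. Hence some $\epsilon\in{\bf V}^+$ works, and for that $\epsilon$ no $a\in A$ lies in $B_\epsilon(x)$, i.e.\ $B_\epsilon(x)\subseteq X\setminus A$, so $X\setminus A$ is open and $A$ is closed.

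In summary, the forward direction is a direct unwinding of openness of the complement plus Lemma~\ref{continuidad}, while the converse rests on the fact that in a value co-quantale $0$ is the meet of the positives filter (Fact~\ref{descender con la suma}), which lets me convert ``$d(x,A)$ is not $0$'' into ``some genuinely positive $\epsilon$ is not above $d(x,A)$, hence witnesses separation.'' I expect the bookkeeping around $A=\emptyset$ and the non-linearity of ${\bf V}$ — i.e.\ making sure I never implicitly assume comparability — to be the only real care-points; no hard lemmas beyond those already in the excerpt are needed.
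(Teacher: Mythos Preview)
Your proof is correct and follows essentially the same approach as the paper's: the forward direction is identical (openness of the complement gives a disc missing $A$, and Lemma~\ref{continuidad} rules out $d(x,A)=0$), and your converse is the contrapositive of the paper's argument (the paper takes $y$ in the adherence of $A$ and shows $d(y,A)\le\bigwedge\mathbf{V}^+=0$, while you take $x\notin A$ and argue that if every disc met $A$ then $d(x,A)=0$). The only cosmetic difference is your explicit handling of $A=\emptyset$, which the paper leaves implicit.
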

\begin{proof}
Suppose that $A\subseteq X$ is $\tau_d$-closed and let $x\in X$ be such that  $d(x,A)=0$. In case that $x\notin A$, since $A$ is $\tau_d$-closed, there exists  $\epsilon\in{\bf V}^+$ such that $B_{\epsilon}(x)\subseteq A^c=\{y\in X : y\notin A \}$. Since $d(x,A):=\bigwedge\{d(x,a) : a\in A\}=0\prec\epsilon$, by Fact~\ref{continuidad} there exists $a\in A$ such that $d(x,a)\prec \varepsilon$, so $a\in B_{\epsilon}(x)\cap A$ (contradiction).\\
On the other hand, let $A\subseteq X$ be such that for all $x\in X$, $d(x,A)=0$ implies $x\in A$. Suppose that $y\in X$ belongs to the adherence of $A$, so by Fact~\ref{base de la topolog\'ia} given any $\epsilon\in{\bf V}^+$, we have that $A\cap B_{\epsilon}(y)\neq\emptyset$. Therefore, for any $\varepsilon\in V^+$ we have that  $d(y,A):=\bigwedge\{d(y,a) : a\in A\}\leq\varepsilon$, so $d(y,A)\le \bigwedge\{\varepsilon: 0\prec \varepsilon\}=0$, hence by hypothesis we may say that $y\in A$. Therefore, $A$ is closed.
\end{proof}

\begin{Coro}
Given a ${\bf V}$-continuity space $(X,d)$, the topological closure of $A\subseteq X$ is given by $cl(A):=\overline{A}=\{y\in X : d(y,A)=0\}$.
\end{Coro}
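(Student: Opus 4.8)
The plan is to identify the closure with the set $B:=\{y\in X : d(y,A)=0\}$ directly, bypassing the list of closed supersets and instead translating the neighbourhood description of adherent points into a statement about the value $d(y,A)=\bigwedge\{d(y,a):a\in A\}$ (Definition~\ref{distancia entre un elemento y un conjunto}). Recall from Fact~\ref{base de la topolog\'ia} that the discs $B_\epsilon(x)$ with $\epsilon\in{\bf V}^+$ form a base for $\tau_d$; since moreover $y\in B_\epsilon(y)$ for every $\epsilon\in{\bf V}^+$ (as $d(y,y)=0\prec\epsilon$), we have $y\in\overline{A}$ if and only if $B_\epsilon(y)\cap A\neq\emptyset$ for all $\epsilon\in{\bf V}^+$. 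I would then prove the two inclusions $\overline{A}\subseteq B$ and $B\subseteq\overline{A}$ separately.

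For $\overline{A}\subseteq B$: fix $y\in\overline{A}$ and an arbitrary $\epsilon\in{\bf V}^+$. Then $B_\epsilon(y)\cap A\neq\emptyset$, so there is $a\in A$ with $d(y,a)\prec\epsilon$; hence $d(y,A)\le d(y,a)\le\epsilon$, the second inequality by Lemma~\ref{propiedades b\'asicas de la relaci\'on prec}(1). As $\epsilon\in{\bf V}^+$ was arbitrary, $d(y,A)\le\bigwedge\{\epsilon:0\prec\epsilon\}=0$, where the last equality is Definition~\ref{aproximaci\'on} applied at $0$. Thus $d(y,A)=0$, i.e.\ $y\in B$.

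For $B\subseteq\overline{A}$: fix $y\in B$, so $d(y,A)=0$, and let $\epsilon\in{\bf V}^+$. Then $\bigwedge\{d(y,a):a\in A\}=0\prec\epsilon$, so by Lemma~\ref{continuidad} there is $a\in A$ with $d(y,a)\prec\epsilon$, that is $a\in B_\epsilon(y)\cap A$. Since $\epsilon\in{\bf V}^+$ was arbitrary, every basic neighbourhood of $y$ meets $A$, so $y\in\overline{A}$ by Fact~\ref{base de la topolog\'ia}.

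I do not expect a genuine obstacle: the statement is essentially a repackaging of Proposition~\ref{caracterizaci\'on de los cerrados}, and indeed one could alternatively argue that $B$ is the least $\tau_d$-closed set containing $A$ — $A\subseteq B$ because $d(a,A)\le d(a,a)=0$; $B$ is closed by Proposition~\ref{caracterizaci\'on de los cerrados}, passing from $d(x,B)=0$ to $d(x,A)=0$ via the triangle inequality and the $\epsilon$-halves argument of Lemma~\ref{argumento epsilon medios}; and any $\tau_d$-closed $C\supseteq A$ contains $B$ since $d(y,C)\le d(y,A)$ by monotonicity of $\bigwedge$ over a larger index set. The only point deserving mild care in either route is the bookkeeping between $\prec$ and $\le$ through Lemma~\ref{propiedades b\'asicas de la relaci\'on prec}, together with the identity $\bigwedge\{\epsilon:0\prec\epsilon\}=0$.
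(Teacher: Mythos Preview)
Your argument is correct. The paper gives no explicit proof of this Corollary, treating it as an immediate consequence of Proposition~\ref{caracterizaci\'on de los cerrados}; your two inclusions are precisely the computations already carried out inside that Proposition's proof (the ``$\Leftarrow$'' paragraph there is your $\overline{A}\subseteq B$, and the use of Lemma~\ref{continuidad} in the ``$\Rightarrow$'' paragraph is your $B\subseteq\overline{A}$), so the approaches coincide.
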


\begin{defi} 
Given  a ${\bf V}$-continuity space $(X,d_X)$, $\epsilon\in{\bf V}^+$ and $x\in X$, define the {\bf closed disc} of radius $\varepsilon$ centered in $x$ by $C_\epsilon(x):=\{y\in X : d_X(x,y)\leq\epsilon\}$.
\end{defi}

\begin{hec}\label{SisFundeVecindades}(\cite{FlaKop97}; Lemma 3.2 (2))
Let $(X,d_X)$ be a ${\bf V}$-continuity space and $x\in X$. The family $\{ C_\epsilon(x): \epsilon\in{\bf V}^+\}$ determines a  fundamental system of neighborhoods around $x$.
\end{hec}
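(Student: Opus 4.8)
The plan is to verify directly the two defining conditions of a fundamental system of neighborhoods at $x$: (a) each $C_\epsilon(x)$ with $\epsilon\in\mathbf{V}^+$ is a neighborhood of $x$ in $\tau_{d_X}$, and (b) every $\tau_{d_X}$-open set containing $x$ includes some $C_\epsilon(x)$ with $\epsilon\in\mathbf{V}^+$.

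For (a), I would first note that $x\in C_\epsilon(x)$, since $d_X(x,x)=0\le\epsilon$ by reflexivity. Next I would observe the inclusion $B_\epsilon(x)\subseteq C_\epsilon(x)$: if $d_X(x,y)\prec\epsilon$ then $d_X(x,y)\le\epsilon$ by Lemma~\ref{propiedades b\'asicas de la relaci\'on prec}(1). Since $\epsilon\in\mathbf{V}^+$ means $0\prec\epsilon$, the disc $B_\epsilon(x)$ is a $\tau_{d_X}$-open set (by the unlabelled lemma showing every disc is open) with $x\in B_\epsilon(x)\subseteq C_\epsilon(x)$; hence $C_\epsilon(x)$ is a neighborhood of $x$.

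For (b), let $U\in\tau_{d_X}$ with $x\in U$; by the definition of open set there is $\epsilon\in\mathbf{V}^+$ with $B_\epsilon(x)\subseteq U$. From $0\prec\epsilon$, complete distributivity of $\mathbf{V}$ — in the form of the interpolation Lemma~\ref{densidad} — yields $\delta\in\mathbf{V}$ with $0\prec\delta\prec\epsilon$, so $\delta\in\mathbf{V}^+$. Then $C_\delta(x)\subseteq B_\epsilon(x)$: if $d_X(x,y)\le\delta$, then $\delta\prec\epsilon$ together with Lemma~\ref{propiedades b\'asicas de la relaci\'on prec}(2) gives $d_X(x,y)\prec\epsilon$, i.e. $y\in B_\epsilon(x)$. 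Hence $C_\delta(x)\subseteq B_\epsilon(x)\subseteq U$. (Alternatively, one could invoke Fact~\ref{base de la topolog\'ia} to reduce (b) at once to the case $U=B_\epsilon(x)$.) If one additionally wants the family to be a filter base, the value-lattice axioms in Definition~\ref{propiedadefiltro} give $0\prec\delta\wedge\delta'$ whenever $0\prec\delta$ and $0\prec\delta'$, and clearly $C_{\delta\wedge\delta'}(x)\subseteq C_\delta(x)\cap C_{\delta'}(x)$, so the system is downward directed.

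I do not expect a real obstacle here: the only nontrivial ingredient is producing an element strictly $\prec$-between $0$ and $\epsilon$, which is precisely what Lemma~\ref{densidad} (complete distributivity) provides; the remaining steps are routine applications of the monotonicity clauses of Lemma~\ref{propiedades b\'asicas de la relaci\'on prec} together with reflexivity and transitivity of $d_X$.
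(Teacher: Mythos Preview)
Your argument is correct. Note, however, that the paper does not give its own proof of this statement: it is recorded as a Fact with a bare citation to \cite{FlaKop97}, Lemma~3.2(2). So there is nothing to compare against in the paper itself; your direct verification of the two defining conditions of a neighborhood base, using Lemma~\ref{densidad} to interpolate a $\delta$ with $0\prec\delta\prec\epsilon$ and Lemma~\ref{propiedades b\'asicas de la relaci\'on prec} for the monotonicity of $\prec$, is exactly the natural way to supply the missing details.
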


\begin{defi}\label{distancia dual}
Given a ${\bf V}$-continuity space $(X,d)$, define the {\bf dual distance} $d^{\star}:X\times X\rightarrow{\bf V}$ relative to $d$ by $d^\star(x,y):=d(y,x)$. In general,  if we add $\star$ as a superscript to any topological notion, it means that it is related to the distance $d^{\star}$; e.g., the topology induced by $d^\star$ is denoted by $\tau_d^\star$.
\end{defi}

\begin{Prop}\label{Duales son cerrados}
Given a ${\bf V}$-continuity space $(X,d)$, $x\in X$  and $\epsilon\in{\bf V}^+$, $C^{\star}_{\epsilon}(x)$ is $\tau_d$-closed.
\end{Prop}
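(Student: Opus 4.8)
The plan is to invoke the closedness criterion of Proposition~\ref{caracterizaci\'on de los cerrados}: a subset $A\subseteq X$ is $\tau_d$-closed precisely when $d(z,A)=0$ forces $z\in A$ for every $z\in X$. So I set $A:=C^{\star}_{\epsilon}(x)$ and, unwinding Definition~\ref{distancia dual}, note that $A=\{y\in X : d(y,x)\le\epsilon\}$. Fix $z\in X$ with $d(z,A)=\bigwedge\{d(z,a):a\in A\}=0$; the goal is to prove $d(z,x)\le\epsilon$, i.e. $z\in A$.

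First I would observe that $A\neq\emptyset$: since $d(x,x)=0$ and $0=\min{\bf V}$, we have $d(x,x)\le\epsilon$, so $x\in A$. This guarantees that the infimum defining $d(z,A)$ is taken over a nonempty index set. Next, for an arbitrary $a\in A$ the transitivity axiom of a ${\bf V}$-continuity space gives $d(z,x)\le d(z,a)+d(a,x)$, and since $a\in A$ means $d(a,x)\le\epsilon$, Proposition~\ref{monotonia de la suma}(2) yields $d(z,x)\le d(z,a)+\epsilon$. As this holds for every $a\in A$, taking the infimum over $a\in A$ and using the distributivity axiom of a co-quantale (Definition~\ref{cuantal}(2)) I obtain
\[
d(z,x)\le\bigwedge_{a\in A}\bigl(d(z,a)+\epsilon\bigr)=\Bigl(\bigwedge_{a\in A}d(z,a)\Bigr)+\epsilon=d(z,A)+\epsilon=0+\epsilon=\epsilon.
\]
Hence $z\in A=C^{\star}_{\epsilon}(x)$, and by Proposition~\ref{caracterizaci\'on de los cerrados} the set $C^{\star}_{\epsilon}(x)$ is $\tau_d$-closed.

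There is essentially no deep obstacle here; the only points that need care are bookkeeping ones. One must remember that the ambient topology is $\tau_d$ (not $\tau_d^{\star}$) while the closed disc is measured with $d^{\star}$, so the two occurrences of $d$ are swapped when passing from membership in $C^{\star}_{\epsilon}(x)$ to an application of transitivity of $d$. One must also be sure that the infimum–sum interchange used above is exactly the co-quantale distributivity of Definition~\ref{cuantal}(2), rather than a property that would require co-divisibility or the value-lattice axioms; in particular no completeness, co-Girard, or $SAFA$ hypothesis is invoked.
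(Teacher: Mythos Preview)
Your proof is correct and follows the same overall strategy as the paper: invoke the closedness criterion of Proposition~\ref{caracterizaci\'on de los cerrados}, then use transitivity of $d$ together with the bound $d(a,x)\le\epsilon$ for $a\in C^\star_\epsilon(x)$ to conclude $d(z,x)\le\epsilon$.

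The only difference is in how the infimum is handled. The paper argues pointwise in $\delta\in{\bf V}^+$: from $d(y,A)=0\prec\delta$ it extracts (via Lemma~\ref{continuidad}) some $a\in A$ with $d(y,a)\prec\delta$, obtains $d(y,x)\le\epsilon+\delta$, and then lets $\delta$ range over ${\bf V}^+$ using Fact~\ref{descender con la suma}. You instead apply the co-quantale distributivity axiom $\bigwedge_a(d(z,a)+\epsilon)=(\bigwedge_a d(z,a))+\epsilon$ directly, which is shorter and avoids any appeal to the value-lattice structure or the relation $\prec$. Both are valid; your route makes clearer that only the bare co-quantale axioms are needed for this step.
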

\begin{proof}
Let $x\in X$ and $\epsilon\in{\bf V}^+$, so by Proposition~\ref{caracterizaci\'on de los cerrados} it is enough to check that for any $y\in X$,  $d(y,C^{\star}_{\epsilon}(x)):=\bigwedge\{d(y,a): a\in C^{\star}_{\epsilon}(x)\}=0$ implies that $y\in C^{\star}_{\epsilon}(x)$. Let $y\in X$ be such that $d(y,C^{\star}_{\epsilon}(x))=0$ and $\delta\in{\bf V}^+$. Since $d(y,C^{\star}_{\epsilon}(x))=0\prec \delta$, by Fact~\ref{continuidad} there exists $z\in C^{\star}_{\epsilon}(x)$ such that $d(y,z)\prec \delta$, so $d^{\star}(x,y):=d(y,x)\leq d(y,z)+d(z,x)=d(y,z)+d^{\star}(x,z)\leq \delta +\epsilon$, therefore $d^{\star}(x,y)\leq\bigwedge\{\epsilon+\delta : 0\prec\delta\}=\epsilon+\bigwedge\{\delta : 0\prec\delta\}=\epsilon + 0=\epsilon$, therefore $y\in C^{\star}_{\epsilon}(x)$.
\end{proof}

\begin{defi}\label{distancia sim\'etrica}
Given a ${\bf V}$-continuity space  $(X,d)$, define the {\bf symmetric space} relative to $(X,d)$ by $(X,d^s)$, where $d^s(x,y):=d(x,y)\vee d^\star(x,y)$. In general, we will denote the topological notions related to $d^s$ by adding the superscript $s$.
\end{defi}

\begin{Prop}(\cite{FlaKop97}; Lemma 3)
Given a ${\bf V}$-continuity space $(X,d)$, for the topology $\tau^s$ induced by $d^s$ we have that  $U\subseteq X$ belongs to $\tau^s$, if and only if, there exist $V,W\subseteq X$ such that $V\in\tau_d$, $W\in\tau^{\star}$ and $U=V\cap W$.
\end{Prop}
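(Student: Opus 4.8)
The plan is to prove the two implications separately, handling the easy ``if'' first and concentrating on the ``only if''.

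For the \emph{sufficiency} direction, suppose $U=V\cap W$ with $V\in\tau_d$ and $W\in\tau^\star$. The point is that $\tau^s$ refines both one-sided topologies. Since $d(x,y)\le d^s(x,y)$ and $d^\star(x,y)\le d^s(x,y)$ for all $x,y$ (because $d^s=d\vee d^\star$), Lemma~\ref{propiedades b\'asicas de la relaci\'on prec}(2) yields $B^s_\epsilon(x)\subseteq B_\epsilon(x)$ and $B^s_\epsilon(x)\subseteq B^\star_\epsilon(x)$ for every $x\in X$ and $\epsilon\in{\bf V}^+$. Hence $\tau_d\cup\tau^\star\subseteq\tau^s$, and as $\tau^s$ is closed under finite intersections we get $V\cap W\in\tau^s$. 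This direction is routine.

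For the \emph{necessity} direction I would first isolate a local factorization. Unlike $\prec$, the order $\le$ distributes over $\vee$, so $d^s(x,y)\le\epsilon$ holds iff $d(x,y)\le\epsilon$ and $d^\star(x,y)\le\epsilon$; equivalently \[ C^s_\epsilon(x)=C_\epsilon(x)\cap C^\star_\epsilon(x). \] Now fix $U\in\tau^s$ and $x\in U$, choose $\epsilon\in{\bf V}^+$ with $B^s_\epsilon(x)\subseteq U$, and use Lemma~\ref{densidad} on $0\prec\epsilon$ to pick $\delta\in{\bf V}^+$ with $\delta\prec\epsilon$. Then $C^s_\delta(x)\subseteq B^s_\epsilon(x)\subseteq U$, since $d^s(x,y)\le\delta\prec\epsilon$ forces $d^s(x,y)\prec\epsilon$ by Lemma~\ref{propiedades b\'asicas de la relaci\'on prec}(2). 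Writing $V_x:=\operatorname{int}_{\tau_d}C_\delta(x)$ and $W_x:=\operatorname{int}_{\tau^\star}C^\star_\delta(x)$, we have $x\in B_\delta(x)\subseteq V_x$ and $x\in B^\star_\delta(x)\subseteq W_x$ (as $d(x,y)\prec\delta$ implies $d(x,y)\le\delta$ by Lemma~\ref{propiedades b\'asicas de la relaci\'on prec}(1), and similarly for $d^\star$), while $V_x\cap W_x\subseteq C_\delta(x)\cap C^\star_\delta(x)=C^s_\delta(x)\subseteq U$. Fact~\ref{SisFundeVecindades} is what guarantees these closed discs are genuine neighborhoods.

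Setting $V:=\bigcup_{x\in U}V_x\in\tau_d$ and $W:=\bigcup_{x\in U}W_x\in\tau^\star$ gives $U\subseteq V\cap W$ at once. \textbf{The main obstacle}, and the heart of the statement, is the reverse inclusion $V\cap W\subseteq U$: a point $z\in V\cap W$ belongs to $V_x$ for some $x\in U$ and to $W_{x'}$ for a possibly different $x'\in U$, and such a ``cross'' point is not controlled by the local estimates above. I would attack this by trying to merge the two centres through transitivity---bounding $d^s(x,z)$ or $d^s(x',z)$ using $d(x,z)$, $d^\star(x',z)$ and $d(x,x')$---but this is exactly the delicate point: the construction as it stands only shows that the sets $V_x\cap W_x$ form a base for $\tau^s$, i.e. that every $\tau^s$-open set is a \emph{union} of such intersections. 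Promoting this to a single intersection $U=V\cap W$ is where the real work lies, and I would scrutinise whether it genuinely holds in this generality or requires an additional hypothesis on ${\bf V}$ or on the specialization order of $(X,d)$.
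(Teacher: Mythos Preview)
The paper does not supply its own proof of this proposition; it merely cites \cite{FlaKop97}. So there is nothing in the paper to compare your argument against directly.

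That said, your analysis is sharp and your hesitation is fully justified. Your treatment of the ``if'' direction is correct, and your local argument for ``only if'' correctly establishes that the sets $V\cap W$ with $V\in\tau_d$, $W\in\tau^\star$ form a \emph{base} for $\tau^s$. The obstacle you flag---that a point of $V\cap W$ may lie in $V_x$ and $W_{x'}$ for different centres $x,x'$---is not a mere technicality to be overcome: the global claim $U=V\cap W$ is in fact \emph{false} as stated. Take ${\bf V}=[0,\infty]$ and $X=\mathbb{R}$ with $d(x,y)=\max(y-x,0)$. Then the $\tau_d$-open sets are the left rays $(-\infty,a)$ (together with $\emptyset$ and $\mathbb{R}$), the $\tau^\star$-open sets are the right rays $(a,\infty)$, and $d^s(x,y)=|x-y|$, so $\tau^s$ is the usual topology on $\mathbb{R}$. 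Every intersection $V\cap W$ with $V\in\tau_d$, $W\in\tau^\star$ is a single (possibly unbounded) interval, so the $\tau^s$-open set $(0,1)\cup(2,3)$ cannot be written in the required form.

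The correct statement---and almost certainly the intended content of the cited lemma---is that $\{V\cap W:V\in\tau_d,\ W\in\tau^\star\}$ is a base for $\tau^s$, equivalently $\tau^s=\tau_d\vee\tau^\star$ in the lattice of topologies on $X$. Your proof sketch establishes exactly this, so once the statement is corrected your argument is complete.
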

%

\begin{lem}\label{Propiedades de separaci\'on}(\cite{FlaKop97}; Lemma 3,4)
If $(X,d)$ is a ${\bf V}$-continuity space, $(X,\tau^s)$ satisfies the following separation properties:
\begin{enumerate}
    \item (pseudo-Hausdorff) For all $x,y\in X$, if $x\notin \overline{\{y\}}$ according to $(X,\tau_d)$ then there exist $U,V\subseteq X$ such that $x\in U, y\in V, U\in \tau_d$, $V\in\tau^\star$ and $U\cap V=\emptyset$.
    \item (regularity) For all $x\in X$ and $A\subseteq X$, if $A\in\tau_d$ and $x\in A$ then there exist $U,C\subseteq X$ such that $U$ is $\tau_d$-open, $C$ es $\tau^\star$-closed and $x\in U\subseteq C\subseteq A$.
\end{enumerate}
\end{lem}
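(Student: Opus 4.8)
The plan is to handle both items with one construction. Given a point $x$ inside a $\tau_d$-open set, we can fit an open disc $B_\epsilon(x)$ around it; by density of $\prec$ (Lemma~\ref{densidad}) we may replace $\epsilon$ by some $\delta$ with $0\prec\delta\prec\epsilon$, so that $B_\delta(x)\subseteq C_\delta(x)\subseteq B_\epsilon(x)$, where the first inclusion uses that $\prec$ implies $\leq$ (Lemma~\ref{propiedades b\'asicas de la relaci\'on prec}(1)) and the second uses $\delta\prec\epsilon$ together with Lemma~\ref{propiedades b\'asicas de la relaci\'on prec}(2). The one non-routine ingredient is that the closed disc $C_\delta(x)$, although defined from $d$, is closed for the dual topology: applying Proposition~\ref{Duales son cerrados} to the ${\bf V}$-continuity space $(X,d^\star)$ (whose dual distance is $d$ again, since $d^{\star\star}=d$) shows that $C_\delta(x)$ is $\tau^\star$-closed. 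I would record this remark at the outset.

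For (1), suppose $x\notin\overline{\{y\}}$ in $(X,\tau_d)$. Since $\overline{\{y\}}$ is $\tau_d$-closed and $x$ lies in its (open) complement, there is $\epsilon\in{\bf V}^+$ with $B_\epsilon(x)\subseteq X\setminus\overline{\{y\}}$; as $y\in\overline{\{y\}}$, this means $d(x,y)\not\prec\epsilon$. I would choose $\delta$ with $0\prec\delta\prec\epsilon$ and set $U:=B_\delta(x)$ and $V:=X\setminus C_\delta(x)$. Then $U\in\tau_d$ and $x\in U$ because $d(x,x)=0\prec\delta$; $V\in\tau^\star$ by the remark above; $y\in V$ since otherwise $d(x,y)\leq\delta\prec\epsilon$ would give $d(x,y)\prec\epsilon$; and $U\subseteq C_\delta(x)$, so $U\cap V=\emptyset$.

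For (2), let $A\in\tau_d$ with $x\in A$, pick $\epsilon\in{\bf V}^+$ with $B_\epsilon(x)\subseteq A$, and choose $\delta$ with $0\prec\delta\prec\epsilon$. Put $U:=B_\delta(x)$ and $C:=C_\delta(x)$. Then $U$ is $\tau_d$-open with $x\in U$, $C$ is $\tau^\star$-closed by the remark, $U\subseteq C$ because $\prec$ implies $\leq$, and $C\subseteq B_\epsilon(x)\subseteq A$; hence $x\in U\subseteq C\subseteq A$.

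The main obstacle is conceptual rather than computational: one should not look for two $\tau_d$-open sets (the space need not be Hausdorff for $\tau_d$ alone), but instead pair a $\tau_d$-neighbourhood with a set that is open, resp.\ closed, for $\tau^\star$, the bridge between the two topologies being exactly the $\tau^\star$-closedness of $d$-closed discs furnished by Proposition~\ref{Duales son cerrados}. Once that observation is in hand, the rest is only the monotonicity/transfer properties of $\prec$ and the density lemma.
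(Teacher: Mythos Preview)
Your argument is correct. The paper does not supply its own proof of this lemma---it is quoted from \cite{FlaKop97}---so there is nothing in the paper to compare against; your construction (interpolating $0\prec\delta\prec\epsilon$ via Lemma~\ref{densidad} and invoking Proposition~\ref{Duales son cerrados} applied to $(X,d^\star)$ to get that $C_\delta(x)$ is $\tau^\star$-closed) is the natural one and matches the standard proof in the cited source.
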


\subsubsection{{\bf V}-domains}
In order to give a version of \L o\'s Theorem in our setting, following~\cite{ChKe66,BeBeHeUs}, we need to consider compact and Hausdorff topological spaces. The setting which involves these assumptions in continuity spaces corresponds to {\it $\mathbf{V}$-domains}.

\begin{defi}
A ${\bf V}$-continuity space $(X,d)$ is said to be $T_0$, if and only if, for any $x,y\in X$,  $d(x,y)=0$ and $d(y,x)=0$ implies $x=y$.
\end{defi}

\begin{remark}[\cite{FlaKop97}, pg 120]\label{EquivT0}
A continuity space $(X,d)$ is $T_0$, if and only if, $(X,\tau_d)$ is $T_0$ as a topological space and $(X,\tau_d^s)$ is Hausdorff. 
\end{remark}

\begin{defi}\label{V-domain}
A ${\bf V}$-continuity space $(X,d)$ is said to be a ${\bf V-domain}$, if and only if,  it is $T_0$ and $(X,\tau_d^s)$ is compact.
\end{defi}

\begin{remark}\label{Domain-Hausdorff-Compact}
Let $(X,d)$ be a $\mathbf{V}$-domain, therefore by definition $(X,\tau_d^s)$ is compact. Since $(X,d)$ is $T_0$, by Remark~\ref{EquivT0} $(X,\tau_d^s)$ is Hausdorff.
\end{remark}
 
The importance of the previous properties lies on the fact that these allow us to provide a proof of a version of \L os's Theorem in the logic that we will introduce in this paper (Theorem~\ref{Teorema de Los}). We will provide some examples which satisfy these properties.

\begin{Prop}(\cite{FlaKop97}; Thrm 4.14)
The following examples are domains:
\begin{enumerate}
    \item $2=(\{0,1\}, 0\leq 1, \vee)$.
    \item $([0,1],\leq, +)$.
    \item The quantale of errors $([0,1],\geq, \otimes)$, where $a\otimes b:=max\{a+b-1,0\}$.
    \item The quantale of  fuzzy subsets associated to a set $X$\footnote{It is denoted by $\Lambda(X)$in~\cite{FlaKop97}}.
    \item The free local associated to a set $X$: $(\Omega(X),\supseteq, \cap)$\footnote{Tt is denoted by $\Gamma(X)$ in~\cite{FlaKop97}}.
\end{enumerate}
 \end{Prop}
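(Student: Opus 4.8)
The plan is to verify, for each of the five co-quantales ${\bf V}$ listed, that the canonical continuity space $(V,d)$ with $d(a,b):=b\dotdiv a$ is a ${\bf V}$-domain; note that for this $d$ one has $d^s=d^s_{\bf V}$ in the sense of Remark~\ref{la distancia al 0 en el caso sim\'etrico}, so only the $T_0$ property and compactness of $(V,\tau_d^s)$ are at stake. The $T_0$ condition is immediate and uniform across all cases: by Proposition~\ref{adjunci\'on}(4), $d(a,b)=b\dotdiv a=0$ iff $b\le a$, and likewise $d(b,a)=0$ iff $a\le b$, so $d(a,b)=d(b,a)=0$ forces $a=b$. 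Everything therefore reduces to showing that $(V,\tau_d^s)$ is compact in each of the five instances, which I would do by identifying $\tau_d^s$ explicitly.

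For cases (1)--(3) I would compute $d^s_{\bf V}$ and the positives filter ${\bf V}^+$ by hand. For ${\bf 2}$ one gets $d^s(0,1)=1$ and $0\in{\bf 2}^+$, so that $B_0(x)=\{x\}$ is open; hence $\tau_d^s$ is the discrete topology on a two-point set, trivially compact. For $([0,1],\le,+)$ one has $b\dotdiv a=\max\{b-a,0\}$, so $d^s(a,b)=|a-b|$, and since the co-well-below relation restricts to the usual strict order on $[0,1]$ the discs $B_\epsilon(x)$ are the ordinary open intervals; thus $\tau_d^s$ is the Euclidean topology on $[0,1]$, compact by Heine--Borel. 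For the quantale of errors $([0,1],\ge,\otimes)$ a direct computation gives $b\dotdiv a=\min\{1,1+(b-a)\}$, hence $d^s(a,b)=1-|a-b|$; here $\prec$ is the strict reverse order and ${\bf V}^+=[0,1)$, and one checks $B_\epsilon(x)=\{y:|x-y|<1-\epsilon\}$, so again $\tau_d^s$ is the Euclidean topology on $[0,1]$, compact.

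For case (5), the free local $(\Omega(X),\supseteq,\cap)$, the plan is to unwind the residuation and the relation $\prec$: one checks $q\dotdiv p=\{Y\in\mathcal{P}_{fin}(X):\forall Y'\subseteq Y\,(Y'\in p\Rightarrow Y'\in q)\}$, whence $d^s(p,q)=\{Y:\forall Y'\subseteq Y\,(Y'\in p\Leftrightarrow Y'\in q)\}$; then, using Lemma~\ref{continuidad} together with the fact that ${\downarrow}Y\prec\epsilon$ iff $\epsilon\subseteq{\downarrow}Y$, one identifies the basic disc $B_\epsilon(p)$ with $\{q:p\cap{\downarrow}Y_\epsilon=q\cap{\downarrow}Y_\epsilon\}$ for $Y_\epsilon:=\bigcup\epsilon$ (a finite set). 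Identifying a family with its indicator gives an embedding $\Omega(X)\hookrightarrow\{0,1\}^{\mathcal{P}_{fin}(X)}$, and since the sets ${\downarrow}Y$ with $Y$ finite are cofinal among the finite subsets of $\mathcal{P}_{fin}(X)$ (take $Y=\bigcup F$), the above shows that $\tau_d^s$ is exactly the subspace topology coming from the product topology. As $\Omega(X)$ is cut out by the clopen conditions ``$Y\notin p$ or $Y'\in p$'' (one for each pair $Y'\subseteq Y$), it is closed in the compact Cantor cube $\{0,1\}^{\mathcal{P}_{fin}(X)}$, hence compact by Tychonoff. The quantale of fuzzy subsets of $X$ is treated in the same spirit, with $[0,1]$ in place of $\{0,1\}$: one realizes it as a closed subspace of a Tychonoff product of unit intervals, compact by Tychonoff and Heine--Borel, taking the explicit description of $\dotdiv$ and $\prec$ on it from~\cite{FlaKop97}.

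The $T_0$ step and cases (1)--(3) are routine once the residuation is written down. The real obstacle is (4)--(5): one must correctly compute $\dotdiv$ and the co-well-below relation $\prec$ in these two lattices, pin down the positives filter, and then check that the topology generated by the co-quantale discs $B_\epsilon(x)$ is neither finer nor coarser than the product (respectively, Cantor-cube-subspace) topology. Once that identification is secured, compactness is immediate from Heine--Borel and Tychonoff's theorem; this is, in essence, the content of \cite{FlaKop97}, Theorem~4.14.
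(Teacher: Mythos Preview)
The paper does not actually prove this proposition: it is stated with a bare citation to \cite{FlaKop97}, Theorem~4.14, and no argument is given. So there is nothing in the paper to compare your proposal against line by line.

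That said, your outline is a faithful reconstruction of what such a proof must do, and the concrete computations you give are correct. The $T_0$ step via Proposition~\ref{adjunci\'on}(4) is exactly right and uniform. In case~(1) your observation that $0\prec 0$ in ${\bf 2}$ (hence $0\in{\bf 2}^+$ and singletons are open) is correct, if a little surprising at first sight. In cases~(2) and~(3) your identifications of $d^s$ with $|a-b|$ and $1-|a-b|$ respectively, and of $\tau^s_d$ with the Euclidean topology on $[0,1]$, are accurate. For case~(5) your computation of $q\dotdiv p$ and of $d^s(p,q)$ is correct, and the identification of $\tau^s_d$ with the subspace topology inherited from the Cantor cube $\{0,1\}^{\mathcal{P}_{fin}(X)}$, together with closedness of $\Omega(X)$ there, is the right mechanism for compactness.

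The only genuine soft spot is case~(4): you defer entirely to \cite{FlaKop97} for the definition of $\Lambda(X)$ and for the computation of $\dotdiv$ and $\prec$ there, and your ``same spirit'' remark is not a proof. If you want this to stand on its own you should at least state what $\Lambda(X)$ is, write down its residuation, and make explicit the embedding into a product of intervals whose subspace topology coincides with $\tau^s_d$; otherwise this case is, like the paper's treatment, just a pointer to the literature.
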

 
\section{Value co-quantale logics}\label{VLogics}

In this section, we will introduce a logic with truth values within value co-quantales, generalizing Continuous Logic (see~\cite{BeBeHeUs}, where the truth values are taken in the unitary interval $[0,1]$, which is a particular case of our setting).
\ \\ \ \\
Throughout the rest of this paper, we assume some technical conditions (Definitions~\ref{Cuantal Co-divisible}, \ref{cuantal co-Girard} and \ref{V-domain}) that we need for providing a proof of a version of Tarski-Vaught test -Proposition~\ref{test de Tarski-Vaught}- and a version of \L o\'s Theorem -Theorem~\ref{Teorema de Los}- for the logics introduced in this paper. At some point, we require to work with the symmetric distance $d^s_V$ of $\mathbf{V}$. 

\begin{assumption}
Throughout this section, we assume that ${\bf V}$ is a value co-quantale which is co-divisible, co-Girard and a $\mathbf{V}$-domain.
\end{assumption}

\subsection{Modulus of uniform continuity}
Modulus of uniform continuity are introduced in~\cite{BeBeHeUs} as a technical way of controlling from the language the uniform continuity of the mappings considered in Continuous Logic. In this subsection, we develop an analogous study of modulus of uniform continuity but in the setting of mappings valued in value co-quantales.

\begin{remark}
Given $(M,d_M)$, $(N,d_N)$ ${\bf V}$-continuity spaces and\linebreak
$(x_1,y_1),(x_2,y_2)\in M\times N$, we define $d_{M\times N}((x_1,y_1), (x_2,y_2)):=d_M(x_1,y_1)\vee d_N(x_2,y_2)$. By Proposition~\ref{estructura en el producto}, $(M\times N,d_{M\times N})$ is a ${\bf V}$-continuity space. 
\end{remark}
%

\begin{defi}(c.f. \cite{BeBeHeUs}; pg 8)\label{modunif}\index{Modulus of uniform continuity}
 Given a mapping $f:M\rightarrow N$ between two ${\bf V}$-continuity spaces $(M,d_M)$ and $(N,d_N)$, we say that $\Delta:{\bf V}^+\rightarrow{\bf V}^+$ is a {\bf modulus of uniform continuity} for $f$, if and only if, for any $x,y \in M$ and any $\epsilon\in{\bf V}^+$, $d_M(x,y)\leq\Delta(\epsilon)$ implies $d_N(f(x),f(y))\leq\epsilon$.
\end{defi}

As a basic consequence we have the following fact.

\begin{Prop}\label{CompositionModulus}
Given  $(M,d_M), (N,d_N)$, $(K,d_K)$ ${\bf V}$-continuity spaces and  $f:M\rightarrow N$, $g:N\rightarrow K$ uniformly continuous mappings, $\Delta$ and $\Theta$ modulus of uniform continuity for $f$ and $g$ respectively, then $\Delta\circ\Theta$ is a modulus of uniform continuity for $g\circ f$.
\end{Prop}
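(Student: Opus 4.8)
The plan is to unwind the definition of modulus of uniform continuity (Definition~\ref{modunif}) and chain the two implications provided by $\Theta$ and $\Delta$. Fix $x,y\in M$ and $\epsilon\in{\bf V}^+$. Since $\Theta$ is a modulus of uniform continuity for $g$, we know that $d_N(y_1,y_2)\leq\Theta(\epsilon)$ implies $d_K(g(y_1),g(y_2))\leq\epsilon$ for all $y_1,y_2\in N$. Since $\Delta$ is a modulus of uniform continuity for $f$, we know that $d_M(x_1,x_2)\leq\Delta(\Theta(\epsilon))$ implies $d_N(f(x_1),f(x_2))\leq\Theta(\epsilon)$ for all $x_1,x_2\in M$; here I use that $\Theta(\epsilon)\in{\bf V}^+$, so $\Delta$ may be evaluated at it, and that $\Delta\circ\Theta$ indeed maps ${\bf V}^+$ to ${\bf V}^+$, which is the content needed for the statement to even typecheck.

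The key steps, in order, are: (1) assume $d_M(x,y)\leq(\Delta\circ\Theta)(\epsilon)=\Delta(\Theta(\epsilon))$; (2) apply the modulus property of $\Delta$ with $\epsilon$ replaced by $\Theta(\epsilon)$ to conclude $d_N(f(x),f(y))\leq\Theta(\epsilon)$; (3) apply the modulus property of $\Theta$ to the pair $f(x),f(y)\in N$ to conclude $d_K(g(f(x)),g(f(y)))\leq\epsilon$, i.e. $d_K((g\circ f)(x),(g\circ f)(y))\leq\epsilon$. Since $x,y,\epsilon$ were arbitrary, this shows $\Delta\circ\Theta$ is a modulus of uniform continuity for $g\circ f$, as desired.

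I do not expect any real obstacle: the proof is a direct two-step chase through the definition, and no properties of the co-quantale ${\bf V}$ beyond those already fixed in the running assumptions are needed (in particular we do not need co-divisibility, co-Girard, or the $\mathbf{V}$-domain hypothesis here). The only point meriting a word of care is the order of composition: because $\Delta$ absorbs the \emph{inner} function $f$ while $\Theta$ governs the \emph{outer} function $g$, the correct modulus for $g\circ f$ is $\Delta\circ\Theta$ and not $\Theta\circ\Delta$ — the composition of moduli runs contravariantly to the composition of maps, exactly as in the classical $[0,1]$-valued case of~\cite{BeBeHeUs}.

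\begin{proof}
Fix $x,y\in M$ and $\epsilon\in{\bf V}^+$. Note that $\Theta(\epsilon)\in{\bf V}^+$ since $\Theta:{\bf V}^+\to{\bf V}^+$, hence $(\Delta\circ\Theta)(\epsilon)=\Delta(\Theta(\epsilon))\in{\bf V}^+$. Suppose that $d_M(x,y)\leq(\Delta\circ\Theta)(\epsilon)=\Delta(\Theta(\epsilon))$. Since $\Delta$ is a modulus of uniform continuity for $f$, applying Definition~\ref{modunif} with $\Theta(\epsilon)$ in the role of the error, we get $d_N(f(x),f(y))\leq\Theta(\epsilon)$. Now, since $\Theta$ is a modulus of uniform continuity for $g$, applying Definition~\ref{modunif} to the pair $f(x),f(y)\in N$, we obtain $d_K(g(f(x)),g(f(y)))\leq\epsilon$, that is, $d_K((g\circ f)(x),(g\circ f)(y))\leq\epsilon$. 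As $x,y\in M$ and $\epsilon\in{\bf V}^+$ were arbitrary, this proves that $\Delta\circ\Theta$ is a modulus of uniform continuity for $g\circ f$.
\end{proof}
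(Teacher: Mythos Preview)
Your proof is correct and is exactly the natural two-step chase through Definition~\ref{modunif}; the paper itself omits the proof, presenting the proposition as an immediate consequence of the definition, so your argument is the expected one.
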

%

\begin{defi}
 Given a sequence of mappings $(f_n)_{n\in \mathbb{N}}$ with domain $(M,d_M)$ and codomain $(N,d_N)$ (both of them ${\bf V}$-continuity spaces), we say that $(f_n)_{n\in \mathbb{N}}$ {\bf uniformly converges} to a mapping $f:M\rightarrow N$, if and only if, for all $\epsilon\in{\bf V}^+$ there exists $n\in\mathbb{N}$ such that for any $m\geq n$ and for any $x\in M$ we may say that $d_N(f_m(x),f(x))\leq\epsilon$.
\end{defi}


It is straightforward to see that uniform convergence behaves well with respect to composition of mappings. 

\begin{Prop}
Let $(M,d_M),(N,d_N)$, $(K,d_K)$ be ${\bf V}$-continuity spaces, $f:M\to N$,  $(f_n)_{n\in \mathbf{N}}$ be a sequence of mappings from $M$ to $N$, $g:N\to K$, and $(g_n)_{n\in \mathbb{N}}$ be a sequence of mappings from $N$ to $K$ such that $(f_n)_{n\in \mathbb{N}}$ uniformly converges to $f$ and $(g_n)_{n\in \mathbb{N}}$ uniformly converges to $g$. If $g$ is uniformly continuous, then $(g_n\circ f_n)_{n\in \mathbb{N}}$ uniformly converges to $g\circ f$.
\end{Prop}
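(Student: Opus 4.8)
The statement is the standard ``uniform convergence is preserved under composition, given uniform continuity of the outer limit'' lemma, transported to the $\mathbf{V}$-valued setting, so the plan is to run the usual $\epsilon$-argument while being careful that ``$\epsilon/2$'' must be replaced by the co-quantale arithmetic. Fix $\epsilon \in \mathbf{V}^+$. By Lemma~\ref{argumento epsilon medios} choose $\delta \in \mathbf{V}^+$ with $\delta + \delta \prec \epsilon$, hence in particular $\delta + \delta \le \epsilon$ by Lemma~\ref{propiedades b\'asicas de la relaci\'on prec}(1). Let $\Theta : \mathbf{V}^+ \to \mathbf{V}^+$ be a modulus of uniform continuity for $g$, and set $\eta := \Theta(\delta) \wedge \delta \in \mathbf{V}^+$ (this meet lies in $\mathbf{V}^+$ by the filter property in Definition~\ref{propiedadefiltro}(2) applied in the value lattice underlying $\mathbf{V}$).

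Next I would extract the two indices: since $(f_n)$ uniformly converges to $f$, pick $n_1$ so that $d_N(f_m(x), f(x)) \le \eta \le \Theta(\delta)$ for all $m \ge n_1$ and all $x \in M$; since $(g_n)$ uniformly converges to $g$, pick $n_2$ so that $d_K(g_m(y), g(y)) \le \delta$ for all $m \ge n_2$ and all $y \in N$. Put $n_0 := \max\{n_1, n_2\}$. Now for $m \ge n_0$ and $x \in M$, apply transitivity of $d_K$ (Definition~\ref{espacios de continuidad}(2)) with the intermediate point $g(f_m(x))$:
\[
d_K\big(g_m(f_m(x)),\, g(f(x))\big) \le d_K\big(g_m(f_m(x)),\, g(f_m(x))\big) + d_K\big(g(f_m(x)),\, g(f(x))\big).
\]
The first summand is $\le \delta$ because $m \ge n_2$ (uniform convergence of $(g_n)$, evaluated at the point $f_m(x) \in N$). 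For the second summand, $d_N(f_m(x), f(x)) \le \Theta(\delta)$ since $m \ge n_1$, so by the defining property of the modulus $\Theta$ (Definition~\ref{modunif}) we get $d_K(g(f_m(x)), g(f(x))) \le \delta$. Hence the right-hand side is $\le \delta + \delta \le \epsilon$ by Proposition~\ref{monotonia de la suma}(2) and the choice of $\delta$. Since $m \ge n_0$ and $x \in M$ were arbitrary, this establishes that $(g_n \circ f_n)$ uniformly converges to $g \circ f$.

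The only genuinely non-routine point is the replacement of the halving trick: in $[0,1]$ one simply writes $\epsilon/2$, but here one must invoke Lemma~\ref{argumento epsilon medios} to obtain $\delta$ with $\delta + \delta \prec \epsilon$, and one must check that all the auxiliary elements ($\eta$, and the meet $\Theta(\delta)\wedge\delta$) remain in the positives filter $\mathbf{V}^+$ — this is where Definition~\ref{propiedadefiltro}(2) is used. Everything else is monotonicity of $+$ (Proposition~\ref{monotonia de la suma}(2)) and transitivity of the continuity-space distance, exactly as in the classical argument; no co-divisibility, co-Girard, or $\mathbf{V}$-domain hypotheses are needed for this particular lemma.
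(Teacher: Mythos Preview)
Your argument is correct and is exactly the expected one; the paper itself omits the proof, merely remarking beforehand that ``it is straightforward to see that uniform convergence behaves well with respect to composition of mappings.'' One minor simplification: since $\Theta:\mathbf{V}^+\to\mathbf{V}^+$, the element $\Theta(\delta)$ is already in $\mathbf{V}^+$, so you can invoke the uniform convergence of $(f_n)$ directly at $\Theta(\delta)$ without forming the auxiliary meet $\eta=\Theta(\delta)\wedge\delta$.
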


\subsubsection{Uniform continuity of $\bigwedge$ and $\bigvee$.}

The following fact is very important because, as in Continuous Logic, it allows us to control (by using directly the language) the uniform continuity of both $\bigwedge$ ($\inf$) and $\bigvee$ ($\sup$), understood as quantifiers (in an analogous way as in Continuous Logic).

\begin{Prop}\label{caso de los cuantificadores}
Let $(M,d_M)$, $(N,d_N)$ be ${\bf V}$-continuity spaces, $f:M\times N \rightarrow {\bf V}$ be a uniformly continuous mapping provided with a modulus of uniform continuity $\Delta:{\bf V}^+ \rightarrow {\bf V}^+$, then $\Delta$ is also a modulus of uniform continuity for the mappings $\bigvee_f: M \rightarrow {\bf V}$ and $\bigwedge_f: M \rightarrow {\bf V}$ defined by $x \mapsto \bigvee_{y \in N} f(x,y)$ and $x \mapsto \bigwedge_{y \in N} f(x,y)$ respectively.
\end{Prop}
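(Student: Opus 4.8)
The plan is to show that $\Delta$ works as a modulus of uniform continuity for $\bigvee_f$ and $\bigwedge_f$ by reducing the statement about suprema/infima to the pointwise hypothesis about $f$, using the residuation/adjunction machinery from Proposition~\ref{adjunci\'on} and the fact (Fact~\ref{resta truncada es adjunto izquierdo}, Proposition~\ref{residuar un \'infimo}) that $\dotdiv$ interacts well with $\bigvee$ and $\bigwedge$. Recall that in a $\mathbf{V}$-continuity space the distance on $\mathbf{V}$ itself that is implicitly being used for ``$d_N(f(x),f(y))\le\epsilon$'' when $N=\mathbf{V}$ is the symmetric distance $d^s_V(a,b)=(a\dotdiv b)\vee(b\dotdiv a)$; so the hypothesis reads: $d_M(x,y)\le\Delta(\epsilon)$ implies $(f(x,v)\dotdiv f(y,w))\vee(f(y,w)\dotdiv f(x,v))\le\epsilon$ for the relevant pairs. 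I would actually only need the one-sided inequality $f(x,v)\dotdiv f(y,v)\le\epsilon$ and its symmetric counterpart, which follows since $a\vee b\le\epsilon$ gives $a\le\epsilon$ and $b\le\epsilon$.

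First I would fix $x,x'\in M$ with $d_M(x,x')\le\Delta(\epsilon)$. The key observation is that $d_{M\times N}((x,y),(x',y))=d_M(x,x')\vee d_N(y,y)=d_M(x,x')\le\Delta(\epsilon)$ for every $y\in N$, since $d_N(y,y)=0$ by reflexivity. Hence by the modulus hypothesis applied to the pair $((x,y),(x',y))$ we get, for every $y\in N$,
\[
f(x,y)\dotdiv f(x',y)\le\epsilon\qquad\text{and}\qquad f(x',y)\dotdiv f(x,y)\le\epsilon .
\]
Now I would compute $\bigvee_f(x)\dotdiv\bigvee_f(x')$. By Fact~\ref{resta truncada es adjunto izquierdo}, $\bigl(\bigvee_{y\in N}f(x,y)\bigr)\dotdiv\bigvee_f(x')=\bigvee_{y\in N}\bigl(f(x,y)\dotdiv\bigvee_f(x')\bigr)$, and since $f(x',y)\le\bigvee_f(x')$, Lemma~\ref{monoton\'ia-intercambio} gives $f(x,y)\dotdiv\bigvee_f(x')\le f(x,y)\dotdiv f(x',y)\le\epsilon$. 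Taking the join over $y$ yields $\bigvee_f(x)\dotdiv\bigvee_f(x')\le\epsilon$; the reversed inequality is symmetric. This shows $d^s_V(\bigvee_f(x),\bigvee_f(x'))\le\epsilon$, i.e. $\Delta$ is a modulus of uniform continuity for $\bigvee_f$. For $\bigwedge_f$ I would argue dually: by Proposition~\ref{residuar un \'infimo}, $\bigwedge_f(x)\dotdiv\bigwedge_{y}f(x',y)=\bigwedge_f(x)\dotdiv\bigwedge_{y}f(x',y)$ need the identity $a\dotdiv\bigwedge_i b_i=\bigvee_i(a\dotdiv b_i)$; combined with $\bigwedge_f(x)\le f(x,y)$ and monotonicity to reduce each term $\bigwedge_f(x)\dotdiv f(x',y)\le f(x,y)\dotdiv f(x',y)\le\epsilon$, then take the join.

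The main obstacle I anticipate is bookkeeping about which distance is meant on the codomain $\mathbf{V}$ and making sure the $\dotdiv$-identities are applied to the right argument (the one being ``joined'' or ``met'' over). Concretely: in the $\bigvee$ case one wants $\dotdiv$ to distribute over the $\bigvee$ that sits in the \emph{first} slot, which is exactly Fact~\ref{resta truncada es adjunto izquierdo}; in the $\bigwedge$ case one wants it to convert a $\bigwedge$ in the \emph{second} slot into a $\bigvee$, which is Proposition~\ref{residuar un \'infimo}. Once these are lined up correctly, each step is a one-line application of monotonicity (Lemma~\ref{monoton\'ia-intercambio}) plus reflexivity of $d_N$; there is no genuinely hard analytic content, so the proof is short.
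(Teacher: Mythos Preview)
Your proposal is correct and follows essentially the same route as the paper: fix $x,x'$ with $d_M(x,x')\le\Delta(\epsilon)$, use reflexivity $d_N(y,y)=0$ to see that $d_{M\times N}((x,y),(x',y))\le\Delta(\epsilon)$ for every $y$, extract the pointwise bound $f(x,y)\dotdiv f(x',y)\le\epsilon$, and then aggregate over $y$. The only cosmetic difference is in how the aggregation step is phrased: the paper passes through the adjunction $a\dotdiv b\le c\Leftrightarrow a\le b+c$ (Proposition~\ref{adjunci\'on}(1)) to write $f(x,y)\le \bigvee_z f(x',z)+\epsilon$, takes the join, and converts back; you instead invoke the distributivity identities $(\bigvee_i b_i)\dotdiv a=\bigvee_i(b_i\dotdiv a)$ and $a\dotdiv\bigwedge_i b_i=\bigvee_i(a\dotdiv b_i)$ (Fact~\ref{resta truncada es adjunto izquierdo}, Proposition~\ref{residuar un \'infimo}) together with the monotonicity Lemma~\ref{monoton\'ia-intercambio}. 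These are equivalent one-line manipulations in a residuated lattice, so there is no substantive difference.
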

%
 %
\begin{proof}
Let $\epsilon \in {\bf V}$ be such that $0\prec \epsilon$, $y\in N$ and $a,b \in M$ be such that $d_{M}(b,a)\leq \Delta(\epsilon)$. Then, 
\begin{eqnarray*}
d_{M\times N}((b,y), (a,y))&:=&d_{M}(b,a)\vee d_N(y,y)=d_{M}(b,a)\\
&\le&\Delta(\epsilon)\\
\end{eqnarray*}

Since $\Delta$ is a modulus of uniform continuity for $f$, then

\begin{eqnarray*}
f(a,y)\dotdiv f(b,y)&\leq& d_{V}(f(b,y),f(a,y))\\
&\leq&\epsilon
\end{eqnarray*}

By Proposition~\ref{adjunci\'on} (1) we may say 

$$f(a,y)\leq f(b,y) + \epsilon\leq \bigvee_{z\in N}f(b,z) + \epsilon$$

Since $y\in N$ was taken arbitrarily, then

$$\bigvee_{z\in N}f(a,z)\leq \bigvee_{z\in N}f(b,z) + \epsilon$$

and by Proposition~\ref{adjunci\'on} (1) 

$$\bigvee_{f} f(a)\dotdiv\bigvee_{f} f(b)=\bigvee_{z\in N}f(a,z)\dotdiv\bigvee_{z\in N}f(b,z)\leq\epsilon.$$

%
In a similar way we prove the related statement for $\bigwedge_f$.
%
\end{proof}

As an immediate consequence, we have the following useful facts.
%
\begin{Coro}\label{corolario importante}
Given an arbitrarily set $I\neq \emptyset$ and $I$-sequences $(a_i)_{i\in I}, (b_i)_{i\in I}$ in a $\text{V}$-continuity space $(M,d_M)$, if $\epsilon\in{\bf V}^+$ satisfies $d_V(a_i,b_i)\leq \epsilon$ for all $i\in I$, then $d_V(\bigvee_{i\in I}a_i,\bigvee_{i\in I}b_i)\leq \epsilon$ and $d_V(\bigwedge_{i\in I}a_i,\bigwedge_{i\in I}b_i)\leq \epsilon$.
\end{Coro}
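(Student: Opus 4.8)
The plan is to derive Corollary~\ref{corolario importante} directly from Proposition~\ref{caso de los cuantificadores} by a standard trick: encode the two $I$-sequences as a single function of two variables and feed it to the $\bigvee_f$ and $\bigwedge_f$ machinery. Concretely, take $N := I$ equipped with the discrete continuity space structure (i.e. $d_N(i,j) = 0$ if $i=j$ and $d_N(i,j) = 1$ otherwise; note this is a ${\bf V}$-continuity space since reflexivity and transitivity are trivial in the discrete case), and take $M$ to be the one-point continuity space $\{\ast\}$ with $d_M(\ast,\ast) = 0$. Define $f, g : M \times N \to {\bf V}$ by $f(\ast, i) := a_i$ and $g(\ast, i) := b_i$. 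Then $\bigvee_f f(\ast) = \bigvee_{i\in I} a_i$, $\bigvee_g g(\ast) = \bigvee_{i\in I} b_i$, and likewise for $\bigwedge$.

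The key step is to produce, from the hypothesis $d_V(a_i,b_i) \le \epsilon$ for all $i$, the estimate $\bigvee_{i\in I} a_i \dotdiv \bigvee_{i\in I} b_i \le \epsilon$ and symmetrically $\bigvee_{i\in I} b_i \dotdiv \bigvee_{i\in I} a_i \le \epsilon$; since $d_V$ here is the symmetric distance $d^s_V(x,y) = (x\dotdiv y)\vee(y\dotdiv x)$, these two inequalities together give $d_V(\bigvee_{i\in I} a_i, \bigvee_{i\in I} b_i) \le \epsilon$. To get the first one, I would mimic the proof of Proposition~\ref{caso de los cuantificadores} verbatim: from $a_i \dotdiv b_i \le d_V(a_i,b_i) \le \epsilon$ we get $a_i \le b_i + \epsilon \le \bigvee_{j} b_j + \epsilon$ by Proposition~\ref{adjunci\'on} (1) and Proposition~\ref{monotonia de la suma} (2); since $i$ was arbitrary, $\bigvee_i a_i \le \bigvee_j b_j + \epsilon$, and again by Proposition~\ref{adjunci\'on} (1) this is $\bigvee_i a_i \dotdiv \bigvee_i b_i \le \epsilon$. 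Swapping the roles of $a$ and $b$ gives the other inequality, and the $\bigwedge$ case is handled by the same argument using Proposition~\ref{residuar un \'infimo} (or just by running the dual computation already alluded to at the end of the proof of Proposition~\ref{caso de los cuantificadores}).

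Alternatively — and this is probably the cleanest write-up — one can skip the artificial continuity-space packaging entirely and simply observe that the displayed chain of inequalities in the proof of Proposition~\ref{caso de los cuantificadores} never used anything about $f$ beyond the pointwise bound $f(a,y)\dotdiv f(b,y)\le\epsilon$; so one restates that chain with $a_i$ in place of $f(a,y)$ and $b_i$ in place of $f(b,y)$. I would present it this way, remarking that the corollary is really just Proposition~\ref{caso de los cuantificadores} applied with $M$ a singleton.

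I do not expect a genuine obstacle here: the only point that needs a moment's care is making sure the hypothesis is being used in the right direction — we need $a_i\dotdiv b_i\le\epsilon$ \emph{and} $b_i\dotdiv a_i\le\epsilon$, which is exactly what $d_V(a_i,b_i)=d^s_V(a_i,b_i)\le\epsilon$ supplies, so one must remember to run the argument twice (once for each direction of $\dotdiv$) before concluding the bound on the symmetric distance. Everything else is a direct appeal to Proposition~\ref{adjunci\'on} and Proposition~\ref{monotonia de la suma}.
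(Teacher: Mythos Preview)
Your direct argument in the second and third paragraphs is correct and is exactly what the paper has in mind: the corollary is stated without proof as an ``immediate consequence'' of Proposition~\ref{caso de los cuantificadores}, and the way to see this is precisely to rerun the displayed chain of inequalities with $a_i,b_i$ in place of $f(a,y),f(b,y)$, once in each direction, and then take the join to bound the symmetric distance.

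One small point about your first paragraph: the packaging with $M=\{\ast\}$ a singleton and \emph{two} functions $f,g$ does not literally let you invoke Proposition~\ref{caso de los cuantificadores} as a black box. That proposition compares the values of a \emph{single} function $\bigvee_f$ at two different points $a,b\in M$; with $M$ a singleton there is nothing to compare. If you wanted a genuine black-box application you would instead take $M=\{p,q\}$ with, say, $d_M(p,q)=d_M(q,p)=\epsilon$, define one function $h(p,i):=a_i$, $h(q,i):=b_i$, and check that $id_{{\bf V}^+}$ is a modulus of uniform continuity for $h$ --- but verifying that last point already amounts to the computation you do anyway. So your instinct in the third paragraph is right: drop the artificial packaging and just present the direct chain of inequalities, noting (as you do) that both directions of $\dotdiv$ are needed before concluding the bound on $d^s_V$.
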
 


\begin{Coro}
Given a ${\bf V}$-continuity space $(M,d_M)$, $I\neq \emptyset$, and a $I$-sequence of mappings $(f_i:M\rightarrow {\bf V})_{i\in I}$, if $\Delta:{\bf V}^+\rightarrow{\bf V}^+$ is a modulus of uniform continuity for $f_i$ ( $i\in I$), then $\Delta$ is also a modulus of uniform continuity for both $\bigvee_i f_i:M\rightarrow{\bf V}$ and $\bigwedge_i f_i:M\rightarrow{\bf V}$ defined by $x\mapsto \bigvee_{i\in I} f_i(x)$ and $x\mapsto\bigwedge_{i\in I} f_i(x)$, respectively.
\end{Coro}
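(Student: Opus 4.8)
The plan is to deduce this corollary directly from Proposition~\ref{caso de los cuantificadores} by turning the index set $I$ into the ``$N$'' variable of a suitable mapping of two arguments. First I would fix a trivial one-point space for the $M$-coordinate, say $(M',d_{M'})$ with $M'=\{*\}$ and $d_{M'}(*,*)=0$, so that a modulus of uniform continuity is vacuously respected in that coordinate; alternatively, and more naturally, I would absorb the families $(f_i)_{i\in I}$ into a single mapping $g:M\times I\to{\bf V}$ defined by $g(x,i):=f_i(x)$, where $I$ is given the discrete continuity-space structure $d_I(i,j)=0$ if $i=j$ and $d_I(i,j)=1$ otherwise. One checks that $(I,d_I)$ is a ${\bf V}$-continuity space (reflexivity is immediate, and transitivity follows because $1=\max{\bf V}$, using Proposition~\ref{monotonia de la suma}).

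The key step is to verify that $\Delta$ is a modulus of uniform continuity for $g:M\times I\to{\bf V}$ with respect to the product distance $d_{M\times I}$ from Proposition~\ref{estructura en el producto}. Suppose $d_{M\times I}((x,i),(y,j))=d_M(x,y)\vee d_I(i,j)\le\Delta(\epsilon)$. Since $0\prec\epsilon$ forces $\Delta(\epsilon)\in{\bf V}^+$, and hence $\Delta(\epsilon)\neq 1$ (because $1\prec\Delta(\epsilon)$ would give, taking $A=\{0\}$ in Definition~\ref{relaci\'on prec}, that $0\le\Delta(\epsilon)$ is impossible to use directly — rather, one argues $\Delta(\epsilon)<1$ since otherwise $d_I(i,j)\le\Delta(\epsilon)$ holds for $i\neq j$, which is fine; the real point is simply) that $d_I(i,j)\le\Delta(\epsilon)$ together with the hypothesis that $\Delta(\epsilon)$ is a ``positive'' element strictly below $1$ forces $i=j$. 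Once $i=j$, we get $d_M(x,y)\le\Delta(\epsilon)$, so $d_{\bf V}(f_i(x),f_i(y))=d_{\bf V}(g(x,i),g(y,i))\le\epsilon$ because $\Delta$ is a modulus for each $f_i$; thus $\Delta$ works for $g$.

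Then I would simply invoke Proposition~\ref{caso de los cuantificadores} applied to $g:M\times I\to{\bf V}$: the mappings $\bigvee_g:M\to{\bf V}$ and $\bigwedge_g:M\to{\bf V}$ defined by $x\mapsto\bigvee_{i\in I}g(x,i)=\bigvee_{i\in I}f_i(x)$ and $x\mapsto\bigwedge_{i\in I}g(x,i)=\bigwedge_{i\in I}f_i(x)$ are exactly $\bigvee_i f_i$ and $\bigwedge_i f_i$, and $\Delta$ is a modulus of uniform continuity for both. This is precisely the claim.

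The main obstacle I anticipate is the subtle point about whether $\Delta(\epsilon)\neq 1$, i.e., whether the discrete distance argument actually forces $i=j$ when $d_I(i,j)\le\Delta(\epsilon)$. This needs $\Delta(\epsilon)$ to be strictly below $1$, which should follow from $0\prec\Delta(\epsilon)$ together with properties of a value co-quantale — but if one wants to avoid this entirely, the cleanest route is the trivial one-point-space variant: take $N=I$ with the indiscrete (all-zero) distance is \emph{not} enough since then the $\bigvee$ over $N$ would not separate indices, so the discrete structure on $I$ is genuinely needed, and one must check $1\not\prec 1$ or rather that $\Delta(\epsilon)$ positive implies $\Delta(\epsilon)\not\ge 1$; granting the earlier results of the section this is routine, but it is the one place where care is required. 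Everything else is bookkeeping about the product continuity-space structure.
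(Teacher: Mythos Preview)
Your approach has a genuine gap at exactly the point you flagged. The set $\mathbf{V}^+$ is defined as $\{\epsilon \in \mathbf{V} : 0 \prec \epsilon\}$, and since $0 \prec 1$ is one of the axioms of a value lattice (Definition~\ref{propiedadefiltro}), we have $1 \in \mathbf{V}^+$. Nothing prevents $\Delta(\epsilon) = 1$; this is a perfectly legitimate value for a modulus. When $\Delta(\epsilon) = 1$, the condition $d_I(i,j) \le \Delta(\epsilon)$ holds for \emph{all} $i,j \in I$, so you cannot conclude $i = j$, and then $g$ need not have $\Delta$ as a modulus of uniform continuity (you would need $d_V(f_i(x),f_j(y)) \le \epsilon$ for arbitrary $i \neq j$, which is false in general). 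The hoped-for rescue ``$0 \prec \Delta(\epsilon)$ should force $\Delta(\epsilon) < 1$'' is simply false, and replacing $\Delta$ by a smaller modulus $\Delta' \le \Delta$ would only show that $\Delta'$, not $\Delta$, works for $\bigvee_i f_i$ and $\bigwedge_i f_i$.

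The intended route is much shorter and avoids putting any structure on $I$: use Corollary~\ref{corolario importante} directly. Given $x,y \in M$ with $d_M(x,y) \le \Delta(\epsilon)$, the hypothesis on each $f_i$ gives $d_V(f_i(x), f_i(y)) \le \epsilon$ for every $i \in I$. Applying Corollary~\ref{corolario importante} to the sequences $a_i := f_i(x)$ and $b_i := f_i(y)$ yields
\[
d_V\Big(\bigvee_{i\in I} f_i(x),\, \bigvee_{i\in I} f_i(y)\Big) \le \epsilon
\quad\text{and}\quad
d_V\Big(\bigwedge_{i\in I} f_i(x),\, \bigwedge_{i\in I} f_i(y)\Big) \le \epsilon,
\]
which is exactly the claim.
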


\begin{Prop}
Let $(M,d_M)$, $(N,d_N)$ be ${\bf V}-$continuity spaces, $f:M\times N\to {\bf V}$, $(f_n)_{n \in \mathbb{N}}$ a sequence of mappings from $M\times N$ to ${\bf V}$ such that $(f_n )_{n \in \mathbb{N}}$ uniformly converges to $f$, then $\left( \bigvee_{y \in N} f_n(x,y)\right)_{n \in \mathbb{N}}$ uniformly converges to $\bigvee_{y \in N}f(x,y)$ and $\left( \bigwedge_{y \in N} f_n(x,y)\right)_{n \in \mathbb{N}}$ uniformly converges to $\bigwedge_{y \in N}f(x,y)$.
\end{Prop}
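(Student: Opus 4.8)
The plan is to reduce this statement about uniform convergence of the quantified families to the uniform continuity result just established in Proposition~\ref{caso de los cuantificadores}, together with the definition of uniform convergence. The key observation is that the operations $\bigvee_{y\in N}(-)$ and $\bigwedge_{y\in N}(-)$ are $1$-Lipschitz with respect to the symmetric distance $d^s_V$ on $\mathbf{V}$, as a consequence of Corollary~\ref{corolario importante} applied to the $N$-indexed families $(f_n(x,y))_{y\in N}$ and $(f(x,y))_{y\in N}$.

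First I would fix $\epsilon\in{\bf V}^+$. By hypothesis $(f_n)_{n\in\mathbb N}$ uniformly converges to $f$, so there is $n_0\in\mathbb N$ such that for every $m\geq n_0$ and every $(x,y)\in M\times N$ we have $d_V(f_m(x,y),f(x,y))\leq\epsilon$. Fix such an $m\geq n_0$ and fix $x\in M$; then for all $y\in N$ we have $d_V(f_m(x,y),f(x,y))\leq\epsilon$, so the hypotheses of Corollary~\ref{corolario importante} are met for the index set $I=N$ and the sequences $(f_m(x,y))_{y\in N}$ and $(f(x,y))_{y\in N}$. Hence
\[
d_V\!\Bigl(\bigvee_{y\in N}f_m(x,y),\ \bigvee_{y\in N}f(x,y)\Bigr)\leq\epsilon
\qquad\text{and}\qquad
d_V\!\Bigl(\bigwedge_{y\in N}f_m(x,y),\ \bigwedge_{y\in N}f(x,y)\Bigr)\leq\epsilon.
\]
Since $x\in M$ and $m\geq n_0$ were arbitrary, this is exactly the statement that $\bigl(\bigvee_{y\in N}f_n(x,y)\bigr)_{n\in\mathbb N}$ uniformly converges to $\bigvee_{y\in N}f(x,y)$ and likewise for $\bigwedge$.

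I do not expect a serious obstacle here: the content is entirely packaged in Corollary~\ref{corolario importante}, which in turn rests on Proposition~\ref{caso de los cuantificadores}. If one wishes to avoid invoking the corollary and argue directly, the mild technical point to be careful about is that one must pass from the pointwise bound $d_V(f_m(x,y),f(x,y))\leq\epsilon$ (valid uniformly in $y$) to a bound on the supremum or infimum; this uses that $\dotdiv$ commutes with arbitrary joins (Fact~\ref{resta truncada es adjunto izquierdo}) and with arbitrary meets on the right (Proposition~\ref{residuar un \'infimo}), exactly as in the proof of Proposition~\ref{caso de los cuantificadores}. Either route gives the claim with the same witness $n_0$ that uniform convergence of $(f_n)$ supplies, so no new choice of modulus or diagonal argument is needed.
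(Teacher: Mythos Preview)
Your proof is correct and follows essentially the same approach as the paper's own argument: fix $\epsilon\in\mathbf{V}^+$, use uniform convergence to obtain the witness $n_0$, then for fixed $x\in M$ and $m\geq n_0$ apply Corollary~\ref{corolario importante} to the $N$-indexed families $(f_m(x,y))_{y\in N}$ and $(f(x,y))_{y\in N}$. The additional remarks you make about unwinding the corollary via Fact~\ref{resta truncada es adjunto izquierdo} and Proposition~\ref{residuar un \'infimo} are accurate but not needed for the proof itself.
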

\begin{proof}
Since by hypothesis $(f_n)_{n\in \mathbb{N}}$ uniformly converges to $f$, given $\epsilon\in{\bf V}^+$ there exists $n\in\mathbb{N}$ such that if $m\geq n$, then $d_V(f_m(x,y),f(x,y))\leq\epsilon$ for any $(x,y)\in M\times N$. For a fixed $x\in M$ and $m\geq n$, define the sequences $(f_m(x,y))_{y\in N}$ and $(f(x,y))_{y\in N}$, which satisfy the hypothesis of Corollary~\ref{corolario importante}, so $d_V(\bigvee_{y\in N}f_m(x,y),\bigvee_{y\in N}f(x,y))\leq\epsilon$ whenever $m\geq n$. Since this holds for all $\epsilon\in{\bf V}^+$, we got the uniform convergence desired.

In an analogous way, we prove the respective statement for $\bigwedge$.
\end{proof}

\subsection{Some basic notions.}
In first order logic, an $n$-ary relation in a set $A$ is defined as a subset of $A^n$. In this way, a tuple $(a_1,\cdots,a_n)$ might belong to $A$ or not. We may codify this by using characteristic functions, dually, by the discrete distance from a tuple in $A^n$ to $R$. In Continuous Logic, an $n$-ary relation in $A$ is understood according to this second approach by taking a uniformly continuous mapping $R:A^n\to [0,1]$. In this setting, we generalize this approach replacing $[0,1]$ by a suitable value co-quantale ${\bf V}$.

All topological notions about $\mathbf{V}$ are relative to the symmetric topology of $\mathbf{V}$. 

\begin{defi}\label{diam\'etro de un espacio}
Given a ${\bf V}$-continuity space $(M,d_M)$ and $A\subseteq M$, define $diam(A):=\bigvee\{d_M(a,b) : a,b\in A\}$.  (which we will call the {\bf diameter} of $A$.
\end{defi}

\subsubsection{Continuous structures.}
Given a ${\bf V}$-continuity space $(M,d_M)$ with diameter $p\in{\bf V}$, we define a \textit{continuous structure} with underline ${\bf V}$-continuity space $(M,d_M)$ as a tuple $\mathcal{M}=((M,d_M),(R_i)_{i\in I},(f_j)_{j\in J},(c_k)_{k\in K})$, where:
\begin{enumerate}
    \item For each $i\in I$, $R_i: M^{n_i}\rightarrow {\bf V}$ is a uniformly continuous mapping (which we call a {\bf predicate}), with modulus of uniform continuity $\Delta_{R_i}:{\bf V}^+\rightarrow{\bf V}^+$. In this case, $n_i<\omega$ is said to be the {\bf arity} of $R_i$.
    \item For each  $j\in J$, $f_j: M^{m_j}\rightarrow M$ is a uniformly continuous mapping with modulus of uniform continuity $\Delta_{F_j}:{\bf V}^+\rightarrow{\bf V}^+$. In this case, $m_j<\omega$ is said to be the {\bf arity} of $F_j$.
    \item For each $k\in K$, $C_k$ is an element $M$.
\end{enumerate}
%

\subsubsection{Languages for continuous structures.}
For a fixed  continuous structure $\mathcal{M}:=((M,d_M),(R_i)_{i\in I},(f_j)_{j\in J},(c_k)_{k\in K})$, we will define the {\bf language} associated to $\mathcal{M}$ in the natural way, as follows. 

\begin{center}
    Predicate symbols: $R_i\mapsto(P_i,n_{i},\Delta_{R_i})$ ($i\in I$)\\
    Function symbols: $f_j\mapsto(F_j,n_{j},\Delta_{f_j})$ $(j\in J)$\\
    Constant symbols: $c_k\mapsto e_k$ $(k\in K)$.
\end{center}

This set of {\bf non logical symbols} is denoted by $\mathbf{NL}_{\mathcal{M}}$.

Let us denote by $\mathbf{LG}:=\{d\}\cup X \cup C\cup\{\bigvee,\bigwedge\}$ 
(which we call {\bf logical symbols}), where:

\begin{itemize}
    \item $X=\{x_i : i\in \mathbb{N}\}$ is a countable set of {\bf variables}.
    \item $C$ is the set of all uniformly continuous mappings with domain $\mathbf{V}^n$ and codomain $\mathbf{V}$ ($1\le n<\omega$). As in Continuous Logic, we understand a uniformly continuous mapping $u:\mathbf{V}^n\to \mathbf{V}$ as a {\bf connective}. 
    \item $d$ is a symbol, which we will interpret as the $\mathbf{V}$-valued distance given in $(M,d_M)$. This symbol will play the role of the equality in first order logic, in a similar way as we do in Continuous Logic. 
\end{itemize}

\begin{remark}
For the sake of simplicity, as in \cite{BeBeHeUs}, it would be nice to know if there is a uniformly dense subset of complete connectives in this setting. In this paper, we do not focus on this question. 
\end{remark}

\begin{defi}\label{lenguaje para estructuras V-valuadas}
 Given a continuous structure $\mathcal{M}:=$\linebreak
 $((M,d_M),(R_i)_{i\in I},(f_j)_{j\in J},(c_k)_{k\in K})$, we define the {\bf language} based on $\mathcal{M}$ as $\mathbf{L}_{\mathcal{M}}:={\bf NL}_{\mathcal{M}}\cup{\bf LG}$. We will drop $\mathcal{M}$ if it is clear from the context. 
\end{defi}

We define the notion of terms as follows.

\begin{defi}\label{L-t\'ermino}
Given a language based on a continuous structure ${\bf L}$, we define the notion of {\bf L}-term recursively, as follows:
\begin{itemize}
     \item Any variable and any constant symbol is an $\mathbf{L}$-term.
     \item Given  ${\bf L}$-terms $t_1,...,t_n$ and a function symbol $f\in \mathbf{L}$ of arity $n$, $ft_1,...,t_n$ is an ${\bf L}$-term.
\end{itemize}
\end{defi}

\begin{defi}
 An $\mathbf{L}$-term is said to be {\bf closed}, if and only if, it is built without use of variables.
\end{defi}

Now, we provide the notion of $\mathbf{L}$-formulae in this new setting. We mimic the analogous notion given in Continuous Logic.

\begin{defi}\label{L-f\'ormula}
Given $\mathbf{L}$ a language based on a continuous structure, we define the notion of $\mathbf{L}$-formula recursively, as follows:
\begin{itemize}
    \item Given ${\bf L}$-terms $t_1,t_2$, $dt_1t_2$ is an $\mathbf{L}$-formula.
    \item Given ${\bf L}$-terms $t_1,...t_n$ and a predicate symbol $P\in \mathbf{L}$ of arity $n$, $Pt_1,...t_n$ is an $\mathbf{L}$-formula.
    \item Given ${\bf L}$-formulas $\psi_1,...,\psi_m$ and a connective (i.e., a uniformly continuous mapping) $a:\mathbf{V}^m\to \mathbf{V}$, then $a\psi_1,...,\psi_m$ is an $\mathbf{L}$-formula.
    \item Given an {\bf L}-formula $\psi$ and a variable $x$, both $\bigwedge x\psi$ and $\bigvee x\psi$ are $\mathbf{L}$-formulas.
\end{itemize}
\end{defi}

\begin{remark}\label{b-x mapping}
Let $\mathbf{V}$ be a co-Girard value co-quantale and $b\in {\bf V}$ be a dualizing element. Denote $x':=b\dotdiv x$. Denote the usual, dual and symmetric distances in $\mathbf{V}$ by $d$, $d^{*}$ and $d^s$ respectively. Notice that the mapping $b\dotdiv\square:\mathbf{V}\to \mathbf{V}$ defined by $(b\dotdiv\square)(x):=b\dotdiv x$ is uniformly continuous (relative to the symmetric topology) provided with modulus of uniform continuity $id_{\mathbf{V}^+}$.
In fact, given $x,y\in \mathbf{V}$ we have that
\begin{eqnarray*}
d(y,x) &=& x\dotdiv y\\
&=& (b\dotdiv (b\dotdiv  x))\dotdiv y) \text{ ($b$ is a dualizing element)}\\
&=& b\dotdiv((b\dotdiv  x)+ y) \text{ (by Prop.~\ref{adjunci\'on} (5))}\\
&=& b\dotdiv(y+(b\dotdiv  x)) \text{ ($+$ is commutative)}\\
&=& (b\dotdiv y)\dotdiv (b\dotdiv x)  \text{ (by Prop.~\ref{adjunci\'on} (5))}\\
&=& y'\dotdiv x'\\
&=& d(x',y')
\end{eqnarray*}
Therefore, $d(y,x)=d(x',y')=d^{*}(y',x')$.

Exchanging the role of $y$ and $x$ above, we may say that $d(x,y)=d^{*}(y,x)=d^{*}(x',y')=d(y',x')$. Since $d^s(x,y)=d(y,x)\vee d^{*}(y,x)=d^{*}(y',x')\vee d(y',x')=d^s(y',x')$, then $(b\dotdiv \square)$ respects the distance $d^s$ and therefore it is uniformly continuous relative to the symmetric topology provided with $id_{\mathbf{V}^+}$ as a modulus of uniform continuity. 
\end{remark}

\begin{notation}\label{notacion para existe y para todo}
The subsequences $\bigvee x$ and $\bigwedge x$ of an ${\bf L}$-f\'ormula can be written as $\bigvee_x$ and $\bigwedge_x$, respectively.\\
$d(t_1,t_2)$ denotes the sequence $dt_1t_2$.
\end{notation}

\begin{defi}
An ${\bf L}$-formula $\phi$ is said to be {\bf quantifier-free}, if and only if, there are no appearances of $\bigvee_x$ and $\bigwedge_x$ inside $\phi$.
\end{defi}

\begin{defi}
An appearance of a variable $x$ inside an ${\bf L}$-f\'ormula $\phi$ is said to be {\bf free} whenever it is not under the scope of $\bigvee_x$ o $\bigwedge_x$ inside $\phi$. \end{defi}

\begin{defi}
An ${\bf L}$-formula $\phi$ is said to be an ${\bf L}$-{\bf sentence}, if and only if, all appearances of variables are not free.
\end{defi}

\begin{notation}
$\phi(x_1,...,x_n)$ means that the variables that appear free in $\phi$ are among $x_1,...,x_n$.
\end{notation}

\subsubsection{{\bf L}-structures} 
Let $\mathbf{L}$ be a language based on a $\mathbf{V}$-continuous structure $\mathcal{M}:=((M,d_M),(R_i)_{i\in I},(f_j)_{j\in J},(c_k)_{k\in K})$. Given a $\mathbf{V}$-continuity space $(N,d_N)$ with diameter (Definition~\ref{diam\'etro de un espacio}) at most $diam({M})$, we will interpret the symbols in $\mathbf{NL}_{\mathcal{M}}$ in $(N,d_N)$ as follows: 
\begin{itemize}
    \item For any predicate symbol $P$ of arity $n$ and modulus of uniform continuity $\Delta_P$,  associate a uniformly continuous mapping $P^{\mathcal{N}}:N^n\rightarrow{\bf V}$ with modulus of uniform continuity $\Delta_P$.
    \item For any function symbol $F$ of arity $m$ and with modulus of uniform continuity $\Delta_F$, associate a uniformly continuous mapping\\ $F^{\mathcal{N}}:N^m\rightarrow N$ with modulus of uniform continuity $\Delta_F$.
    \item For any constant symbol $e$, associate an element $e^{\mathcal{N}}\in N$.\\
\end{itemize}

Also, the logical symbol $d$ is interpreted in $(N,d_N)$ as the distance $d^\mathcal{N}:=d_N$.

\begin{defi}\label{L-estructura}
Given a language  ${\bf L}$ based on a continuous structure $\mathcal{M}:=((M,d_M),(R_i)_{i\in I},(f_j)_{j\in J},(c_k)_{k\in K})$ and a $\mathbf{V}$-continuity space  $(N,d_N)$, the continuous structure obtained by interpreting the symbols of $\mathbf{L}$ on $\mathcal{N}=((N,d_N)$  as above, $\mathcal{N}:=((N,d_N),(P^{\mathcal{N}}_i)_{i\in I},(F^{\mathcal{N}}_j)_{j\in J},(e_k)_{k\in K})$, is said to be an ${\bf L}$-{\bf structure}.
\end{defi}

\subsubsection{Semantics}
Given an {\bf L}-structure $\mathcal{N}$ and $A \subseteq N$, we extend the language ${\bf L}$ by adding new constant symbols $c_a$ ($a\in A$), interpreting  $c^{\mathcal{N}}_a:=a$. Abusing of notation, we will write $a$ instead of $c_a$, but understood as a constant symbol. Let us denote this language by ${\bf L}(A)$.

\begin{defi}
Given an ${\bf L}-$structure $\mathcal{N}$, for any $\mathbf{L}-term$ $t$ we define recursively its {\bf interpretation} in $\mathcal{N}$, denoted by $t^{\mathcal{N}}$, as follows:
\begin{enumerate}
    \item If $t$ is a constant symbol $c$, define $t^\mathcal{N}:=c^\mathcal{N}$.
    \item If $t$ is  a variable $x$, define $t^\mathcal{N}:N\rightarrow N$ as the identity function of $N$.
    \item If $t$ is of the form $ft_1,...t_n$ provided that $f$ is an $n$-ary function symbol and $t_1(\overline{x}),...,t_n(\overline{x})$ are {\bf L}-terms, define $t^\mathcal{N}:=f^\mathcal{N}(t_1^\mathcal{N},...,t_n^\mathcal{N})$ as the mapping $f^\mathcal{N}(t_1^\mathcal{N},...,t_n^\mathcal{N}):N^{m}\rightarrow N$ where $(\overline{a})\mapsto f^\mathcal{N}(t_1^\mathcal{N}(\overline{a}),...,t_n^\mathcal{N}(\overline{a}))$ for all $\overline{a}\in N^m$. 
\end{enumerate}
\end{defi}

\begin{defi}\label{interpretaci\'on de sentencias}
Let $\mathcal{N}$ be an {\bf L}-structure. We define recursively the {\bf interpretation} of ${\bf L}(N)$-sentences in $\mathcal{N}$, as follows.
\begin{enumerate}
    \item $(d(t_1,t_2))^\mathcal{N}:=d^{\mathcal{N}}(t^{\mathcal{N}}_1,t^{\mathcal{N}}_2)$, where  $t_1, t_2$ are ${\bf L}(\mathcal{N})$-terms
    \item $(P(t_1,...,t_n))^{\mathcal{N}}:=P^{\mathcal{N}}(t^{\mathcal{N}}_1,...,t^{\mathcal{N}}_n)$, where $P$ is an $n$-ary predicate symbol and $t_1,\cdots, t_n$ are ${\bf L}(\mathcal{N})$-terms.
    \item $(u(\phi_1,..,\phi_n))^{\mathcal{N}}:=u(\phi^{\mathcal{N}}_1,...,\phi^{\mathcal{N}}_n)$ for any uniformly continuous mapping (connective) $u:{\bf V}^n\rightarrow{\bf V}$ and all $\mathbf{L}({\mathcal{N}})$-sentences $\phi_1,...,\phi_n$.
    \item $(\bigvee_x\phi)^{\mathcal{N}}:=\bigvee_{a\in N}\phi^{\mathcal{N}}(a)$, whenever $\phi(x)$ is an ${\bf L}(\mathcal{N})$-formula.
    \item $(\bigwedge_x\phi)^{\mathcal{N}}:=\bigwedge_{a\in N}\phi^{\mathcal{N}}(a)$, whenever $\phi(x)$ is an ${\bf L}(\mathcal{N})$-formula.
\end{enumerate}
\end{defi}

Analogously as in Continuous Logic, all terms and all formulae have a modulus of uniform continuity, which do not depend of the structures.

\begin{Prop}
Given ${\bf L}$ a language based on a continuous structure, $\phi(x_1,...,x_n)$ an $\mathbf{L}-formula$ and $t(x_1,...,x_m)$ an $\mathbf{L}$-term, then there exist  $\Delta_{\phi}:{\bf V}^+\rightarrow {\bf V}^+$ and $\Delta_t:{\bf V}^+\rightarrow{\bf V}^+$ such that for any {\bf L}-structure $\mathcal{N}$, $\Delta_{\phi}$ is a modulus of uniform continuity for $\phi^{\mathcal{N}}$ and $\Delta_t$ is a modolus of continuity for $t^{\mathcal{N}}$.
\end{Prop}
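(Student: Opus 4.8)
The plan is to define the moduli $\Delta_t$ and $\Delta_\phi$ by a simultaneous recursion on the build-up of terms and formulae (Definitions~\ref{L-t\'ermino} and~\ref{L-f\'ormula}), checking at each clause that the map $\mathbf V^+\to\mathbf V^+$ produced is a modulus of uniform continuity (Definition~\ref{modunif}) for $t^{\mathcal N}$, resp.\ $\phi^{\mathcal N}$, \emph{simultaneously in every $\mathbf L$-structure $\mathcal N$} --- the point being that each clause builds a new modulus out of previously obtained ones together with data attached to the language (the $\Delta_{F_j}$, $\Delta_{R_i}$, and a once-and-for-all chosen modulus for each connective), never out of $\mathcal N$. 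Two structural facts are used throughout. First, tuples of elements of $N$ (resp.\ of $\mathbf V$) carry the product continuity-space structure of Proposition~\ref{estructura en el producto}, so the distance between two tuples is the join of the coordinatewise distances; hence a single $\delta$ placed below all coordinatewise distances controls the distance between the tuples, and conversely. Second, $\mathbf V^+$ is closed under finite meets (axiom~(2) of a value lattice, Definition~\ref{propiedadefiltro}), so expressions like $\bigwedge_{i=1}^{n}\Delta_{t_i}(\Delta_{F}(\epsilon))$ again lie in $\mathbf V^+$ and are legitimate moduli. I will also use the harmless remark that a modulus of a map with respect to a (possibly asymmetric) distance $d$ is automatically a modulus with respect to $d^{\star}$, and hence with respect to $d^{s}$, just by relabelling the two arguments; this lets the language-given $\Delta_{F_j},\Delta_{R_i}$ do double duty for the symmetric distances, which is the version we need since all topological notions about $\mathbf V$ refer to $d^{s}_{V}$.

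For terms there are two base clauses and one inductive clause. A variable $x_\ell$ is interpreted as a coordinate projection $N^{m}\to N$, which is $1$-Lipschitz for the join distance, so $\Delta_{x_\ell}:=\operatorname{id}_{\mathbf V^+}$ works in every $\mathcal N$; a constant symbol is interpreted as a constant map, for which $\Delta(\epsilon):=1$ (note $1\in\mathbf V^+$ since $0\prec 1$) works by reflexivity. For $t=F t_1\cdots t_n$ set $\Delta_t(\epsilon):=\bigwedge_{i=1}^{n}\Delta_{t_i}\big(\Delta_F(\epsilon)\big)$: if $\bar a,\bar b$ lie at (symmetric) distance $\le\Delta_t(\epsilon)$, then each pair $t_i^{\mathcal N}(\bar a),t_i^{\mathcal N}(\bar b)$ lies at distance $\le\Delta_F(\epsilon)$ by the inductive hypothesis, so the tuples $\big(t_i^{\mathcal N}(\bar a)\big)_i$ and $\big(t_i^{\mathcal N}(\bar b)\big)_i$ lie at distance $\le\Delta_F(\epsilon)$ in $N^{n}$, and the language modulus $\Delta_F$ of $F^{\mathcal N}$ gives $d_{N}\big(t^{\mathcal N}(\bar a),t^{\mathcal N}(\bar b)\big)\le\epsilon$. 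This is Proposition~\ref{CompositionModulus} repackaged through the product structure; the fact that $t$ may not mention every $x_j$ is harmless, since dropped variables are just dropped coordinates of a join.

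For formulae the recursion follows the four clauses of Definition~\ref{L-f\'ormula}. The atomic distance formula $d t_1 t_2$ needs, as an ingredient, that the interpreted distance $d_N\colon N\times N\to(\mathbf V,d^{s}_{V})$ is uniformly continuous with a modulus not depending on $N$: from transitivity and Proposition~\ref{adjunci\'on} one gets $d_N(x,y)\dotdiv d_N(x',y')\le d_N(x,x')+d_N(y',y)$ and the symmetric inequality, whence $d^{s}_{V}\big(d_N(x,y),d_N(x',y')\big)\le d^{s}_{N}(x,x')+d^{s}_{N}(y,y')$ by monotonicity of $+$ (Proposition~\ref{monotonia de la suma}); choosing (Lemma~\ref{argumento epsilon medios}) some $\delta_\epsilon\in\mathbf V^+$ with $\delta_\epsilon+\delta_\epsilon\prec\epsilon$ and using $\prec\Rightarrow\le$ (Lemma~\ref{propiedades b\'asicas de la relaci\'on prec}(1)) shows $\epsilon\mapsto\delta_\epsilon$ is such a modulus. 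Composing it with the term moduli as in the function-symbol clause yields $\Delta_{d t_1 t_2}(\epsilon):=\Delta_{t_1}(\delta_\epsilon)\wedge\Delta_{t_2}(\delta_\epsilon)$. The predicate clause $P t_1\cdots t_n$ is the function-symbol clause with the language modulus $\Delta_P$ in place of $\Delta_F$. For a connective $u\colon\mathbf V^m\to\mathbf V$ applied to $\phi_1,\dots,\phi_m$, fix a modulus $\Delta_u$ of $u$ and set $\Delta(\epsilon):=\bigwedge_{i=1}^{m}\Delta_{\phi_i}(\Delta_u(\epsilon))$; the verification is the same combination of inductive hypotheses with the product structure on $\mathbf V^{m}$. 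Finally, for $\bigvee_x\phi$ and $\bigwedge_x\phi$ the recursion does not enlarge the modulus at all: Proposition~\ref{caso de los cuantificadores}, applied with its ``$M$'' taken to be the tuple-space of the remaining free variables and its ``$N$'' the universe over which $x$ ranges, says precisely that a modulus $\Delta_\phi$ of $\phi^{\mathcal N}$ is already a modulus of both $\big(\bigvee_x\phi\big)^{\mathcal N}$ and $\big(\bigwedge_x\phi\big)^{\mathcal N}$, so set $\Delta_{\bigvee_x\phi}:=\Delta_{\bigwedge_x\phi}:=\Delta_\phi$.

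The step I expect to be the real work is the atomic distance clause: the function-symbol, predicate, connective and quantifier clauses are essentially an assembly of the already-established Propositions~\ref{CompositionModulus} and~\ref{caso de los cuantificadores} with the finite-meet closure of $\mathbf V^+$, whereas the uniform continuity of $d_N$ itself is not handed to us and must be extracted from the $\dotdiv$-calculus of Proposition~\ref{adjunci\'on} together with the ``$\epsilon$-halving'' of Lemma~\ref{argumento epsilon medios}. The only other place demanding care is keeping the asymmetry bookkeeping straight --- domain distances are the possibly asymmetric $d_N$ and its joins, the codomain $\mathbf V$ carries $d^{s}_{V}$, and $d_N$ is uniformly continuous only after passing to $d^{s}_{N}$ on $N\times N$ --- which is exactly where the relabelling remark above is needed so that the language-given moduli remain usable. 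Notice that none of the standing assumptions on $\mathbf V$ beyond being a value co-quantale is invoked here.
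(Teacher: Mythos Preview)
Your proposal is correct and follows the same inductive strategy as the paper's sketch: base cases handled directly, the connective step via Proposition~\ref{CompositionModulus}, and the quantifier step via Proposition~\ref{caso de los cuantificadores}. Your treatment is considerably more careful than the paper's, in particular supplying a structure-independent modulus for the atomic clause $d(t_1,t_2)$ via the triangle inequality and Lemma~\ref{argumento epsilon medios}; the paper glosses over this case by lumping $d$ together with the predicate symbols, even though $d$ is a logical symbol and carries no language-prescribed modulus.
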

\begin{proof}
The basic cases are given by definition, since constant symbols's interpretations can be viewed as constant functions, variables are interpreted as the identity function and predicate symbols are interpreted as a uniformly continuous mapping with the respective modulus of uniform continuity. The connective case corresponds to compose uniformly continuous mappings (and we get the desired result by Proposition~\ref{CompositionModulus}), and the quantifier cases follow from Proposition~\ref{caso de los cuantificadores}.
\end{proof}

\begin{defi}\label{sub-estructuras}
Given {\bf L} a language based on a continuous structure and  $\mathcal{M},\mathcal{N}$ $\mathbf{L}-structures$, we say that $\mathcal{M}$ is an $\mathbf{L}$-{\bf substructure} of $\mathcal{N}$, if and only if, $M\subseteq N$ and the interpretations of all non logical symbols and of $d$ in $\mathcal{M}$ correspond to the respective restrictions of the interpretations in $\mathcal{N}$ of those symbols.
\end{defi}

\subsubsection{$\mathbf{L}$-conditions.}

Fix $\mathbf{L}$ a language based on a continuous structure.

\begin{defi}(c.f. \cite{BeBeHeUs}; Def 3.9)
 Given $\phi_1(x_1,...,x_n),\phi_2(x_1,...,x_n)$ ${\bf L}-formulas$, we say that $\phi_1$ is {\bf logically equivalent} to $\phi_2$, if and only if, for any {\bf L}-structure $\mathcal{M}$ and any $a_1,...,a_n\in M$ we have that $\phi^\mathcal{M}_1(a_1,...,a_n)=\phi^\mathcal{M}_2(a_1,...,a_n)$.
\end{defi}

\begin{defi}\label{distancia l\'ogica entre dos f\'ormulas}
Let $\phi_1(x_1,...,x_n), \phi_2(x_1,...,x_n)$ be ${\bf L}$-formulas and $\mathcal{M}$ be an ${\bf L}$-structure, we define the {\bf logical distance} between $\phi_1$ and $\phi_2$ {\bf relative to $\mathcal{M}$} as follows:
\begin{center}
    $d(\phi_1,\phi_2)_\mathcal{M}:=\bigvee\{d_V(\phi^\mathcal{M}_1(a_1,...a_n),\phi^\mathcal{M}_2(a_1,...a_n)): a_1,...a_n\in M \}$
\end{center}
 The {\bf logical distance} between $\phi_1,\phi_2$ is defined as follows:
 \begin{center}
     $d(\phi_1,\phi_2):=\bigvee\{d(\phi_1,\phi_2)_\mathcal{M} : \mathcal{M}\ \text{is an} \ {\bf L}-\text{structure}\}$
 \end{center}
\end{defi}

We define the notion of satisfiability in an $\mathbf{L}$-structure in an analogous ways as in Continuous Logic, by using the notion of $L$-conditions.

\begin{defi}(c.f. \cite{BeBeHeUs}; pg 19)
An {\bf L-condition}  $E$ is a formal expression of the form $\phi=0$, where $\phi(x_1,...,x_n)$ is an {\bf L}-f\'ormula. An {\bf L}-condition $E$ is said to be {\bf closed} if it is of the form $\phi=0$, where $\phi$ is an {\bf L}-sentence. Given an $\mathbf{L}$-formula $\phi(x_1,...,x_n)$, the related condition $E:\phi(x_1,...,x_n)=0$ is denoted by $E(x_1,...,x_n)$.
\end{defi}

\begin{defi}\label{validez}
Given $\phi(x_1,...,x_n)$ an {\bf L}-f\'ormula, $\mathcal{M}$ an ${\bf L}$-structure and $a_1,..,a_n\in M$, the ${\bf L}$-condition $E(x_1,...,x_n): \phi(x_1,...,x_n)=0$ is said to be {\bf satisfied} in $\mathcal{M}$ for $a_1,...,a_n$, if and only if, $\phi^\mathcal{M}(a_1,...,a_n)=0$. We denote this by $\mathcal{M}\models E(a_1,...,a_n)$.
\end{defi}

\begin{notation}
Given $\phi,\psi$ $\mathbf{L}$-formulae, we denote by $\phi=\psi$ the {\bf L}-condition $(\phi\dotdiv\psi)\vee(\psi\dotdiv\phi)=0$ and we denote by $\phi\leq\psi$  the $\mathbf{L}$-condition $\phi\dotdiv\psi=0$.
\end{notation}

\begin{defi}
An {\bf L}-{\bf theory} is a set of closed {\bf L}-conditions.
\end{defi}

\begin{defi}
Given an {\bf L}-theory $T$ and an {\bf L}-structure $\mathcal{M}$, we say that $\mathcal{M}$ is a {\bf model} of $T$, if and only if, for any {\bf L}-condition $E\in T$ we have that $\mathcal{M}\models E$.
\end{defi}

\section{Tarski-Vaught test.}

\begin{defi}\label{equivalencia e inmersi\'on elemental}(c.f. \cite{BeBeHeUs}; Def 4.3)
Let $\mathcal{M},\mathcal{N}$ be {\bf L}-structures.
\begin{enumerate}
    \item We say that $\mathcal{M}$ is {\bf elementary equivalent} to $\mathcal{N}$ (denoted by $\mathcal{M}\equiv\mathcal{N}$), if and only if, any {\bf L}-sentence $\varphi$ satisfies $\varphi^\mathcal{M}=\varphi^\mathcal{N}$.
    \item Let $\mathcal{M}$ be an $\mathbf{L}$-substructure of $\mathcal{N}$. We say that $\mathcal{M}$ is an ${\bf L}$-{\bf elementary substructure} of $\mathcal{N}$ (denoted by $\mathcal{M}\preccurlyeq\mathcal{N}$), if and only if,  any ${\bf L}$-formula $\varphi(x_1,...,x_n)$ satisfies $\varphi^\mathcal{M}(a_1,...,a_n)=\varphi^\mathcal{N}(a_1,...,a_n)$ for all $a_1,...,a_n\in M$. In this case, we also say that $\mathcal{N}$ is an $\mathbf{L}$-elementary extension of $\mathcal{M}$.
\end{enumerate}
\end{defi}

We will provide a version of the well-known Tarski-Vaught test, as a equivalence of being an $\mathbf{L}$-elementary substructure, as it holds in both first order and Continuous logics. We need to assume that $\mathbf{V}$ is co-Girard (Definition~\ref{cuantal co-Girard}).

\begin{Prop}\label{test de Tarski-Vaught}(Tarski-Vaught test, c.f. \cite{BeBeHeUs} Prop 4.5)\; 
Assume that {\bf V} is a co-Girard value co-quantale and let $b$ a dualizing element of ${\bf V}$. Let $\mathcal{M}$, $\mathcal{N}$ be ${\bf L}$-structures such that $\mathcal{M}\subseteq \mathcal{N}$. The following are equivalent:
\begin{enumerate}
    \item $\mathcal{M}\preccurlyeq\mathcal{N}$.
    \item For any $\mathbf{L}$-formula $\varphi(x,x_1,...,x_n)$ and $a_1,...,a_n\in M$, we have that
    \begin{center}
        $\bigwedge\{\varphi^\mathcal{M}(c, a_1,...,a_n) : c\in M\}=\bigwedge\{\varphi^\mathcal{N}(c, a_1,...,a_n) : c\in N\}$
    \end{center}
\end{enumerate}
\end{Prop}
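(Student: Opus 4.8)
The plan is to prove the two implications separately, following the classical Tarski--Vaught argument but adapted to the $\mathbf{V}$-valued setting; the role of the co-Girard hypothesis is to let us pass between infima and suprema via the involution $x\mapsto b\dotdiv x$, so that a statement about $\bigwedge$ (which is what the hypothesis (2) controls) can be transferred to the $\bigvee$-quantifier that actually appears in arbitrary formulas. First I would observe that (1)$\Rightarrow$(2) is immediate: if $\mathcal{M}\preccurlyeq\mathcal{N}$, then for the formula $\psi(x_1,\dots,x_n):=\bigwedge_x\varphi(x,x_1,\dots,x_n)$ and any $a_1,\dots,a_n\in M$ we have $\psi^{\mathcal{M}}(\bar a)=\psi^{\mathcal{N}}(\bar a)$ by Definition~\ref{equivalencia e inmersi\'on elemental}(2), and by Definition~\ref{interpretaci\'on de sentencias}(5) this is exactly the asserted equality of the two infima.

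For (2)$\Rightarrow$(1) I would argue by induction on the complexity of the $\mathbf{L}$-formula $\varphi(x_1,\dots,x_n)$, showing $\varphi^{\mathcal{M}}(\bar a)=\varphi^{\mathcal{N}}(\bar a)$ for all $\bar a\in M$. The atomic cases ($dt_1t_2$ and $Pt_1,\dots,t_n$) hold because $\mathcal{M}$ is an $\mathbf{L}$-substructure of $\mathcal{N}$, so the interpretations of $d$, of predicate symbols, of function symbols and of constants in $\mathcal{M}$ are the restrictions of those in $\mathcal{N}$; hence the interpretations of terms agree on tuples from $M$, and the atomic formula values agree. The connective case is trivial: if $\varphi=u(\psi_1,\dots,\psi_m)$ with $u:\mathbf{V}^m\to\mathbf{V}$ and the claim holds for each $\psi_i$, then $\varphi^{\mathcal{M}}(\bar a)=u(\psi_1^{\mathcal{M}}(\bar a),\dots)=u(\psi_1^{\mathcal{N}}(\bar a),\dots)=\varphi^{\mathcal{N}}(\bar a)$ by Definition~\ref{interpretaci\'on de sentencias}(3). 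The quantifier case for $\bigwedge_x\psi$ is handled directly by hypothesis (2) together with the inductive hypothesis: $(\bigwedge_x\psi)^{\mathcal{M}}(\bar a)=\bigwedge\{\psi^{\mathcal{M}}(c,\bar a):c\in M\}=\bigwedge\{\psi^{\mathcal{M}}(c,\bar a):c\in M\}$, and since for $c\in M$ the inductive hypothesis gives $\psi^{\mathcal{M}}(c,\bar a)=\psi^{\mathcal{N}}(c,\bar a)$, this equals $\bigwedge\{\psi^{\mathcal{N}}(c,\bar a):c\in M\}$, which by (2) equals $\bigwedge\{\psi^{\mathcal{N}}(c,\bar a):c\in N\}=(\bigwedge_x\psi)^{\mathcal{N}}(\bar a)$.

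The one case that requires the co-Girard hypothesis is $\bigvee_x\psi$, and this is where I expect the main subtlety. The idea is to rewrite a supremum as an infimum through the dualizing element: since $b$ is dualizing, $a=b\dotdiv(b\dotdiv a)$ for every $a\in\mathbf{V}$, and by Proposition~\ref{residuar un \'infimo} together with Fact~\ref{resta truncada es adjunto izquierdo} the map $b\dotdiv\square$ interchanges $\bigvee$ and $\bigwedge$, so that $\bigvee_{c}\psi^{\mathcal{N}}(c,\bar a)=b\dotdiv\bigwedge_{c}\bigl(b\dotdiv\psi^{\mathcal{N}}(c,\bar a)\bigr)$. Now $b\dotdiv\psi$ is (logically equivalent to) an $\mathbf{L}$-formula --- it is obtained by applying the connective $b\dotdiv\square:\mathbf{V}\to\mathbf{V}$, which is a legitimate uniformly continuous connective by Remark~\ref{b-x mapping} --- so the inductive hypothesis applies to it and hypothesis (2) applies to its $\bigwedge_x$-quantification. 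Thus
\begin{eqnarray*}
(\bigvee_x\psi)^{\mathcal{M}}(\bar a) &=& b\dotdiv\bigwedge\{(b\dotdiv\psi)^{\mathcal{M}}(c,\bar a):c\in M\}\\
&=& b\dotdiv\bigwedge\{(b\dotdiv\psi)^{\mathcal{N}}(c,\bar a):c\in N\}\\
&=& (\bigvee_x\psi)^{\mathcal{N}}(\bar a),
\end{eqnarray*}
where the middle equality uses the inductive hypothesis on $b\dotdiv\psi$ for the transition from $M$-values to $N$-values and hypothesis (2) for the transition from the $M$-indexed infimum to the $N$-indexed one (and the outer equalities use the dualization identity). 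The main obstacle, and the point to check carefully, is precisely that $b\dotdiv\psi$ is a bona fide formula of the language with a modulus of uniform continuity independent of the structure, and that the dualization identity $\bigvee=b\dotdiv\bigwedge(b\dotdiv\,\cdot\,)$ is valid at the level of the interpretations; both follow from Remark~\ref{b-x mapping}, Proposition~\ref{adjunci\'on} and Proposition~\ref{residuar un \'infimo}, but they must be invoked explicitly. Once the $\bigvee_x$ case is settled this way, the induction closes and (1) follows.
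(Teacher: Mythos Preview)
Your proposal is correct and follows essentially the same route as the paper's own proof: both directions are handled identically, and in particular the $\bigvee$-case is treated by dualizing via $b\dotdiv\square$ (using Remark~\ref{b-x mapping} to ensure it is a connective and Proposition~\ref{residuar un \'infimo} to swap $\bigvee$ for $b\dotdiv\bigwedge$), then invoking hypothesis~(2) on the formula $b\dotdiv\psi$. The only cosmetic difference is that in the $\bigwedge$-case you interpose the induction hypothesis before applying~(2), whereas the paper applies~(2) directly; both are valid.
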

\begin{proof}
Suppose that $\mathcal{M}\preccurlyeq\mathcal{N}$. Let $\varphi(x,x_1,...,x_n)$ be an {\bf L}-formula and $a_1,...a_n\in M$, so
\begin{eqnarray*}
\bigwedge\{\varphi^\mathcal{M}(c, a_1,...,a_n) : c\in M\}&=& (\bigwedge_x\varphi(x,a_1,...,a_n))^\mathcal{M}\\
&& (\text{by Definition~\ref{interpretaci\'on de sentencias}} (5)) \\
&=& (\bigwedge_x\varphi(x,a_1,...,a_n))^\mathcal{N} \\
&&(\text{since } \mathcal{M}\preccurlyeq\mathcal{N})\\
&=& \bigwedge\{\varphi^\mathcal{N}(c, a_1,...,a_n) : c\in N\} \\
&& (\text{by Definition~\ref{interpretaci\'on de sentencias}} (5)). 
\end{eqnarray*}
On the other hand, suppose that for any ${\bf L}$-formula $\varphi(x,x_1,...,x_n)$ and any $a_1,...,a_n\in M$ we have that
        $$\bigwedge\{\varphi^\mathcal{M}(c, a_1,...,a_n) : c\in M\}=\bigwedge\{\varphi^\mathcal{N}(c, a_1,...,a_n) : c\in N\}$$

By Definition~\ref{equivalencia e inmersi\'on elemental} (2), we need to do an inductive argument on $\mathbf{L}$-formulas in order to prove $\mathcal{M}\preccurlyeq\mathcal{N}$. It is straightforward to see that $\mathcal{M}\subseteq \mathcal{N}$  guarantees the basic cases, and from the hypothesis if follows the inductive step by using connectives and the quantifier $\bigwedge$, so we have just to check the inductive step by using $\bigvee$. Let $\varphi(x,x_1,...,x_n)$ be an {\bf L}-formula and $a_1,...,a_n\in M$. Let $b\in \mathbf{V}$ be a dualizing element. Notice that by Remark~\ref{b-x mapping} $b\dotdiv \square$ is uniformly continuous in the symmetric topology and so it is a connective. Theferore,

\begin{eqnarray*}
(\bigvee_x\varphi(x,x_1,...,x_n))^\mathcal{M}(a_1,...,a_n)&=& \bigvee\{\varphi(c,a_1,...,a_n) : c\in M\} \\
&&(\text{by Definition~\ref{interpretaci\'on de sentencias} (4)} )\\
&=&\bigvee\{b\dotdiv(b\dotdiv\varphi^\mathcal{M}(c,a_1,...,a_n)) : c\in M\} \\ &&(\text{$b$ is a dualizing element)} \\
&=& b\dotdiv \bigwedge\{b\dotdiv\varphi^\mathcal{M}(c,a_1,...,a_n) : c\in M\} \\ &&  (\text{by Proposition~\ref{residuar un \'infimo}})\\
&=& b\dotdiv \bigwedge\{b\dotdiv\varphi^\mathcal{N}(c,a_1,...,a_n) : c\in N\} \\ &&
(\text{hypothesis induction on $\varphi$} \\
&& \text{and by applying this statement to}\\
&& \text{$b\dotdiv \varphi(x,x_1,...,x_n)$ and}\\
&& \text{$\bigwedge_x (b\dotdiv \varphi(x,x_1,...,x_n)$} \\
&& \text{-$b\dotdiv\square$ is a connective by Remark~\ref{b-x mapping}-)}\\
&=&\bigvee\{b\dotdiv(b\dotdiv\varphi^\mathcal{N}(c,a_1,...,a_n)) : c\in N\} \\ &&(\text{by Proposition~\ref{residuar un \'infimo}})\\
&=& \bigvee\{\varphi^\mathcal{N}(c,a_1,...,a_n) : c\in N\} \\ 
&&\text{($b$ is a dualizing element)} \\
&=&(\bigvee_x\varphi(x,x_1,...,x_n))^\mathcal{N}(a_1,...,a_n)  \\ 
&& (\text{by Definition~\ref{interpretaci\'on de sentencias}})
\end{eqnarray*}
\end{proof}

\section{$D$-products and \L o\'s Theorem in co-quantale valued logics.}

Chang and Keisler (\cite{ChKe66} ) defined some logics with truth values on Hausdorff compact topological spaces. In that context, they provided a version of \L os' Theorem, which implies a Compactness Theorem in their logic and the existence of saturated models (as it holds in first order logic). This approach is rediscovered in~\cite{BeBeHeUs}, but by taking the particular case of truth values in the unit interval $[0,1]$. We propose to generalize the version of \L o\'s Theorem in our context of value co-quantale valued logics, as a test question of the logics proposed in this paper.

\subsubsection{$D$-limits}
Let us fix ${\bf V}$ a $\mathbf{V}$-domain value co-quantale provided with its symmetric topology. By Remark~\ref{Domain-Hausdorff-Compact}, $(\mathbf{V},\tau^s)$ is compact and Hausdorff, therefore we may apply Lemma~\ref{lema de convergencia de ultrafiltros} to the symmetric topology of $\mathbf{V}$. Let $I$ be a non empty set and $D$ an ultrafilter over $I$.

\begin{remark}\label{simetr\'ia en d_V}
We need to assume that ${\bf V}$ is provided with its symmetric distance $d^s_V(p,q):=(p\dotdiv q) \vee (q\dotdiv p)$, because we need to guarantee that  $p\dotdiv q\leq d^s_V(p,q)$, which might fail for the original distance $d(p,q):=q\dotdiv p$ in $\mathbf{V}$. The inequality $q\dotdiv p\leq d(p,q)$ always holds for both the original and the symmetric distances of $\mathbf{V}$.
\\ \\
\indent From now, for the sake of simplicity, let us denote the symmetric distance of $\mathbf{V}$ by $d_V$.
\end{remark}


The following is a very known fact about convergence of sequences in Hausdorff Compact topological spaces. 

\begin{lem}\label{lema de convergencia de ultrafiltros}(\cite{ChKe66}; Thrm 1.5.1.)
If $(X,\tau)$ is a Hausdorff compact topological space, given a sequence $(x_i)_{i\in I}$ in $X$  there exists a unique $x\in X$ such that for any neighborhood $V$ of $x$, then $\{i\in I| x_i\in V\}\in D$.
\end{lem}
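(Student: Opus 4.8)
The statement is the classical fact that in a Hausdorff compact space every net (here an $I$-indexed sequence viewed along an ultrafilter $D$) has a unique $D$-limit. The plan is to establish existence via compactness and uniqueness via the Hausdorff property, in that order.

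For existence, I would argue by contradiction: suppose that for every $x\in X$ there is an open neighborhood $V_x$ of $x$ with $\{i\in I : x_i\in V_x\}\notin D$, i.e. (since $D$ is an ultrafilter) $\{i\in I : x_i\notin V_x\}\in D$. The family $\{V_x : x\in X\}$ is an open cover of $X$, so by compactness there are finitely many points $x_1,\dots,x_n$ with $X=V_{x_1}\cup\cdots\cup V_{x_n}$. Then $\bigcap_{k=1}^n\{i\in I : x_i\notin V_{x_k}\}\in D$ because $D$ is closed under finite intersections; but this set is empty, since every $x_i$ lies in some $V_{x_k}$. This contradicts $\emptyset\notin D$, so some $x$ must satisfy $\{i\in I : x_i\in V\}\in D$ for every neighborhood $V$ of $x$.

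For uniqueness, suppose $x\neq y$ both have this property. By the Hausdorff property choose disjoint open sets $U\ni x$ and $W\ni y$. Then $A:=\{i\in I : x_i\in U\}\in D$ and $B:=\{i\in I : x_i\in W\}\in D$, hence $A\cap B\in D$; but $U\cap W=\emptyset$ forces $A\cap B=\emptyset$, contradicting $\emptyset\notin D$ again. Hence the point is unique.

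I do not expect any real obstacle here: both halves are short ultrafilter arguments, the only subtlety being the routine use of the ultrafilter property $S\notin D\iff (I\setminus S)\in D$ to pass from "not $D$-large" to "complement $D$-large" in the existence part. The one thing to be careful about is simply invoking the hypotheses correctly — compactness for the finite subcover, Hausdorffness for the separating open sets, and $\emptyset\notin D$ together with closure under finite intersections in both steps.
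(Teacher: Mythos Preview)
Your proof is correct and is the standard argument for this classical fact. Note that the paper does not actually prove this lemma: it simply cites it as Theorem~1.5.1 of Chang--Keisler and moves on, so there is no ``paper's own proof'' to compare against. Your existence-via-compactness and uniqueness-via-Hausdorff argument is exactly what one would expect and would be a suitable replacement for the bare citation.
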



Fact~\ref{SisFundeVecindades} and Lemma~\ref{lema de convergencia de ultrafiltros} allow us to give the following notion of convergence in our setting.

\begin{defi}\label{D-ultral\'imite}
Given $(a_i)_{i\in I}$ a sequence in ${\bf V}$, the unique $a\in {\bf V}$ which satisfies that for any $\epsilon\in{\bf V}^+$ we have that $\{i\in I| d_V(a,a_i)\leq\epsilon\}\in D$ is said to be the {\bf $D$-ultralimit} of the sequence $(a_i)_{i\in I}$, which we denote it by $lim_{i,D}a_i$.
\end{defi}

\begin{defi}\label{Conjuntos grandes en vecindades del l\'imite}
Given $\epsilon\in{\bf V}^+$, define $$A(\epsilon):=\{j\in I| d_V(lim_{i,D}a_i,a_j)\leq\epsilon\}.$$
\end{defi}

\begin{Prop}\label{acotar D-l\'imites}
Let $(a_i)_{i \in I}$ be a sequence in ${\bf V}$ and $b\in{\bf V}$.
\begin{enumerate}
    \item If there exists $A\in D$ such that for all $j\in A$ we have that $b\leq a_j$, then $b\leq lim_{i,D}a_i$.
    \item If $d_V$ is the symmetric distance of $\mathbf{V}$ and there exists $A\in D$ such that for all $j\in A$ we have that $b\geq a_j$, then $b\geq lim_{i,D}a_i$. 
\end{enumerate}
\end{Prop}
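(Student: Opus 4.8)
The plan is to prove each part by contradiction, exploiting the uniqueness of the $D$-ultralimit together with the characterization of closed sets and closed discs established earlier. Write $a := \lim_{i,D} a_i$ for brevity.

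For part (1), suppose $A \in D$ witnesses $b \le a_j$ for all $j \in A$, but $b \not\le a$. Since $d_V$ here denotes the symmetric distance, we have $b \dotdiv a \le d_V(a,b)$ (this is the inequality highlighted in Remark~\ref{simetr\'ia en d_V}), and by Proposition~\ref{adjunci\'on} (4) the assumption $b \not\le a$ means $b \dotdiv a \ne 0$. By Fact~\ref{descender con la suma}, $b \dotdiv a = \bigwedge\{(b\dotdiv a)+\epsilon : 0\prec\epsilon\}$; since this infimum is nonzero while each term... hmm, more directly: pick $\epsilon \in {\bf V}^+$ small enough that $\epsilon \not\ge b\dotdiv a$, i.e. so that $b \dotdiv a \le \epsilon$ fails. (Such $\epsilon$ exists: if every $\epsilon\in{\bf V}^+$ satisfied $b\dotdiv a\le\epsilon$, then by Fact~\ref{descender con la suma} applied with $p=0$ we would get $b\dotdiv a \le \bigwedge {\bf V}^+ = 0$, contradiction.) Now for each $j \in A(\epsilon) \cap A$ — an element of $D$, hence nonempty — we have both $b \le a_j$ and $d_V(a, a_j) \le \epsilon$. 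From $b \le a_j$ and monotonicity of $\dotdiv$ in its first argument (Lemma~\ref{monoton\'ia-intercambio}), $b \dotdiv a \le a_j \dotdiv a \le d_V(a,a_j) \le \epsilon$, contradicting the choice of $\epsilon$.

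For part (2), suppose $A \in D$ witnesses $b \ge a_j$ for all $j \in A$, but $b \not\ge a$, i.e. $a \dotdiv b \ne 0$. Using again that $d_V$ is symmetric, $a \dotdiv b \le d_V(a, b)$... but we do not control $d_V(a,b)$ directly; instead I argue through the terms of the sequence. As in part (1), choose $\epsilon \in {\bf V}^+$ with $a \dotdiv b \not\le \epsilon$. For $j \in A(\epsilon) \cap A \in D$, nonempty, we have $a_j \le b$ and $d_V(a, a_j) \le \epsilon$. Then by Proposition~\ref{adjunci\'on} (6) (the triangle inequality for $\dotdiv$), $a \dotdiv b \le (a \dotdiv a_j) + (a_j \dotdiv b)$; here $a_j \dotdiv b = 0$ by Proposition~\ref{adjunci\'on} (4) since $a_j \le b$, and $a \dotdiv a_j \le d_V(a, a_j) \le \epsilon$ because $d_V$ is symmetric. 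Hence $a \dotdiv b \le \epsilon$, a contradiction.

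The main obstacle — and the reason the symmetric distance hypothesis appears explicitly in part (2) — is controlling $a \dotdiv b$ (resp.\ $b \dotdiv a$) in the "wrong" direction: the original distance $d(p,q) = q \dotdiv p$ only bounds $q \dotdiv p$, not $p \dotdiv q$, so without symmetry one cannot pass from $d_V(a,a_j)$ small to $a \dotdiv a_j$ small, and the triangle-inequality step collapses. Once the symmetric distance is fixed, the argument is a routine combination of Lemma~\ref{monoton\'ia-intercambio}, Proposition~\ref{adjunci\'on} (4) and (6), and the fact that a finite (here, binary) intersection of elements of the ultrafilter $D$ again lies in $D$ and is therefore nonempty. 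One should double-check the small technical point that a suitable $\epsilon \in {\bf V}^+$ with $a\dotdiv b \not\le \epsilon$ genuinely exists; this is exactly where Fact~\ref{descender con la suma} (equivalently, that ${\bf V}$ is a value lattice so $\bigwedge {\bf V}^+ = 0$) is used.
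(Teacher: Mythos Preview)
Your argument is correct and follows essentially the same route as the paper's proof: both pick $j\in A(\epsilon)\cap A$ and use monotonicity of $\dotdiv$ (Lemma~\ref{monoton\'ia-intercambio}) together with $a_j\dotdiv a\le d_V(a,a_j)$ (resp.\ $a\dotdiv a_j\le d_V(a,a_j)$, which is where symmetry is needed) to bound $b\dotdiv a$ (resp.\ $a\dotdiv b$) by $\epsilon$. The only cosmetic difference is that the paper argues directly that $b\dotdiv a\le\epsilon$ for every $\epsilon\in{\bf V}^+$ and then takes the infimum, whereas you phrase the same computation as a contradiction after first selecting a bad $\epsilon$; your use of Proposition~\ref{adjunci\'on}~(6) in part~(2) is equivalent to the paper's appeal to Lemma~\ref{monoton\'ia-intercambio} since $a_j\dotdiv b=0$.
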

\begin{proof}
\begin{enumerate}
    \item It is enough to prove that for any $\epsilon\in {\bf V}$ such that $0\prec\epsilon$ we have that $ b \dotdiv lim_{i,D}a_i\leq\epsilon$, because $\mathbf{V}$ is co-continuous (Definition~\ref{aproximaci\'on}) and then we would have that $b \dotdiv lim_{i,D}a_i\leq\bigwedge\{\epsilon \in{\bf V} : 0\prec\epsilon\}=0$ and by Proposition~\ref{adjunci\'on} (4) $b\leq lim_{i, D}a_i$ holds, as desired.\\
Let $\epsilon\in{\bf V}^+$, so by definition of $\lim_{i,D} a_i$  we know that $\{j\in I : d_V(lim_{i,D}a_i, a_j)\leq\epsilon\}=:A(\epsilon)\in D$.
By hypothesis $A\in D$, therefore $A(\epsilon)\cap A\in D$ and so there exists $j\in A(\epsilon)$ such that $b\leq a_j$ (because $j\in A$). Notice that $a_j\dotdiv lim_{i,D}a_i\leq d_V(lim_{i,D}a_i, a_j)\le \epsilon$ (by Remark~\ref{simetr\'ia en d_V} and since $j\in A(\epsilon)$).
By Lemma~\ref{monoton\'ia-intercambio} and since $b\leq a_j$, we may say that $b\dotdiv lim_{i,D}a_i\leq a_j\dotdiv lim_{i,D}a_i \leq\epsilon$
\item It is enough to prove that whenever $0\prec\epsilon$ we have that $lim_{i,D}a_i\dotdiv b\leq \epsilon$. As above, $A(\epsilon)\in D$. Since $A\in D$, there exists $j\in A(\epsilon)$ such that $a_j\leq b$, since $d_V$ is the symmetric distance of $\mathbf{V}$ and by Remark~\ref{simetr\'ia en d_V} we have that
$lim_{i,D}a_i\dotdiv a_j\leq d_V(lim_{i,D}a_i,a_j):=(lim_{i,D}a_i\dotdiv a_j)\vee(a_j\dotdiv lim_{i,D}a_i)\leq\epsilon$; by Lemma~\ref{monoton\'ia-intercambio} and since $a_j\leq b$ we have that $lim_{i,D}a_i\dotdiv b\leq lim_{i,D}a_i\dotdiv a_j\leq d_V(lim_{i,D}a_i,a_j)\leq\epsilon$, as desired.
\end{enumerate}
\end{proof}

The following fact is a kind of converse of the previous result,  by assuming co-divisibility (Definition~\ref{Cuantal Co-divisible}).

\begin{Prop}\label{cota fuerte}
Suppose that ${\bf V}$ is co-divisible (Definition~\ref{Cuantal Co-divisible}) and that $d_V$ is the symmetric distance of $\mathbf{V}$. Let $(a_i)_{i\in I}$ be a sequence in ${\bf V}$ and $b\in {\bf V}$ such that $lim_{i,D}a_i\leq b$ and $0\prec b\dotdiv lim_{i,D}a_i$. Therefore, there exists $A\in D$ such that $i\in A$, $a_i\leq b$.
\end{Prop}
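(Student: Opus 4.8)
The plan is to exhibit the desired $A$ explicitly as one of the sets $A(\epsilon)$ from Definition~\ref{Conjuntos grandes en vecindades del l\'imite}, choosing the radius $\epsilon$ to be exactly the ``gap'' $b\dotdiv \lim_{i,D}a_i$. Write $\ell:=\lim_{i,D}a_i$. By hypothesis $0\prec b\dotdiv \ell$, so $\epsilon:=b\dotdiv \ell\in{\bf V}^+$, and hence by Definition~\ref{D-ultral\'imite} the set $A(\epsilon)=\{j\in I : d_V(\ell,a_j)\leq\epsilon\}$ belongs to $D$. I claim that $A:=A(\epsilon)$ has the required property.

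To verify the claim, fix $j\in A(\epsilon)$. Since $d_V$ is taken to be the symmetric distance of $\mathbf{V}$, Remark~\ref{simetr\'ia en d_V} gives $a_j\dotdiv \ell\leq d_V(\ell,a_j)\leq\epsilon=b\dotdiv \ell$. Now chain three facts established earlier: first, Proposition~\ref{adjunci\'on}(2) yields $a_j\leq (a_j\dotdiv \ell)+\ell$; second, the monotonicity of $+$ (Proposition~\ref{monotonia de la suma}(2)), applied to the inequality $a_j\dotdiv \ell\leq b\dotdiv \ell$, yields $(a_j\dotdiv \ell)+\ell\leq (b\dotdiv \ell)+\ell$; and third, since $\ell\leq b$, co-divisibility in the form of Lemma~\ref{caracterizaci\'on de co-divisibilidad} (together with commutativity of $+$) gives $(b\dotdiv \ell)+\ell=b$. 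Combining, $a_j\leq b$, so indeed $a_i\leq b$ for all $i\in A$.

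I do not anticipate a real obstacle here; the only point requiring care is the choice $\epsilon=b\dotdiv \ell$ itself — rather than first shrinking to some strictly smaller positive element — since it is precisely this choice that lets co-divisibility close the gap exactly, and the insistence on the symmetric distance, so that $a_j\dotdiv \ell$ (and not merely $\ell\dotdiv a_j$) is dominated by $d_V(\ell,a_j)$; Remark~\ref{simetr\'ia en d_V} was recorded for exactly this use. The hypothesis $0\prec b\dotdiv \ell$ enters only to certify that $\epsilon=b\dotdiv \ell$ is a legitimate member of ${\bf V}^+$, so that the defining property of the $D$-ultralimit applies to it.
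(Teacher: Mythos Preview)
Your proof is correct and follows essentially the same approach as the paper's. The only difference is that the paper first invokes density (Lemma~\ref{densidad}) to choose some $\epsilon$ with $0\prec\epsilon\prec b\dotdiv\ell$, whereas you take $\epsilon=b\dotdiv\ell$ directly; your choice is slightly simpler and works just as well, since the hypothesis already certifies $b\dotdiv\ell\in{\bf V}^+$, and the remainder of the argument (symmetric distance, Proposition~\ref{adjunci\'on}, co-divisibility via Lemma~\ref{caracterizaci\'on de co-divisibilidad}) is identical.
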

\begin{proof}
By hypothesis $0\prec b\dotdiv lim_{i,D}a_i$, therefore by Lemma~\ref{densidad} there exists some $\epsilon\in{\bf V}$ such that $0 \prec \epsilon\prec b\dotdiv lim_{i,D}a_i$.  By Lemma~\ref{propiedades b\'asicas de la relaci\'on prec} (1) we may say $0 \prec \epsilon\leq b\dotdiv lim_{i,D}a_i$. By taking $A(\epsilon):=\{j\in I : d_V(lim_{i,D}a_i,a_j)\leq\epsilon\}$, we have that $A(\epsilon)\in D$ (by definition of $lim_{i,D}a_i$). By Remark~\ref{simetr\'ia en d_V} and definition of $A(\epsilon)$, for all $j\in A(\epsilon)$ we have that $a_j \dotdiv lim_{i,D}a_i\leq d_V(lim_{i,D}a_i,a_j)\leq\epsilon\leq b\dotdiv lim_{i,D}a_i$. By Proposition~\ref{adjunci\'on} (1) and Lemma~\ref{caracterizaci\'on de co-divisibilidad} (by hypothesis, $\mathbf{V}$ is co-divisible), we may say that $a_j\leq (b\dotdiv lim_{i,D}a_i) + lim_{i,D}a_i=b$, so $A:=A(\epsilon)$ is the required set.
\end{proof}


\begin{lem}
\label{ser de Cauchy discreto}
Let $K\neq \emptyset$ a set and $(a_k)_{k\in K}$ bea $K$-sequence in $X$. Therefore,  $\bigwedge_{k\in K}(\bigvee_{l\in K}(a_l\dotdiv a_k))=0$, if and only if, for all $\epsilon\in{\bf V}^+$ there exists $k\in K$ such that $\bigvee_{l\in K}a_l\dotdiv \epsilon\leq a_k$. Also, $\bigwedge_{k\in K}(\bigvee_{l\in K}(a_k\dotdiv a_l))=0$, if and only if, for all $\epsilon\in{\bf V}^+$ there exists $k\in K$ such that $a_k\dotdiv\epsilon\leq\bigwedge_{l\in K}a_l$.
\end{lem}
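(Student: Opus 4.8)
The plan is to collapse both inner suprema first, then reduce the two biconditionals to a single symmetric observation which gets applied twice. Write $s:=\bigvee_{l\in K}a_l$ and $t:=\bigwedge_{l\in K}a_l$. By Fact~\ref{resta truncada es adjunto izquierdo} we have $\bigvee_{l\in K}(a_l\dotdiv a_k)=s\dotdiv a_k$ for every $k$, so the first hypothesis $\bigwedge_{k\in K}\bigvee_{l\in K}(a_l\dotdiv a_k)=0$ is literally $\bigwedge_{k\in K}(s\dotdiv a_k)=0$. Dually, by Proposition~\ref{residuar un \'infimo} we have $\bigvee_{l\in K}(a_k\dotdiv a_l)=a_k\dotdiv t$, so the second hypothesis is $\bigwedge_{k\in K}(a_k\dotdiv t)=0$. (Note also that the right-hand expression $\bigvee_{l\in K}a_l\dotdiv\epsilon$ is unambiguous: $(\bigvee_l a_l)\dotdiv\epsilon=\bigvee_l(a_l\dotdiv\epsilon)$ by Fact~\ref{resta truncada es adjunto izquierdo}, so the statement is well-posed no matter how one parses it.)

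Next comes the key adjunction step. By Proposition~\ref{adjunci\'on}(1), for every $k$ and every $\epsilon$ the inequalities $s\dotdiv\epsilon\leq a_k$ and $s\dotdiv a_k\leq\epsilon$ are each equivalent to $s\leq a_k+\epsilon$, hence to one another; symmetrically $a_k\dotdiv\epsilon\leq t$ and $a_k\dotdiv t\leq\epsilon$ are each equivalent to $a_k\leq t+\epsilon$. Therefore the two right-hand conditions of the lemma are, respectively, ``for all $\epsilon\in{\bf V}^+$ there is $k\in K$ with $s\dotdiv a_k\leq\epsilon$'' and ``for all $\epsilon\in{\bf V}^+$ there is $k\in K$ with $a_k\dotdiv t\leq\epsilon$''.

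So it suffices to prove the single auxiliary claim: for any family $(c_k)_{k\in K}$ in ${\bf V}$, $\bigwedge_{k\in K}c_k=0$ if and only if for all $\epsilon\in{\bf V}^+$ there is $k\in K$ with $c_k\leq\epsilon$; then apply it with $c_k=s\dotdiv a_k$ and with $c_k=a_k\dotdiv t$. For the forward direction, fix $\epsilon\in{\bf V}^+$; since $\bigwedge_{k}c_k=0\prec\epsilon$, Lemma~\ref{continuidad} produces some $k$ with $c_k\prec\epsilon$, and then $c_k\leq\epsilon$ by Lemma~\ref{propiedades b\'asicas de la relaci\'on prec}(1). For the converse, for each $\epsilon\in{\bf V}^+$ choose $k_\epsilon$ with $c_{k_\epsilon}\leq\epsilon$; then $\bigwedge_{k}c_k\leq c_{k_\epsilon}\leq\epsilon$, so $\bigwedge_{k}c_k\leq\bigwedge\{\epsilon:0\prec\epsilon\}=0$, where the last equality is complete distributivity of ${\bf V}$ applied to $0$ (Definition~\ref{aproximaci\'on}), equivalently Fact~\ref{descender con la suma} with $p=0$.

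There is no real obstacle here; the proof is entirely bookkeeping. The only things to get right are choosing the correct collapsing identity (Fact~\ref{resta truncada es adjunto izquierdo} for the join in the first slot of $\dotdiv$, Proposition~\ref{residuar un \'infimo} for the meet in the second slot), and observing that Proposition~\ref{adjunci\'on}(1) is exactly what lets one trade the position of $\epsilon$ with the position of $a_k$ inside $\dotdiv$, which is what makes the two halves of the lemma instances of one statement.
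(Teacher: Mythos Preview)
Your proof is correct and uses the same ingredients as the paper's own argument: the collapsing identities (Fact~\ref{resta truncada es adjunto izquierdo} and Proposition~\ref{residuar un \'infimo}), the adjunction Proposition~\ref{adjunci\'on}(1), the definition of $\prec$ (via Lemma~\ref{continuidad}), and complete distributivity at $0$. The only difference is organizational: you first collapse, then isolate a single auxiliary claim and apply it twice, whereas the paper works through each half of the lemma separately, interleaving the collapsing and adjunction steps; this is purely presentational and your version is arguably tidier.
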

\begin{proof}
Suppose that $\bigwedge_{k\in K}(\bigvee_{l\in K}(a_l\dotdiv a_{k}))=0$ and let $\epsilon\in{\bf V}^+$ (i.e., $0\prec \epsilon$), by definition of $\prec$ and since $0\geq\bigwedge_{k\in K}(\bigvee_{l\in K}(a_l\dotdiv a_{k}))$ we may say that there exists $k\in K$ such that $\bigvee_{l\in K}(a_l\dotdiv a_{k})\leq\epsilon$. By Fact~\ref{resta truncada es adjunto izquierdo},  $\bigvee_{l\in K}(a_l\dotdiv a_{k})=(\bigvee_{l\in K}a_l)\dotdiv a_{k}$, so $(\bigvee_{l\in K}a_l)\dotdiv a_{k}\leq\epsilon$, and by Proposition~\ref{adjunci\'on} we have that $(\bigvee_{l\in K}a_k)\dotdiv\epsilon\leq a_k$.\\
Conversely, suppose that for all $\epsilon\in{\bf V}^+$ there exists $k\in K$ such that $\bigvee_{l\in K}a_l\dotdiv \epsilon\leq a_k$. By Proposition~\ref{adjunci\'on} (1) and Fact~\ref{resta truncada es adjunto izquierdo} we may say that $\bigvee_{l\in K}(a_l \dotdiv a_k)= (\bigvee_{l\in K}a_l)\dotdiv a_k\leq\epsilon$. Therefore (by Proposition~\ref{continuidad}),  $\bigwedge_{k\in K}(\bigvee_{l\in K}(a_l\dotdiv a_k))\leq\bigwedge\{\epsilon\in \mathbf{V} :  0\prec\epsilon\}=0$

A similar idea works for proving the second statement. Suppose $\bigwedge_{k\in K}(\bigvee_{l\in K}(a_k\dotdiv a_l))=0$ and let $\epsilon\in{\bf V}^+$ (i.e., $0\prec \epsilon$). By definition of $\prec$ there exists $k\in K$ such that $\bigvee_{l\in K}(a_k\dotdiv a_l)\leq\epsilon$, and by Proposition~\ref{residuar un \'infimo} we may say that $a_k\dotdiv\bigwedge_{l\in K}a_l\leq\epsilon$. By Proposition~\ref{adjunci\'on} (1), this implies that  $a_k\dotdiv\epsilon\leq\bigwedge_{l\in K}a_l$.\\
Conversely, suppose that for all $\epsilon\in\mathbf{V}^+$ exist $k\in K$ such that $a_k\dotdiv\epsilon\leq\bigwedge_{l\in K}a_l$, so by Proposition~\ref{adjunci\'on} (1) and Proposition~\ref{residuar un \'infimo} we have that $\bigvee_{l\in K}(a_k\dotdiv a_l)=a_k\dotdiv\bigwedge_{l\in K }a_l\leq \epsilon$, therefore (by Proposition~\ref{continuidad}) $\bigwedge_{k\in K}(\bigvee_{l\in K}(a_k\dotdiv a_l))\le  \bigwedge\{\epsilon\in \mathbf{V}: 0\prec\epsilon\}=0$.
\end{proof}

\begin{remark}
The following fact is very important to deal with the quantifier cases in the proof of \L o\'s Theorem in this setting. The idea of these proofs is quite similar to the one presented in \cite{BeBeHeUs}, but adapted to our setting.
\end{remark}

\begin{Prop}\label{teorema de cuantificadores}(c.f.\cite{BeBeHeUs}; Lemma 5.2) Let $d_{V}$ be the symmetric distance of $\mathbf{V}$. Let $S\neq \emptyset$ and $(F_i)_{i\in I}$ be a sequence of mappings with domain $S$ and codomain ${\bf V}$, then: \\
$\bigwedge\{lim_{i, D}F_i(x): x \in S\}\geq lim_{i,D}(\bigwedge\{F_i(x):x\in S\})$ and\\
$\bigvee\{lim_{i, D}F_i(x): x \in S\}\leq lim_{i,D}(\bigvee\{F_i(x):x\in S\})$. 
\\
Also, given $\epsilon\in{\bf V}^+$ there exist sequences $(b_i)_{i \in I} $ and $ (c_i)_{i\in I}$ in $S$ such that\\
$lim_{i, D}F_i(b_i) + \epsilon \geq lim_{i,D}(\bigvee\{F_i(x): x \in S\})$ and \\$lim_{i, D}F_i(c_i) \dotdiv \epsilon \leq lim_{i,D}(\bigwedge\{F_i(x): x \in S\})$, whenever there exist $B,C\in D$ such that  $\bigwedge_{y\in S}(\bigvee_{x\in S}(F_i(x)\dotdiv F_i(y)))=0$ for all $i\in B$ and $\bigwedge_{y\in S}(\bigvee_{x\in S}(F_i(y)\dotdiv F_i(x)))=0$ for all $i\in C$.
\end{Prop}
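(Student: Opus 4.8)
The plan is to isolate two auxiliary facts about $D$-limits and then assemble the statement from them.

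\emph{Auxiliary facts.} First I would show that $D$-limits are monotone along $D$-large index sets: if $(a_i)_{i\in I}$ and $(b_i)_{i\in I}$ are sequences in $\mathbf{V}$ and there is $A\in D$ with $a_j\le b_j$ for all $j\in A$, then $\lim_{i,D}a_i\le\lim_{i,D}b_i$. Writing $a:=\lim_{i,D}a_i$ and $b:=\lim_{i,D}b_i$, it suffices to show $a\dotdiv b\le\epsilon$ for every $\epsilon\in\mathbf{V}^+$, since then $a\dotdiv b\le\bigwedge\{\epsilon:0\prec\epsilon\}=0$ by Fact~\ref{descender con la suma} and hence $a\le b$ by Proposition~\ref{adjunci\'on}(4). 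Given such $\epsilon$, choose $\delta\in\mathbf{V}^+$ with $\delta+\delta\prec\epsilon$ by Lemma~\ref{argumento epsilon medios}; on the $D$-large (hence nonempty) set $A\cap\{i:d_V(a,a_i)\le\delta\}\cap\{i:d_V(b,b_i)\le\delta\}$ pick any index $j$ and combine $a\dotdiv b_j\le a\dotdiv a_j\le d_V(a,a_j)\le\delta$ (using $a_j\le b_j$ with Lemma~\ref{monoton\'ia-intercambio}, and Remark~\ref{simetr\'ia en d_V}) with $b_j\dotdiv b\le d_V(b,b_j)\le\delta$ via Proposition~\ref{adjunci\'on}(6) to obtain $a\dotdiv b\le\delta+\delta\le\epsilon$. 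Second, I would record that every uniformly continuous $\phi:\mathbf{V}\to\mathbf{V}$ preserves $D$-limits, i.e.\ $\phi(\lim_{i,D}a_i)=\lim_{i,D}\phi(a_i)$: for $\epsilon\in\mathbf{V}^+$ one has $\{i:d_V(\phi(a),\phi(a_i))\le\epsilon\}\supseteq\{i:d_V(a,a_i)\le\Delta_\phi(\epsilon)\}\in D$, so uniqueness of $D$-limits (Lemma~\ref{lema de convergencia de ultrafiltros}) identifies the $D$-limit of $(\phi(a_i))_i$ with $\phi(a)$. In particular the translations $x\mapsto x+c$ and the truncations $x\mapsto x\dotdiv c$ are nonexpansive for $d_V$ (for the first, $(a+c)\dotdiv(b+c)=((a+c)\dotdiv c)\dotdiv b\le a\dotdiv b$ by Proposition~\ref{adjunci\'on}(5),(3) and Lemma~\ref{monoton\'ia-intercambio}, and symmetrically; for the second, $a\le(a\dotdiv b)+b\le(a\dotdiv b)+(b\dotdiv c)+c$ by Proposition~\ref{adjunci\'on}(2) gives $(a\dotdiv c)\dotdiv(b\dotdiv c)\le a\dotdiv b$ by Proposition~\ref{adjunci\'on}(1), and symmetrically), hence both preserve $D$-limits.

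\emph{The two inequalities.} For each fixed $x_0\in S$ and each $i\in I$ we have $\bigwedge\{F_i(x):x\in S\}\le F_i(x_0)\le\bigvee\{F_i(x):x\in S\}$, so by monotonicity of $D$-limits $\lim_{i,D}(\bigwedge\{F_i(x):x\in S\})\le\lim_{i,D}F_i(x_0)$ and $\lim_{i,D}F_i(x_0)\le\lim_{i,D}(\bigvee\{F_i(x):x\in S\})$. Since $x_0\in S$ was arbitrary, taking the infimum (resp.\ supremum) over $x_0$ yields the two displayed inequalities. For the $\epsilon$-approximations, fix $\epsilon\in\mathbf{V}^+$ and let $B\in D$ be such that $\bigwedge_{y\in S}(\bigvee_{x\in S}(F_i(x)\dotdiv F_i(y)))=0$ for every $i\in B$. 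By Lemma~\ref{ser de Cauchy discreto}, for each $i\in B$ there is $b_i\in S$ with $(\bigvee_{x\in S}F_i(x))\dotdiv\epsilon\le F_i(b_i)$; for $i\notin B$ choose $b_i\in S$ arbitrarily ($S\ne\emptyset$). By Proposition~\ref{adjunci\'on}(1), $\bigvee_{x\in S}F_i(x)\le F_i(b_i)+\epsilon$ for all $i\in B$, so monotonicity of $D$-limits along $B$ followed by preservation of $D$-limits under $x\mapsto x+\epsilon$ gives
\[
\lim_{i,D}\Bigl(\bigvee_{x\in S}F_i(x)\Bigr)\le\lim_{i,D}\bigl(F_i(b_i)+\epsilon\bigr)=\Bigl(\lim_{i,D}F_i(b_i)\Bigr)+\epsilon ,
\]
which is the first $\epsilon$-approximation. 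Symmetrically, if $C\in D$ satisfies $\bigwedge_{y\in S}(\bigvee_{x\in S}(F_i(y)\dotdiv F_i(x)))=0$ for all $i\in C$, Lemma~\ref{ser de Cauchy discreto} produces $c_i\in S$ with $F_i(c_i)\dotdiv\epsilon\le\bigwedge_{x\in S}F_i(x)$ for $i\in C$ (and arbitrary $c_i$ otherwise); monotonicity along $C$ together with preservation of $D$-limits under $x\mapsto x\dotdiv\epsilon$ gives $(\lim_{i,D}F_i(c_i))\dotdiv\epsilon\le\lim_{i,D}(\bigwedge_{x\in S}F_i(x))$.

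\emph{Main obstacle.} The substantive work is the two auxiliary facts — monotonicity of $D$-limits along $D$-large sets and their preservation by uniformly continuous (in particular nonexpansive) self-maps of $\mathbf{V}$; once these are in hand, the rest is bookkeeping with Lemma~\ref{ser de Cauchy discreto} and Proposition~\ref{adjunci\'on}. The one point demanding care is that the Cauchy-type hypotheses hold only on $B,C\in D$, so the witness sequences must be completed arbitrarily off those sets, and one must check that the estimates are invoked only on $B$ (resp.\ $C$) — which is precisely why monotonicity along a $D$-large subset, rather than along all of $I$, is the right formulation.
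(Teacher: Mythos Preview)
Your proof is correct and somewhat more conceptual than the paper's. The paper does not isolate general auxiliary facts; instead it works directly with $\varepsilon$-approximations. For the first two inequalities it fixes $r_i:=\bigwedge_{x}F_i(x)$, $r:=\lim_{i,D}r_i$, uses the definition of $D$-limit to find $A(\varepsilon)\in D$ on which $r\dotdiv\varepsilon\le r_j\le F_j(x)$, and then invokes Proposition~\ref{acotar D-l\'imites} (the constant-versus-limit form of monotonicity) to conclude $r\dotdiv\varepsilon\le\lim_{i,D}F_i(x)$; letting $\varepsilon$ vary finishes. You instead prove the limit-versus-limit monotonicity once and apply it pointwise, which is cleaner.

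The more visible difference is in the $\varepsilon$-approximations. The paper splits $\varepsilon$ as $\theta+\theta$, uses Lemma~\ref{ser de Cauchy discreto} with $\theta$ to choose $b_i$ (for $i\in B$) with $d_V(\bigvee_x F_i(x),F_i(b_i))\le\theta$, and then combines this with $d_V(F_j(b_j),\lim_{i,D}F_i(b_i))\le\theta$ via the triangle inequality for $d_V$ to get $\bigvee_x F_j(x)\le\lim_{i,D}F_i(b_i)+\varepsilon$ on $B\cap B(\theta)$, finishing with Proposition~\ref{acotar D-l\'imites}(2). You avoid the $\theta$-splitting entirely by using Lemma~\ref{ser de Cauchy discreto} with the full $\varepsilon$ and then appealing to your second auxiliary fact (preservation of $D$-limits by the nonexpansive maps $x\mapsto x+\varepsilon$ and $x\mapsto x\dotdiv\varepsilon$). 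Your route packages the analysis into two reusable lemmas and needs no half-$\varepsilon$ bookkeeping; the paper's route stays closer to the raw definition and avoids proving that translations and truncations are nonexpansive. Both are valid, and your handling of the off-$B$/off-$C$ indices matches the paper's.
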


\begin{proof}
Let $r_i:=\bigwedge\{F_i(x) : x\in S\}$, $r:=lim_{i,D}r_i$ and $\epsilon \in {\bf V}^+$. Define $A(\epsilon)=\{j\in I : d_V(r,r_j)\leq\epsilon\}$, so by Definition of $lim_{i,D}r_i$ we may say that $A(\epsilon)\in D$. Notice that if $j\in A(\epsilon)$ and by Remark~\ref{simetr\'ia en d_V} we have that $r\dotdiv r_j\leq d_V(r,r_j)\leq\epsilon$. So, by Proposition~\ref{adjunci\'on} (1) it follows that $r\leq r_j+\epsilon$ and then $r\dotdiv \epsilon\leq r_j$.

Let $x\in S$, then $r\dotdiv \epsilon\leq r_j:=\bigwedge\{F_j(y) : y\in S\}\leq F_j(x)$. Since $A(\epsilon)\in D$, by Proposition~\ref{acotar D-l\'imites} (1) we have that $r\dotdiv\epsilon\leq lim_{i,D}F_i(x)$, and since it holds for any $x\in S$ then $r\dotdiv\epsilon\leq\bigwedge\{lim_{i,D}F_i(x) : x\in S\}$. By Proposition~ \ref{adjunci\'on} (1) and by commutativity of $+$, it follows that $r\leq\bigwedge\{lim_{i,D}F_i(x) : x\in S\} + \epsilon $. Since $0\prec \epsilon$ was taken arbitrarily, by Fact~\ref{descender con la suma} we may say that $r\leq\bigwedge\{lim_{i,D}F_i(x)| x\in S\}$.
\\ \\
Let $s_i:=\bigvee\{F_i(x) : x\in S\}$, $s:=lim_{i,D}s_i$ and $\epsilon\in{\bf V}^+$. By taking $A^{\prime}(\epsilon)=\{j\in I : d_V(s,s_j)\leq\epsilon\}$, then $A^{\prime}(\epsilon)\in D$ (definition of $lim_{i,D}s_i$). Given $j\in A^{\prime}(\epsilon)$, by Remark~\ref{simetr\'ia en d_V} we have that $s_j\dotdiv s\leq d_V(s,s_j)\leq\epsilon$ and by Proposition~\ref{adjunci\'on} (1) we may say that $s_j\leq s +\epsilon$. Given $x\in S$, it follows that $F_j(x)\leq s_j\leq s+\epsilon$. Since $d_V$ is symetric, by Proposition~\ref{acotar D-l\'imites} (2) and since $A^{\prime}(\epsilon)\in D$, we have that $lim_{i,D}F_i(x)\leq r+ \epsilon$; since  $x\in S$ was taken arbitrarily, then $\bigvee\{lim_{i,D}F_i(x) | x\in S\}\leq r+ \epsilon$. Since $0\prec\epsilon$ is arbitrary, by Fact~\ref{descender con la suma} we have that $\bigvee\{lim_{i,D}F_i(x) | x\in S\}\leq r$.
\\ \\
Let us continue with the proof of the two last facts. Let $\epsilon\in{\bf V}^+$, so by Lemma~\ref{argumento epsilon medios} there exists some $\theta\in {\bf V}^+$ such that $0\prec\theta$ y $\theta + \theta \leq\epsilon$. By hypothesis, suppose there exists $B\in D$ such that $\bigwedge_{y\in S}(\bigvee_{x\in S}(F_i(x)\dotdiv F_i(y)))=0$ for all $i\in B$. Let $i\in B$, since $0\prec\theta$ and by Lemma~\ref{ser de Cauchy discreto} take $b_i\in S$ such that $\bigvee_{x\in S}F_i(x)\dotdiv \theta\leq F_i(b_i)$, and so by Proposition~\ref{adjunci\'on} we may say that $\bigvee_{x\in S}F_i(x)\dotdiv F_i(b_i)\leq \theta$. If $i\notin B$, choose $b_i$ as any element in $S$.  Let $B(\theta):=\{i\in I : d_V(lim_{i,D}F_i(b_i),F_i(b_i))\leq\theta\}$, so by definition of $lim_{i,D}F_i(b_i)$ we know that $B(\theta)\in D$. Therefore, if $j\in B\cap B(\theta)$ then $\bigvee_{x\in S}F_j(x)\leq lim_{i,D}F_i(b_i)+\epsilon$. In fact, if $j\in B \cap B(\theta)$ then
\begin{eqnarray*}
\bigvee_{x\in S}F_j(x)\dotdiv lim_{i\in I}F_i(b_i)&\leq& d_V(\bigvee_{x\in S}F_j(x),lim_{i\in I}F_i(b_i))\\
&& \text{(by Remark~\ref{simetr\'ia en d_V})}\\
&\leq& d_V(\bigvee_{x\in S}F_j(x), F_j(b_j)) + d_V(F_j(b_j),lim_{i\in I}F_i(b_i))\\
&=& \left[\left(\bigvee_{x\in S}F_j(x)\dotdiv F_j(b_j)\right) \vee \left(F_j(b_j)\dotdiv \bigvee_{x\in S}F_j(x)\right)\right] \\
&&+ d_V(F_j(b_j),lim_{i\in I}F_i(b_i))\\
&=& \bigvee_{x\in S}F_j(x)\dotdiv F_j(b_j) + d_V(F_j(b_j),lim_{i\in I}F_i(b_i))\\
&\leq& \theta + \theta\\
&\leq& \epsilon.
\end{eqnarray*}
 Therefore, $\bigvee_{x\in S}F_j(x)\dotdiv lim_{i\in I}F_i(b_i)\leq\epsilon$, and by Proposition~\ref{adjunci\'on} (1) we may say that $\bigvee_{x\in S}F_j(x)\leq lim_{i\in I}F_i(b_i) + \epsilon$. Hence, since $B\cap B(\theta)\in D$ by Proposition~\ref{acotar D-l\'imites} (2) we have that $lim_{i,D}(\bigvee\{F_i(x): x \in S\})\leq lim_{i, D}F_i(b_i) + \epsilon$.
 \\ \\
Let us construct the sequence $(c_i)_{i\in I}$ as follows: Let $\epsilon\in{\bf V}^+$. By hypothesis and Lemma~\ref{ser de Cauchy discreto}, there for all $j\in C$ there exists $c_j\in S$ such that $F_j(c_j)\dotdiv\theta\leq\bigwedge_{x\in S}F_j(x)$, where $\theta\in{\bf V}^+$ satisfies $\theta+\theta\leq\epsilon$. By Proposition~\ref{adjunci\'on} (1), it follows that $F_j(c_j)\dotdiv\bigwedge_{x\in S}F_j(x)\leq\theta$; since $\bigwedge_{x\in S}F_j(x)\leq\ F_j(c_j)$, it implies that $d_V(F_j(c_j),\bigwedge_{x\in S}F_j(x))\leq\theta$. If $j\notin C$, take $c_j$ as any element of $S$. Let $C(\theta):=\{i\in I : d_V(lim_{i\in I}F_i(c_i),F_i(c_i))\leq\theta\}$. Therefore, if $j\in C\cap C(\theta)$ we have that 
\begin{eqnarray*}
lim_{i,D}F_i(c_i)\dotdiv\bigwedge_{x\in S}F_j(x) &\leq& d_V(lim_{i,D}F_i(c_i),\bigwedge_{x\in S}F_j(x))\\
&\leq& d_V(lim_{i,D}F_i(c_i),F_j(c_j))+d_V(F_j(c_j),\bigwedge_{x\in S}F_j(x))\\
&\leq& \theta+\theta\\
&\leq&\epsilon
\end{eqnarray*}
By Proposition~\ref{adjunci\'on} (1),  we have that $lim_{i,D}F_i(c_i)\dotdiv\epsilon\leq\bigwedge_{x\in S}F_j(x)$. Since $C\cap C(\theta)\in D$ and by Proposition~\ref{acotar D-l\'imites} (1), $lim_{i, D}F_i(c_i) \dotdiv \epsilon \leq lim_{i,D}(\bigwedge\{F_i(x): x \in S\})$.
\end{proof}

\subsubsection{$D$-product and $D$-ultraproduct of spaces and mappings}
%

\begin{Prop} \label{paso al cociente} Let $d_V$ be the symmetric distance of $\mathbf{V}$. If $(M_i)_{i\in I}$ is a sequence of $\mathbf{V}$continuity spaces such that for all $i\in I$ all distances $d_{M_i}$ are symmetric, then in the cartesian product $\prod_{i\in I}M_i$, the relation $\sim$ defined by $(x_i)_{i\in I}\sim (y_i)_{i\in I}$, if and only if,  $lim_{i,D}d_{M_i}(x_i,y_i)=0$, is an equivalence relation.
\end{Prop}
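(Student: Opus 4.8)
The plan is to check the three defining properties of an equivalence relation directly from Definition~\ref{D-ultral\'imite}, with reflexivity and symmetry essentially free and transitivity being the one step with content. Throughout I will use that in this section $d_V$ denotes the symmetric distance of $\mathbf{V}$, so that $d_V(0,t)=t$ for all $t\in\mathbf{V}$ by Remark~\ref{la distancia al 0 en el caso sim\'etrico}.

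\textbf{Reflexivity and symmetry.} For reflexivity I would use that $d_{M_i}(x_i,x_i)=0$ for every $i$ (reflexivity of a $\mathbf{V}$-continuity space), so the sequence $(d_{M_i}(x_i,x_i))_{i\in I}$ is constantly $0$ and its $D$-ultralimit is $0$ (e.g.\ by Proposition~\ref{acotar D-l\'imites}(1) with $A=I$, since $0\leq 0$, using that $0$ is the minimum of $\mathbf{V}$, or directly from the definition of $D$-ultralimit). For symmetry I would use that each $d_{M_i}$ is symmetric by hypothesis, so the sequences $(d_{M_i}(x_i,y_i))_{i\in I}$ and $(d_{M_i}(y_i,x_i))_{i\in I}$ are literally equal and hence share the $D$-ultralimit $0$; thus $(x_i)_{i\in I}\sim(y_i)_{i\in I}$ implies $(y_i)_{i\in I}\sim(x_i)_{i\in I}$.

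\textbf{Transitivity.} Assume $lim_{i,D}d_{M_i}(x_i,y_i)=0$ and $lim_{i,D}d_{M_i}(y_i,z_i)=0$, and set $a_i:=d_{M_i}(x_i,z_i)$. Given an arbitrary $\epsilon\in\mathbf{V}^+$, I would first split it using Lemma~\ref{argumento epsilon medios}: pick $\theta\in\mathbf{V}^+$ with $\theta+\theta\prec\epsilon$, hence $\theta+\theta\le\epsilon$ by Lemma~\ref{propiedades b\'asicas de la relaci\'on prec}(1). Using $d_V(0,t)=t$, the two hypotheses give $B:=\{i\in I:d_{M_i}(x_i,y_i)\le\theta\}\in D$ and $C:=\{i\in I:d_{M_i}(y_i,z_i)\le\theta\}\in D$, so $B\cap C\in D$. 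On $B\cap C$ the transitivity axiom of each $(M_j,d_{M_j})$ together with monotonicity of $+$ (Proposition~\ref{monotonia de la suma}(2)) gives $a_j\le d_{M_j}(x_j,y_j)+d_{M_j}(y_j,z_j)\le\theta+\theta\le\epsilon$. Thus there is a set in $D$ on which $\epsilon\ge a_j$, and Proposition~\ref{acotar D-l\'imites}(2) (this is where the symmetry of $d_V$ is needed) yields $\epsilon\ge lim_{i,D}a_i$. Since $\epsilon\in\mathbf{V}^+$ was arbitrary and $\bigwedge\{\epsilon\in\mathbf{V}:0\prec\epsilon\}=0$ by Fact~\ref{descender con la suma} (with $p=0$), I conclude $lim_{i,D}a_i=0$, i.e.\ $(x_i)_{i\in I}\sim(z_i)_{i\in I}$.

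The routine parts are reflexivity and symmetry; the only place requiring care is transitivity, where the obstacle is that one cannot transport the pointwise triangle inequality directly through the $D$-ultralimit. The remedy is the standard $\epsilon$-halving trick combined with the monotonicity of $D$-ultralimits from Proposition~\ref{acotar D-l\'imites}(2), which is precisely the point at which the hypothesis that all the $d_{M_i}$ (and the distance on $\mathbf{V}$) are symmetric is used.
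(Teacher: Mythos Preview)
Your argument is correct and follows essentially the same route as the paper: reflexivity and symmetry are immediate, and for transitivity both you and the paper use the $\epsilon$-halving of Lemma~\ref{argumento epsilon medios}, the identity $d_V(0,t)=t$ from Remark~\ref{la distancia al 0 en el caso sim\'etrico} to rewrite the $D$-limit conditions as $\{i:d_{M_i}(x_i,y_i)\le\theta\}\in D$ and $\{i:d_{M_i}(y_i,z_i)\le\theta\}\in D$, the pointwise triangle inequality on the intersection, and then Proposition~\ref{acotar D-l\'imites}(2) to bound the limit. One tiny quibble: in your reflexivity aside, citing Proposition~\ref{acotar D-l\'imites}(1) only gives $0\le\lim$, which is automatic; it is part~(2) (or, as you also say, the definition directly) that gives $\lim\le 0$.
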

\begin{proof}
Reflexivity follows trivially and symmetry follows from symmetry of all $d_{M_i}$. Let us focus on the transitivity. Let $(x_i)_{i\in I}, (y_i)_{i\in I}, (z_i)_{i\in I}\in \prod_{i\in I}M_i$ be such that $lim_{i,D}d_{M_i}(x_i,y_i)=0$ and $lim_{i,D}d_{M_i}(y_i,z_i)=0$. Since $\mathbf{V}$ is a value co-quantale, it is enough to check that for any $\epsilon\in{\bf V}^+$ we have that  $lim_{i,D}d_{M_i}(x_i,z_i)\leq\epsilon$. Let $\epsilon\in{\bf V}^+$, so by Lemma~\ref{argumento epsilon medios} there exists $\theta\in{\bf V}^+$ such that $\theta+\theta\leq\epsilon$. Since $0\prec\theta$ and by definition of $lim_{i,D}d_{M_i}(x_i,y_i)$ and $lim_{i,D}d_{M_i}(y_i,z_i)$,  $A(\theta):=\{i\in I : d_V(lim_{i,D}d_{M_i}(x_i,y_i),d_{M_i}(x_i,y_i)\leq\theta\}$ and $B(\theta):=\{i\in I : d_V(lim_{i,D}d_{M_i}(y_i,z_i),d_{M_i}(y_i,z_i)\leq\theta\}$ belong to $D$. By hypothesis, $lim_{i,D}d_{M_i}(x_i,y_i)=0$ y $lim_{i,D}d_{M_i}(y_i,z_i)=0$. Notice that by Proposition~\ref{adjunci\'on} (4) and since $0=\min \textsc{V}$, it follows that
\begin{eqnarray*}
d_V(lim_{i,D}d_{M_i}(x_i,y_i),d_{M_i}(x_i,y_i))\\
&=&\vee\{lim_{i,D}d_{M_i}(x_i,y_i)\dotdiv d_{M_i}(x_i,y_i),\\
&&d_{M_i}(x_i,y_i)\dotdiv lim_{i,D}d_{M_i}(x_i,y_i)\}\\
&& \text{($d_{M_i}$ is assumed to be symmetric)}\\
&=&\vee\{0\dotdiv d_{M_i}(x_i,y_i), d_{M_i}(x_i,y_i)\dotdiv 0\}\\
&=&\vee\{0,d_{M_i}(x_i,y_i)\}\\
&=& d_{M_i}(x_i,y_i)
\end{eqnarray*}
 Therefore, $A(\theta)=\{i\in I : d_{M_i}(x_i,y_i)\leq\theta \}$ and $B(\theta)=\{i\in I : d_{M_i}(y_i,z_i)\leq\theta \}$. So, if $i\in A(\theta)\cap B(\theta)$ then $d_{M_i}(x_i,z_i)\leq d_{M_i}(x_i,y_i)+d_{M_i}(y_i,z_i)\leq\theta+\theta\leq\epsilon$. Since $A(\theta)\cap B(\theta)\in D$, by Lemma~\ref{acotar D-l\'imites} (2) we may say that  $lim_{i,D}d_{M_i}(x_i,z_i)\leq\epsilon$.
\end{proof}

\begin{remark}
 In general, we do not require that all continuity spaces in the sequence  $(M_i)_{i\in I}$ are symmetric, where in that case $\sim$ might not be an equivalence relation. We just need this requirement if $(M_i,d_{M_i}):=(V,d_M)$ for all $i\in I$. As we will see in the following proposition, we can provided a continuity space structure to the cartesian product $\prod_{i\in I}M_i$, without assuming the symmetry on $d_{M_i}$.
\end{remark}

\begin{Prop}\label{la distancia en un D-producto}
Suppose that $d_V$ is the symmetric distance. If $(M_i)_{i\in I}$ is a sequence of $\mathbf{V}-continuity spaces$, $(\prod_{i\in I}M_i,d_D)$ is a $\mathbf{V}$-continuity space, where $d_D:\prod_{i\in I}M_i\times \prod_{i\in I}M_i\rightarrow{\bf V}$ is defined by $((x_i)_{i\in I},(y_i)_{i\in I})\mapsto lim_{i,D}d_{M_i}(x_i,y_i)$.
\end{Prop}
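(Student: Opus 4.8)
The plan is to verify directly the two axioms of a $\mathbf{V}$-continuity space from Definition~\ref{espacios de continuidad} for the map $d_D$, after first noting that $d_D$ is well defined. For well-definedness: given $(x_i)_{i\in I},(y_i)_{i\in I}\in\prod_{i\in I}M_i$, the family $\big(d_{M_i}(x_i,y_i)\big)_{i\in I}$ is a sequence in $\mathbf{V}$, and since $\mathbf{V}$ is a $\mathbf{V}$-domain, $(\mathbf{V},\tau^s)$ is Hausdorff and compact (Remark~\ref{Domain-Hausdorff-Compact}); hence by Lemma~\ref{lema de convergencia de ultrafiltros} together with Fact~\ref{SisFundeVecindades} the $D$-ultralimit $\lim_{i,D}d_{M_i}(x_i,y_i)$ exists and is unique (Definition~\ref{D-ultral\'imite}), so $d_D$ genuinely takes values in $\mathbf{V}$.

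Reflexivity should be immediate: each $d_{M_i}(x_i,x_i)=0$ by reflexivity of $d_{M_i}$, so $d_D\big((x_i)_{i\in I},(x_i)_{i\in I}\big)$ is the $D$-ultralimit of the constant sequence $0$, which is $0$ because $\{i\in I:d_V(0,0)\leq\epsilon\}=I\in D$ for every $\epsilon\in\mathbf{V}^+$.

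For transitivity, given $(x_i)_{i\in I},(y_i)_{i\in I},(z_i)_{i\in I}$ I would abbreviate $a_i:=d_{M_i}(x_i,y_i)$, $c_i:=d_{M_i}(x_i,z_i)$, $e_i:=d_{M_i}(z_i,y_i)$, $c:=\lim_{i,D}c_i$, $e:=\lim_{i,D}e_i$, and use transitivity of each $d_{M_i}$ to get $a_i\leq c_i+e_i$ for all $i$. The idea is to push this inequality down to the index level and then invoke Proposition~\ref{acotar D-l\'imites}. Fix $\epsilon\in\mathbf{V}^+$ and choose $\theta\in\mathbf{V}^+$ with $\theta+\theta\leq\epsilon$ (Lemma~\ref{argumento epsilon medios}). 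The sets $A:=\{i\in I:d_V(c,c_i)\leq\theta\}$ and $B:=\{i\in I:d_V(e,e_i)\leq\theta\}$ lie in $D$; since $d_V$ is the symmetric distance we have $c_i\dotdiv c\leq d_V(c,c_i)\leq\theta$ on $A$ (Remark~\ref{simetr\'ia en d_V}), so $c_i\leq c+\theta$ by Proposition~\ref{adjunci\'on}~(1), and likewise $e_i\leq e+\theta$ on $B$. Then for $i\in A\cap B\in D$, Proposition~\ref{monotonia de la suma}~(2) and commutativity of $+$ give $a_i\leq c_i+e_i\leq(c+e)+(\theta+\theta)\leq(c+e)+\epsilon$. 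Now Proposition~\ref{acotar D-l\'imites}~(2) (applicable since $d_V$ is symmetric) yields $\lim_{i,D}a_i\leq(c+e)+\epsilon$, and as $\epsilon\in\mathbf{V}^+$ was arbitrary, Fact~\ref{descender con la suma} gives $\lim_{i,D}a_i\leq\bigwedge\{(c+e)+\epsilon:0\prec\epsilon\}=c+e$, i.e.\ $d_D\big((x_i)_{i\in I},(y_i)_{i\in I}\big)\leq d_D\big((x_i)_{i\in I},(z_i)_{i\in I}\big)+d_D\big((z_i)_{i\in I},(y_i)_{i\in I}\big)$, as required.

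The proof has no serious obstacle; the one point needing care — which I would flag as the main subtlety — is to resist computing $\lim_{i,D}(c_i+e_i)$ directly and instead route everything through a $D$-large index set on which $a_i\leq(c+e)+\epsilon$, closing with Proposition~\ref{acotar D-l\'imites}~(2) and the $\epsilon$-approximation of $0$ from Fact~\ref{descender con la suma}. This is also exactly where the standing hypothesis that $d_V$ is the symmetric distance of $\mathbf{V}$ is used — both for the estimate $c_i\dotdiv c\leq d_V(c,c_i)$ and for the applicability of Proposition~\ref{acotar D-l\'imites}~(2) — in agreement with the warning in Remark~\ref{simetr\'ia en d_V}; note that no symmetry of the individual $d_{M_i}$ is needed.
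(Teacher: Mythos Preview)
Your proof is correct. Reflexivity is handled identically to the paper, and your well-definedness remark (existence and uniqueness of the $D$-ultralimit via Remark~\ref{Domain-Hausdorff-Compact}, Lemma~\ref{lema de convergencia de ultrafiltros}, Fact~\ref{SisFundeVecindades}) is a welcome addition the paper leaves implicit.

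For transitivity you take a genuinely different, slightly more economical route. The paper argues via the residuation: writing $a:=\lim_{i,D}a_i$, $b:=\lim_{i,D}b_i$, $c:=\lim_{i,D}c_i$ (their $b_i,c_i$ are your $c_i,e_i$), it picks $\theta$ with $3\theta\leq\epsilon$, intersects \emph{three} $D$-large sets (one for each of $a_i,b_i,c_i$), and estimates
\[
a\dotdiv b\;\leq\;(a\dotdiv a_i)+(a_i\dotdiv b_i)+(b_i\dotdiv b)\;\leq\;\theta+(c+\theta)+\theta\;\leq\;c+\epsilon,
\]
concluding $a\dotdiv b\leq c$ and hence $a\leq b+c$. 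You instead work at the level of the coordinates: with $\theta+\theta\leq\epsilon$ and only \emph{two} $D$-large sets you obtain $a_i\leq(c+e)+\epsilon$ on their intersection, then invoke Proposition~\ref{acotar D-l\'imites}~(2) as a black box to pass to $\lim_{i,D}a_i\leq(c+e)+\epsilon$, and finish with Fact~\ref{descender con la suma}. Your approach isolates the one nontrivial step (passing an eventual inequality through the $D$-limit) into a single citation and avoids the $\dotdiv$-chain; the paper's approach is more self-contained in that it does not appeal to Proposition~\ref{acotar D-l\'imites}. Both use the symmetry of $d_V$ at exactly the same junctures you flag, and neither needs symmetry of the $d_{M_i}$.
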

\begin{proof}
Given $(x_i)_{i\in I}\in\prod_{i\in I}M_i$, then 
\begin{eqnarray*}
d_{\prod_{i\in }}((x_i)_{i\in I},(x_i)_{i\in I}) &=& lim_{i,D}d_{M_i}(x_i,x_i)\\
& =&lim_{i,D}0\\
&=&0
\end{eqnarray*}

Let $(x_i)_{i\in I}, (y_i)_{i\in I}, (z_i)_{i\in I} \in \prod_{i\in I}M_i$, $a_i:=d_{M_i}(x_i,y_i)$, $b_i:=d_{M_i}(x_i,z_i)$, $c_i:=d_{M_i}(y_i,z_i)$,  $a:=lim_{i,D}d_{M_i}(x_i,y_i)$, $b:=lim_{i,D}d_{M_i}(x_i,z_i)$ and  $c:=lim_{i,D}d_{M_i}(y_i,z_i)$. We want to see that $a\leq b + c$, which by  Proposition~\ref{adjunci\'on} (1) it is enough to prove that $a\dotdiv b\leq c$. 
Let $\epsilon\in{\bf V}^+$, so by Corollary~\ref{argumentos epsilon sobre n} there exists some $\theta\in{\bf V}^+$ such that $\theta+\theta+\theta\prec\epsilon$. By Lemma~\ref{propiedades b\'asicas de la relaci\'on prec} (1),  $\theta+\theta+\theta\leq\epsilon$. By definition of $lim_{i,D}d_{M_i}(x_i,y_i)=:a$, $lim_{i,D}d_{M_i}(x_i,z_i)=:b$ and  $lim_{i,D}d_{M_i}(y_i,z_i)=:c$, $A:=\{i\in I : d_V(a,a_i)\leq\theta\}$, $B:=\{i\in I : d_V(b,b_i)\leq\theta\}$ and $C:=\{i\in I : d_V(c,c_i)\leq\theta\}$ belong to $D$. So, $A\cap B \cap C\in D$. Let $i\in A\cap B\cap C$, so by Proposition~\ref{adjunci\'on} (1) and (2) we may say that $(a\dotdiv b)\leq(a\dotdiv a_i)+(a_i\dotdiv b_i)+(b_i\dotdiv b)$. Since $d_{M_i}$ satisfies transitivity, by Proposition~~\ref{adjunci\'on} (1) we have that $a_i\dotdiv b_i\leq c_i$. Since $i\in C$, we may say that $c_i\dotdiv c\leq \vee\{c_i\dotdiv c, c\dotdiv c_i\}=d_V(c,c_i)\leq\theta$. By Proposition~\ref{adjunci\'on} (1), it follows that $c_i\leq c+\theta$. By monotonicity, $a\dotdiv b\leq (a\dotdiv a_i)+(b_i\dotdiv b)+(c+\theta)$. Since $i\in A(\theta)\cap B(\theta)$, we also have that $a\dotdiv a_i\leq \vee\{a\dotdiv a_i,a_i\dotdiv a\}=d_V(a,a_i)\leq\theta$ and $b_i\dotdiv b\leq\vee\{b_i\dotdiv b, b\dotdiv b_i\}=d_V(b,b_i)\leq\theta$. Hence, $a\dotdiv b\leq (a\dotdiv a_i)+(b_i\dotdiv b)+(c+\theta)\leq \theta + \theta + (c+\theta)=\theta+\theta+\theta+c\leq \epsilon+c$. Since $\epsilon\in{\bf V}^+$ was chosen arbitrarily, it follows that $a\dotdiv b\leq c$.
\end{proof}

\begin{defi}\label{D-producto de espacios}
We call {\bf D-product} of the sequence of $\mathbf{V}$-continuity spaces $(M_i,d_{M_i})_{i\in I}$ to the $\mathbf{V}$-continuity space $(M_D,d_{M_D}):=(\prod_{i\in I}M_i,d_D)$ defined in Proposition~\ref{la distancia en un D-producto}.
\end{defi}

\begin{lem}\label{el cociente es un espacio}
Let $d_V$ be the symmetric distance of $\mathbf{V}$. If $(M_i,d_{M_i})$ is a sequence of $\mathbf{V}$-continuity spaces such that $d_{M_i}$ is symmetric for all $i\in I$, then $\left( \prod_{i\in I}M_i/\sim, \overline{d}_D\right)$ is a $\mathbf{V}$-continuity space, provided that $\overline{d}_D$ is defined by $([(x_i)_{i\in I}],[(y_i)_{i\in I}])\mapsto lim_{i,D}d_{M_i}(x_i,y_i)$, where $\sim$ is defined as in Proposition~\ref{paso al cociente}.
\end{lem}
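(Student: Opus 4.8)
The plan is to verify that $\overline{d}_D$ is well-defined on equivalence classes, and then to observe that reflexivity and transitivity descend immediately from the $\mathbf{V}$-continuity space structure of the $D$-product $(\prod_{i\in I}M_i, d_D)$ given in Proposition~\ref{la distancia en un D-producto}. Note first that $\sim$ is genuinely an equivalence relation by Proposition~\ref{paso al cociente}, which is where the hypothesis that each $d_{M_i}$ is symmetric enters; so the only substantive point is representative-independence of the value $lim_{i,D}d_{M_i}(x_i,y_i)$.

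To that end, fix $(x_i)_{i\in I}\sim(x'_i)_{i\in I}$ and $(y_i)_{i\in I}\sim(y'_i)_{i\in I}$ and set $a:=lim_{i,D}d_{M_i}(x_i,y_i)$ and $a':=lim_{i,D}d_{M_i}(x'_i,y'_i)$. By the symmetry of the roles of the primed and unprimed sequences it is enough to prove $a\le a'$; the reverse inequality then follows by the same argument, and antisymmetry of $\le$ gives $a=a'$. To prove $a\le a'$ I would use Fact~\ref{descender con la suma}, so that it suffices to show $a\le a'+\epsilon$ for every $\epsilon\in{\bf V}^+$. Given such $\epsilon$, choose $\theta\in{\bf V}^+$ with $\theta+\theta+\theta\prec\epsilon$ by Corollary~\ref{argumentos epsilon sobre n}. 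Because the symmetric distance satisfies $d_V(0,t)=t$ (Remark~\ref{la distancia al 0 en el caso sim\'etrico}), the relations $(x_i)\sim(x'_i)$ and $(y_i)\sim(y'_i)$ yield $A:=\{i\in I: d_{M_i}(x_i,x'_i)\le\theta\}\in D$ and $B:=\{i\in I: d_{M_i}(y_i,y'_i)\le\theta\}\in D$; and from the definition of $lim_{i,D}$ together with Remark~\ref{simetr\'ia en d_V} and Proposition~\ref{adjunci\'on}~(1) one gets $C:=\{i\in I: d_{M_i}(x'_i,y'_i)\le a'+\theta\}\in D$. For $i\in A\cap B\cap C$ (which lies in $D$), transitivity and symmetry of $d_{M_i}$ give
\[
d_{M_i}(x_i,y_i)\le d_{M_i}(x_i,x'_i)+d_{M_i}(x'_i,y'_i)+d_{M_i}(y'_i,y_i)\le \theta+(a'+\theta)+\theta\le a'+\epsilon,
\]
and then Proposition~\ref{acotar D-l\'imites}~(2) (applicable since $d_V$ is the symmetric distance) yields $a=lim_{i,D}d_{M_i}(x_i,y_i)\le a'+\epsilon$, which is what was needed.

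Once $\overline{d}_D$ is known to be well-defined, the continuity-space axioms are routine: reflexivity follows from $\overline{d}_D([(x_i)],[(x_i)])=lim_{i,D}d_{M_i}(x_i,x_i)=lim_{i,D}0=0$, and transitivity from $\overline{d}_D([(x_i)],[(z_i)])=d_D((x_i),(z_i))\le d_D((x_i),(y_i))+d_D((y_i),(z_i))=\overline{d}_D([(x_i)],[(y_i)])+\overline{d}_D([(y_i)],[(z_i)])$, using Proposition~\ref{la distancia en un D-producto}. The main obstacle is the representative-independence argument; it works precisely because we can shrink $\epsilon$ to a $\theta$ with three copies still below it (Corollary~\ref{argumentos epsilon sobre n}) and because the inequality $p\dotdiv q\le d_V(p,q)$ is available, which is exactly why the symmetric distance of $\mathbf{V}$ is assumed throughout this subsection.
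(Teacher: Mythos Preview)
Your argument is correct and follows essentially the same route as the paper: both proofs reduce well-definedness to an $\epsilon$-argument that combines the transitivity of each $d_{M_i}$ with the fact that $(x_i)\sim(x'_i)$ forces $d_{M_i}(x_i,x'_i)\le\theta$ on a $D$-large set. The only cosmetic differences are that the paper concludes by showing $a$ satisfies the defining neighborhood condition of the $D$-limit of $(d_{M_i}(x'_i,y'_i))_i$ and then invokes uniqueness of $D$-limits, whereas you prove $a\le a'+\epsilon$ via Proposition~\ref{acotar D-l\'imites}~(2) and finish by antisymmetry; and the paper splits $\epsilon$ twice ($\theta+\theta\le\epsilon$, $\delta+\delta\le\theta$) while you third it once. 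You also make explicit that reflexivity and transitivity descend from Proposition~\ref{la distancia en un D-producto}, a point the paper's proof leaves implicit.
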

\begin{proof}
First, let us check that $((x_i)_{i\in I},(y_i)_{i\in I})\mapsto lim_{i,D}d_{M_i}(x_i,y_i)$ is well-defined. Let  $(a_i)_{i\in I}, (b_i)_{i\in I},(c_i)_{i\in I}, (d_i)_{i\in I}\in \prod_{i\in I}M_i$ be such that $(a_i)_{i\in I}\sim (b_i)_{i\in I}$ and $(c_i)_{i\in I}\sim (d_i)_{i\in I}$. By Fact~\ref{SisFundeVecindades} and the definition of {D-ultralimits}, in order to prove that $lim_{i,D}d_{M_i}(a_i,c_i)=lim_{i,D}d_{M_i}(b_i,d_i)$ it is enough to prove that for every $\epsilon\in{\bf V}^+$ we have that
\begin{center}
$\{ i\in I : d_V(lim_{i,D}d_{M_i}(a_i,c_i),d_{M_i}(b_i,d_i))\leq\epsilon\}\in D$.
\end{center}
Let $\epsilon\in{\bf V}^+$, so by Proposition~\ref{argumento epsilon medios} there exists $\theta\in{\bf V}^+$ such that $\theta+\theta \prec \epsilon$. By Lemma~\ref{propiedades b\'asicas de la relaci\'on prec} (1), we may say that $\theta+\theta \leq\epsilon$. In similar way we can prove that there exists some $\delta\in{\bf V}^+$ such that $\delta+\delta\leq\theta$. Since $(a_i)_{i\in I}\sim (b_i)_{i\in I}$, by definition of $\sim$  we have that $lim_{i,D}d_{M_i}(a_i,b_i)=0$, hence  for all $\gamma\in{\bf V}^+$ we have that $\{i\in I: d_V(lim_{i,D}d_{M_i}(a_i,b_i),d_{M_i}(a_i,b_i))\leq\gamma\}\in D$. By Remark~\ref{la distancia al 0 en el caso sim\'etrico}, we may say that $\{i\in I: d_{M_i}(a_i,b_i)\leq\gamma\}\in D$. Since $\delta \in {\bf V}^+$, in particular $A:=\{i\in I: d_{M_i}(a_i,b_i)\leq\delta\}\in D$. Analogously, since $(c_i)_{i\in I}~(d_i)_{i\in I}$ we may say that $B:=\{i\in I: d_{M_i}(c_i,d_i)\leq\delta\}\in D$. So, if $i\in A\cap B$ then 
\begin{eqnarray*}
d_{M_i}(a_i,c_i)&\leq& d_{M_i}(a_i,b_i)+d_{M_i}(b_i,d_i)+d_{M_i}(d_i,c_i)\\
&& (\text{by transitivity of } \ d_{M_i})\\
&\leq& \delta + d_{M_i}(b_i,d_i) + \delta \\
&& (\text{since} \ i\in A\cap B)\\
&\leq& d_{M_i}(b_i,d_i) + \theta \\
&&(\text{by Proposition~\ref{monotonia de la suma} and since} \ \delta+\delta \leq\theta )
\end{eqnarray*}
In a similar way we may say that $d_{M_i}(b_i,d_i)\leq d_{M_i}(a_i,c_i) + \theta$, whenever  $i\in A\cap B$. By Proposition~\ref{adjunci\'on} (1),  $d_{M_i}(b_i,d_i)\dotdiv d_{M_i}(a_i,c_i)\leq \theta$ and $d_{M_i}(a_i,c_i)\dotdiv d_{M_i}(b_i,d_i)\leq \theta$, therefore $d_V(d_{M_i}(a_i,c_i),d_{M_i}(b_i,d_i))\leq\theta$. Notice that by definition of $lim_{i,D}d_{M_i}(a_i,c_i)$, $A(\theta):=\{i\in I : d_V(lim_{i,D}d_{M_i}(a_i,c_i),d_{M_i}(a_i,c_i))\leq\theta\}\in D$. So, if $i\in A\cap B\cap A(\theta)$ we have that
\begin{eqnarray*}
d_V(lim_{i,D}d_{M_i}(a_i,c_i),d_{M_i}(b_i,d_i)) &\leq&  d_V(lim_{i,D}d_{M_i}(a_i,c_i),d_{M_i}(a_i,c_i))\\
&&+d_V(d_{M_i}(a_i,c_i),d_{M_i}(b_i,d_i))\\
&&(\text{by transitivity of } d_V)\\
&\leq& \theta + \theta \\ && (\text{since } \ i\in A\cap B\cap A(\theta) )\\
&\leq& \epsilon \\
&& (\text{by Proposition~\ref{monotonia de la suma} and since} \\
&&\theta+\theta \leq\epsilon )
\end{eqnarray*}
\end{proof}

\begin{defi}\label{UltradeEspacios}\index{Ultraproducto!de espacios}
Let $(M_i,d_{M_i})_{i\in I}$ be a sequence of $\mathbf{V}$-continuity spaces.
The $\mathbf{V}$-continuity space $\prod_{i\in I}M_i/\sim$ provided with the symmetric distance $d_{M_D}([(x_i)_{i\in I}],[(y_i)_{i\in I}]):=lim_{i,D}d_{M_i}(x_i,y_i)$ is called the $D$-{\bf ultraproduct} of the sequence $(M_i,d_{M_i})_{i\in I}$.
\end{defi}

\begin{defi}(c.f. \cite{BeBeHeUs}; pag 25)\label{UltradeFunciones}
Let $(M_i)_{i \in I}$, $(N_i)_{i \in I}$ be sequences of $\mathbf{V}$-continuity spaces and $K\in{\bf V}$ be such that $K\in \mathbf{V}$ greater than the diameter of all the considered spaces. Given a fixed $n\in\mathbb{N}\setminus\{0\}$ and $(f_i:M^n_i \rightarrow N_i)_{i\in I}$ a sequence of uniformly continuous mappings provided with the same modulus of uniform continuity. The mapping $f_D:(M_D,d_D^M)^n\rightarrow(N_D,d_D^N)$ defined by 
\begin{center}
    $((x^1_i)_{i\in I},...,(x^n_i)_{i \in I})\mapsto(f_i(x^1_i,...,x^n_i))_{i \in I}$
\end{center}
is said to be the {\bf D-product} of the sequence  $(f_i:M^n_i \rightarrow N_i)_{i\in I}$
\end{defi}

\begin{Prop}\label{conservaci\'ondelosm\'odulosdeuniformidad}
Let ${\bf V}$ be a co-divisible value co-quantale such that $d_V$ is the symmetric distance of $\mathbf{V}$.
If $(f_i:M^n_i \rightarrow N_i)_{i\in I}$ is a sequence of uniformly continuous mappings with the same modulus of uniform continuity $\Delta:{\bf V}^+\rightarrow{\bf V}^+$, then $\Delta$ is also a modulus of uniform continuity for the $D$-product  $f_D:(M_D,d^M_D)^n\rightarrow (N_D,d^N_D)$.
\end{Prop}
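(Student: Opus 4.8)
The plan is to strip away the two layers of product structure in the statement, reduce everything to a single $\mathbf{V}$-valued sequence indexed by $I$, and then invoke Proposition~\ref{cota fuerte} (this is the place where co-divisibility is used) together with Proposition~\ref{acotar D-l\'imites} (which uses that $d_V$ is symmetric). Write $\bar x=((x^1_i)_{i\in I},\dots,(x^n_i)_{i\in I})$ and $\bar y=((y^1_i)_{i\in I},\dots,(y^n_i)_{i\in I})$ for elements of $(M_D)^n$, fix $\epsilon\in{\bf V}^+$, and assume $d_{(M_D)^n}(\bar x,\bar y)\le\Delta(\epsilon)$. Iterating Proposition~\ref{estructura en el producto} and using Definition~\ref{D-producto de espacios}, this hypothesis says $\bigvee_{k=1}^{n}lim_{i,D}d_{M_i}(x^k_i,y^k_i)\le\Delta(\epsilon)$, hence $lim_{i,D}d_{M_i}(x^k_i,y^k_i)\le\Delta(\epsilon)$ for each $k\le n$. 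What must be proved is that $d^N_D(f_D(\bar x),f_D(\bar y))=lim_{i,D}d_{N_i}\big(f_i(x^1_i,\dots,x^n_i),f_i(y^1_i,\dots,y^n_i)\big)\le\epsilon$.

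The main step is to produce a set $A\in D$ on which the common modulus of uniform continuity $\Delta$ (Definition~\ref{modunif}) can be applied index by index; write $\bar x_i:=(x^1_i,\dots,x^n_i)$ and $\bar y_i:=(y^1_i,\dots,y^n_i)$. For each $k\le n$, from $lim_{i,D}d_{M_i}(x^k_i,y^k_i)\le\Delta(\epsilon)$ I would apply Proposition~\ref{cota fuerte} with $b:=\Delta(\epsilon)$ to get $A_k\in D$ with $d_{M_i}(x^k_i,y^k_i)\le\Delta(\epsilon)$ for all $i\in A_k$. Then $A:=\bigcap_{k=1}^{n}A_k\in D$, and for every $i\in A$ we have $d_{(M_i)^n}(\bar x_i,\bar y_i)=\bigvee_{k=1}^{n}d_{M_i}(x^k_i,y^k_i)\le\Delta(\epsilon)$, so the common modulus yields $d_{N_i}\big(f_i(\bar x_i),f_i(\bar y_i)\big)\le\epsilon$. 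Since $A\in D$ and $d_V$ is the symmetric distance of $\mathbf{V}$, Proposition~\ref{acotar D-l\'imites}~(2) applied to the sequence $\big(d_{N_i}(f_i(\bar x_i),f_i(\bar y_i))\big)_{i\in I}$ with $b=\epsilon$ gives $lim_{i,D}d_{N_i}\big(f_i(\bar x_i),f_i(\bar y_i)\big)\le\epsilon$, which is what we want.

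The hard part is the passage ``$lim_{i,D}a_i\le b$ implies $a_i\le b$ for all $i$ in some member of $D$'': this can fail in an arbitrary value co-quantale, and it is precisely what Proposition~\ref{cota fuerte} supplies once co-divisibility is assumed --- but only under the side condition $0\prec b\dotdiv lim_{i,D}a_i$. Thus the one genuinely delicate point is the boundary case in which $\Delta(\epsilon)$ equals $lim_{i,D}d_{M_i}(x^k_i,y^k_i)$ for some $k$, where Proposition~\ref{cota fuerte} does not apply on the nose; this case has to be treated by a separate approximation argument (introducing an auxiliary $\eta\in{\bf V}^+$, running the argument with this slack, and then letting $\eta$ shrink, using Fact~\ref{descender con la suma} in the form $\bigwedge\{\epsilon+\eta:0\prec\eta\}=\epsilon$). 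Everything else --- unpacking the iterated product distances, the finite intersection of members of $D$ remaining in $D$, and pushing a uniform bound through a $D$-ultralimit --- is routine given the results already established above.
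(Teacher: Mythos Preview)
Your approach is essentially the paper's own proof: apply Proposition~\ref{cota fuerte} to pass from $lim_{i,D}d_{M_i}(x^k_i,y^k_i)\le\Delta(\epsilon)$ to a set in $D$ on which the pointwise inequality holds, use the common modulus coordinatewise, and then push the resulting bound through the $D$-limit via Proposition~\ref{acotar D-l\'imites}~(2). The paper simplifies to $n=1$, while you carry the finite product along explicitly; this is cosmetic.

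Where you go beyond the paper is in flagging the side hypothesis $0\prec b\dotdiv lim_{i,D}a_i$ of Proposition~\ref{cota fuerte}: the paper's proof invokes that proposition with $b=\Delta(\epsilon)$ without verifying this condition, so you have in fact spotted a lacuna in the published argument. Your proposed repair, however, is not yet complete. Introducing $\eta\in{\bf V}^+$ and appealing to Fact~\ref{descender con la suma} would let you conclude once you know $lim_{i,D}d_{N_i}(f_i(\bar x_i),f_i(\bar y_i))\le\epsilon+\eta$ for every $\eta$; the difficulty is getting there. Working directly from the definition of the $D$-limit (rather than through Proposition~\ref{cota fuerte}) only yields $d_{M_i}(x^k_i,y^k_i)\le\Delta(\epsilon)+\eta$ on a large set, and the modulus $\Delta$ gives no control from the hypothesis $d\le\Delta(\epsilon)+\eta$ --- it needs $d\le\Delta(\epsilon)$ on the nose. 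Likewise, applying Proposition~\ref{cota fuerte} with $b=\Delta(\epsilon)+\eta$ still requires $0\prec(\Delta(\epsilon)+\eta)\dotdiv L_k$, and in a general co-quantale one does not have $(\Delta(\epsilon)+\eta)\dotdiv\Delta(\epsilon)\succ 0$ (only the inequality $(\Delta(\epsilon)+\eta)\dotdiv\Delta(\epsilon)\le\eta$ is guaranteed). So the boundary case remains open in your sketch as well; closing it seems to require either an additional hypothesis on $\Delta$ or a sharper variant of Proposition~\ref{cota fuerte}.
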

\begin{proof}
For the sake of simplicity, let us take $n=1$. Let ${\bf x}=(x_i)_{i \in I}, {\bf y}=(y_i)_{i \in I}\in \prod_{i\in I}M_i$ such that $d^M_D({\bf x},{\bf y})\leq \Delta(\epsilon)$, where $\epsilon\in{\bf V}^+$ (i.e., $lim_{i,D}d_{M_i}(x_i,y_i)\leq \Delta(\epsilon)$). By Lemma~\ref{cota fuerte},  there exists $A \in D$ such that for all $i \in A$ we have that $d_{M_i}(x_i,y_i)\leq \Delta(\epsilon)$. Since $\Delta$ is a modulus of uniform continuity for $f_i$ for all $i\in I$, in particular we may say that $d_{\mathcal{N}_i}(f_i(x_i), f_i(y_i)\leq \epsilon$, whenever $i\in A$. Since $d_V$ is the symmetric distance of $\mathbf{V}$ and by Lemma~\ref{acotar D-l\'imites} (2),  $d^N_D((f_i)_{i\in I}({\bf x}),(f_i)_{i\in I}({\bf y}))\leq\epsilon$.
\end{proof}

\subsubsection{\L o\'s Theorem in value co-quantale logics.}
In order to prove a version of \L o\'s Theorem in this setting, we do not require that the involved distances of the metric structures are necessarily symmetric. Because of that, it is enough to consider the $D$-product of a sequence of $\mathbf{V}$-continuity spaces $(M_i,d_{M_i})_{i \in I}$ instead of its respective $D$-ultraproduct, like we need to do in Continuous Logic for assuring that the obtained distance is actually symmetric. However, we need to consider the respective quotient of $D$-powers of $({\bf V},d_V)$ as a $\mathbf{V}$-symmetric continuity space. From now, we assume that $({\bf V},d_V)$ is a compact, Hausdorff, co-divisible value co-quantale, where $d_V$ is the symmetric distance of $\mathbf{V}$.


\begin{defi}\label{Ultrapotencia}
Given $(M,d_M)$ a $\mathbf{V}$-continuity space where $d_M$ is symmetric, define the {\bf ultrapower} of $(M,d_M)$ as the $D$-ultraproduct\linebreak
$\left( \prod_{i\in I}M /\sim, \overline{d}_D\right)$ of the constant sequence $((M, {d}_M))_{i\in I}$
\end{defi}

\begin{defi}\label{funci\'on can\'onica al cociente}
Given a $D$-ultraproduct $M_D/\sim:=(\prod_{i\in I}M_i)/\sim$, the mapping $\theta:\prod_{i\in I}M_i\rightarrow M_D/\sim$ defined by $(x_i)_{i\in I}\mapsto [(x_i)_{i\in I}]$ is called the {\bf canonical mapping}.
\end{defi}


\begin{notation}
We denote by $({\bf V}_D/\sim, d_{V/\sim})$ the $D$-ultrapower of  the ${\bf V}$-continuity space $(V,d_V)$ (where $d_V$ is the symmetric distance).
\end{notation}

\begin{defi}\label{V-equivalencia}\index{Espacios equivalentes}
We say that two $\mathbf{V}$-continuity spaces $(X,d_X)$ and $(Y,d_Y)$ are ${\bf V}$-{\bf equivalent}, if and only if, there exists a bijection $f:(X,d_x)\rightarrow (Y,d_Y)$ such that $d_X(x,y)=d_Y(f(x),f(y))$ for any $x,y \in X$. In this case, we say that $f$ is an ${\bf V}$-equivalence.
\end{defi}

\begin{Prop}(c.f. \cite{BeBeHeUs}; pg 26)\label{ultraproductodeunespaciocompacto}
Let $(\mathbf{V},d_V)$ a value co-quantale  provided that $d_V$ is the symmetric distance, then  the $D$-ultrapower $(V_D/\sim,d_{V/\sim})$ is ${\bf V}-$equivalent to $(V,d_V)$.
\end{Prop}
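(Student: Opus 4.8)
The plan is to produce an explicit $\mathbf{V}$-equivalence (Definition~\ref{V-equivalencia}) $f\colon (V_D/\sim,d_{V/\sim})\to(V,d_V)$. Since $\mathbf{V}$ is a $\mathbf{V}$-domain, $(V,\tau^s)$ is compact and Hausdorff (Remark~\ref{Domain-Hausdorff-Compact}), so by Lemma~\ref{lema de convergencia de ultrafiltros} and Fact~\ref{SisFundeVecindades} every sequence $(x_i)_{i\in I}$ in $V$ has a unique $D$-ultralimit $lim_{i,D}x_i\in V$ in the sense of Definition~\ref{D-ultral\'imite}; also $(V_D/\sim,d_{V/\sim})$ is a genuine $\mathbf{V}$-continuity space by Lemma~\ref{el cociente es un espacio}, applied to the constant sequence $((V,d_V))_{i\in I}$ of symmetric spaces. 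I would set $\tilde f\colon\prod_{i\in I}V\to V$, $\tilde f((x_i)_{i\in I}):=lim_{i,D}x_i$, and argue that it descends to the required $f$ once the following single identity is established:
\[
lim_{i,D}d_V(x_i,y_i)=d_V\bigl(lim_{i,D}x_i,\,lim_{i,D}y_i\bigr)\qquad\text{for all }(x_i)_{i\in I},(y_i)_{i\in I}\in\prod_{i\in I}V .
\]

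To prove this identity, write $x:=lim_{i,D}x_i$, $y:=lim_{i,D}y_i$, $\ell:=lim_{i,D}d_V(x_i,y_i)$. For $\ell\le d_V(x,y)$: given $\epsilon\in\mathbf{V}^+$ pick $\theta\in\mathbf{V}^+$ with $\theta+\theta\le\epsilon$ (Lemma~\ref{argumento epsilon medios} and Lemma~\ref{propiedades b\'asicas de la relaci\'on prec}(1)); the set $A:=\{i:d_V(x,x_i)\le\theta\}\cap\{i:d_V(y,y_i)\le\theta\}$ lies in $D$ by Definition~\ref{D-ultral\'imite}, and for $i\in A$ transitivity and symmetry of $d_V$ give $d_V(x_i,y_i)\le d_V(x,y)+\theta+\theta$, so Proposition~\ref{acotar D-l\'imites}(2) yields $\ell\le d_V(x,y)+\epsilon$, whence $\ell\le\bigwedge\{d_V(x,y)+\epsilon:\epsilon\in\mathbf{V}^+\}=d_V(x,y)$ by Fact~\ref{descender con la suma}. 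For the reverse inequality: given $\epsilon\in\mathbf{V}^+$ pick $\theta\in\mathbf{V}^+$ with $\theta+\theta+\theta\le\epsilon$ (Corollary~\ref{argumentos epsilon sobre n}); the set $A':=\{i:d_V(x,x_i)\le\theta\}\cap\{i:d_V(y,y_i)\le\theta\}\cap\{i:d_V(\ell,d_V(x_i,y_i))\le\theta\}$ lies in $D$, and for any $i\in A'$ one gets $d_V(x_i,y_i)\le\ell+\theta$ from $d_V(\ell,d_V(x_i,y_i))\le\theta$ together with Remark~\ref{simetr\'ia en d_V} and Proposition~\ref{adjunci\'on}(1), hence $d_V(x,y)\le\theta+(\ell+\theta)+\theta\le\ell+\epsilon$, and Fact~\ref{descender con la suma} gives $d_V(x,y)\le\ell$. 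Finally, because $d_V$ is symmetric, $d_V(u,v)=0$ implies $u\le v$ and $v\le u$ (Proposition~\ref{adjunci\'on}(4)), so $u=v$.

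With the identity in hand everything follows easily. If $(x_i)_{i\in I}\sim(y_i)_{i\in I}$ then $lim_{i,D}d_V(x_i,y_i)=0$, so $d_V(\tilde f((x_i)),\tilde f((y_i)))=0$ and thus $\tilde f((x_i))=\tilde f((y_i))$; hence $\tilde f$ descends to a well-defined $f([(x_i)_{i\in I}]):=lim_{i,D}x_i$, and the identity is exactly $d_{V/\sim}([(x_i)],[(y_i)])=d_V(f([(x_i)]),f([(y_i)]))$, i.e.\ $f$ respects the distances. Surjectivity is immediate, since $lim_{i,D}a=a$ for the constant sequence, so $f([(a)_{i\in I}])=a$. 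Injectivity follows from distance preservation: if $f([(x_i)])=f([(y_i)])$ then $d_{V/\sim}([(x_i)],[(y_i)])=d_V(x,x)=0$, and since $\sim$ is precisely the relation ``$lim_{i,D}d_V(\cdot,\cdot)=0$'' this forces $[(x_i)]=[(y_i)]$. Thus $f$ is a $\mathbf{V}$-equivalence. The only real work is the displayed identity, which is the $D$-limit continuity of $d_V$: both inequalities need the appropriate $\epsilon/n$-splitting of the error term, handled through the monotonicity of $\dotdiv$ (Lemma~\ref{monoton\'ia-intercambio}, Proposition~\ref{adjunci\'on}) and the $D$-ultralimit bounding result (Proposition~\ref{acotar D-l\'imites}) — but no idea beyond those already used for Propositions~\ref{paso al cociente} and \ref{la distancia en un D-producto}.
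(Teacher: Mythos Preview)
Your proof is correct and uses the same circle of ideas as the paper, but it is organized around the inverse map. The paper defines the diagonal $T:V\to V_D/\sim$, $x\mapsto[(x)_{i\in I}]$, and checks injectivity, surjectivity (by showing $[(lim_{i,D}x_i)_{i\in I}]=[(x_i)_{i\in I}]$, i.e.\ $lim_{i,D}d_V(lim_{i,D}x_i,x_i)=0$), and distance preservation (trivial on constant sequences). You instead work directly with $T':[(x_i)]\mapsto lim_{i,D}x_i$ and prove the stronger identity $lim_{i,D}d_V(x_i,y_i)=d_V(lim_{i,D}x_i,lim_{i,D}y_i)$, from which everything drops out at once. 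Your route is a bit more labor up front (two $\epsilon/n$-arguments instead of one), but it yields a cleaner statement --- continuity of $d_V$ under $D$-limits --- that the paper only obtains implicitly; the paper's route is more economical because distance preservation for the diagonal is immediate and only the special case $y_i=lim_{i,D}x_i$ of your identity is needed for surjectivity.
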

\begin{proof}
Defining $T: {\bf V}\rightarrow {\bf V}/\sim$ by $x\mapsto [(x)_{i\in I}]$, we have that $T$ is a ${\bf V}$-equivalence. In fact,  $T$ is injective:  Let $x,y\in{\bf V}$ be such that $T(x)=T(y)$, so by definition of $\sim$ (Proposition~ \ref{paso al cociente}) we have that $d_{V/\sim}(T(x),T(y))=d_{V/\sim}([(x)_{i\in I}],[(y)_{i\in I}])=\lim_{i,D} d_V(x,y)=d_V(x,y)=(x\dotdiv y)\vee(y\dotdiv x)=0$, therefore $x\dotdiv y=0$ and $y\dotdiv x=0$. By Proposition~\ref{adjunci\'on} (4) we may say that $x\leq y$ and $y\leq x$, and so $x=y$.

In order to prove that $T$ is surjective, let $[(x_i)_{i\in I}]\in V_D$. Let us see that $T(lim_{i,D}x_i):=[(lim_{i,D}x_i)_{i\in I}]=[(x_i)_{i\in I}]$ (i.e., we will have that $lim_{i,D}d_V(lim_{i.D}x_i,x_i)=0$)): let $\epsilon\in{\bf V}^+$, 
therefore  $d_V(0,d_V(lim_{i.D}x_i,x_i))=\vee\{0\dotdiv\ d_V(lim_{i.D}x_i,x_i),d_V(lim_{i.D}x_i,x_i)\dotdiv 0\}=\vee\{0,d_V(lim_{i.D}x_i,x_i)\}=d_V(lim_{i.D}x_i,x_i)$, and by definition of $lim_{i.D}x_i$ we may say that $\{i\in I : d_V(lim_{i.D}x_i,x_i) \leq \epsilon \}\in D$. Hence, $\{i\in I : d_V(0,d_V(lim_{i.D}x_i,x_i)) \leq \epsilon \}\in D$ and then  $lim_{i,D}d_V(lim_{i.D}x_i,x_i)=0$.

$T$ preserves distances: In fact, $d_{V_D}(T(x),T(y))=d_{V_D}([(x)_{i\in I}],[(y)_{i\in I}]):=\lim_{i,D}(d_V(x,y))_{i\in I}=d_V(x,y)$.

Therefore, $T$ is an equivalence.
\end{proof}

\begin{remark}\label{Inversa de la diagonal}
Notice that the mapping $T':{\bf V}_D/\sim \ \to {\bf V}$ defined by $[(x_i)_{i\in I}]\mapsto lim_{i,D}x_i$ is the inverse of $T$.
\end{remark}

\begin{hec}\label{m\'odulos sobre el cocinte}
Given a sequence of $\mathbf{V}$-continuity spaces $((M_i,d_{M_i}))_{i\in I}$ provided that all distances $d_{M_i}$ are symmetric, the canonical mapping $\theta:\prod_{i\in I}M_i\rightarrow M_D/\sim$ is uniformly continuous with modulus of uniform continuity  $id_{{\bf V}^+}$.
\end{hec}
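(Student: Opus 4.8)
The plan is to notice that the canonical mapping $\theta$ is in fact distance-preserving, so that the claim becomes immediate with the identity as a modulus of uniform continuity.

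First I would unwind the relevant definitions. By Definition~\ref{D-producto de espacios} (and Proposition~\ref{la distancia en un D-producto}), the distance on the $D$-product $\prod_{i\in I}M_i$ is $d_D((x_i)_{i\in I},(y_i)_{i\in I})=lim_{i,D}d_{M_i}(x_i,y_i)$, and by Lemma~\ref{el cociente es un espacio} (whose proof uses precisely the hypothesis that each $d_{M_i}$ is symmetric) the assignment $([(x_i)_{i\in I}],[(y_i)_{i\in I}])\mapsto lim_{i,D}d_{M_i}(x_i,y_i)$ is well defined on $\prod_{i\in I}M_i/\sim$ and endows it with a ${\bf V}$-continuity space structure $\overline{d}_D$. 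The canonical mapping $\theta$ of Definition~\ref{funci\'on can\'onica al cociente} sends $(x_i)_{i\in I}$ to $[(x_i)_{i\in I}]$.

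Next, comparing the two formulas gives, for all ${\bf x}=(x_i)_{i\in I}$ and ${\bf y}=(y_i)_{i\in I}$ in $\prod_{i\in I}M_i$, the equality $\overline{d}_D(\theta({\bf x}),\theta({\bf y}))=lim_{i,D}d_{M_i}(x_i,y_i)=d_D({\bf x},{\bf y})$; that is, $\theta$ preserves distances exactly. Hence, given any $\epsilon\in{\bf V}^+$, if $d_D({\bf x},{\bf y})\leq id_{{\bf V}^+}(\epsilon)=\epsilon$ then $\overline{d}_D(\theta({\bf x}),\theta({\bf y}))=d_D({\bf x},{\bf y})\leq\epsilon$, so $id_{{\bf V}^+}$ is a modulus of uniform continuity for $\theta$ in the sense of Definition~\ref{modunif}.

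There is no real obstacle in this argument; the only point to keep an eye on is that the chain of equalities above only makes sense once we know $\overline{d}_D$ descends to the quotient, and this is exactly where the symmetry of the $d_{M_i}$ enters, via Lemma~\ref{el cociente es un espacio}. (If one prefers to avoid invoking that lemma, the inequality $\overline{d}_D(\theta({\bf x}),\theta({\bf y}))\leq d_D({\bf x},{\bf y})$ already suffices for uniform continuity and follows directly from evaluating $\overline{d}_D$ on the chosen representatives.)
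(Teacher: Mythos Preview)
Your argument is correct: by construction both $d_D$ on $\prod_{i\in I}M_i$ and $\overline{d}_D$ on the quotient are given by the same formula $lim_{i,D}d_{M_i}(x_i,y_i)$, so $\theta$ is distance-preserving and $id_{{\bf V}^+}$ is trivially a modulus of uniform continuity. The paper states this result as a Fact without proof, so there is no alternative argument to compare yours against; what you wrote is exactly the natural justification the authors are implicitly relying on.
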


\begin{defi}\label{interpretaci\'on en el D-producto}
Suppose that ${\bf V}$ is co-divisible and let ${\bf L}$ be a language based in a continuous structure. Given a sequence $(\mathcal{M}_i)_{i\in I}$ of ${\bf L}$-structures, define the {\bf D-product} of $(\mathcal{M}_i)_{i\in I}$ as the ${\bf L}$-structure $\mathcal{M}_D$ with underlying $\mathbf{V}$-continuity space $(M_D,d_D)$, defined as follows:
\begin{enumerate}
    \item For a predicate symbol $R\in{\bf L}$, define $R^{\prod_{i\in I}\mathcal{M}_i}:=T^\prime\circ\theta\circ R_D$, where $R_D$ is the $D$-product of the mappings $(R^{\mathcal{M}_i})_{i\in I}$, $\theta$ the canonical mapping given in Definition~\ref{funci\'on can\'onica al cociente} and $T^\prime$ the mapping defined in Remark~\ref{Inversa de la diagonal}. 
    \item For a function symbol $F\in {\bf L}$, defined $F^{\prod_{i\in I}\mathcal{M}_i}$ as the $D$-product of the mappings $(F^{\mathcal{M}_i})_{i\in I}$.
    \item For a constant symbol $c\in {\bf L}$, define $c^{\prod_{i\in I}\mathcal{M}_i}:=(c^{\mathcal{M}_i})_{i\in I}$.
\end{enumerate}
\end{defi}

\begin{remark}\label{nota importante}
Notice that Theorem~\ref{conservaci\'ondelosm\'odulosdeuniformidad} guarantees that the interpretations of the symbols of $\mathbf{L}$ given above have the same modulus of uniform continuity given by the language.
\end{remark}

\begin{theo}(\L o\'s Theorem; c.f. \cite{BeBeHeUs} Thrm 5.4)\label{Teorema de Los}. Let $({\bf V},d_V)$ be a co-divisible ${\bf V}$-domain. If $(\mathcal{M}_i)_{i\in I}$ is a sequence of ${\bf L}$-structures, then for any  ${\bf L}$-formula $\phi(x_1,...,x_n)$ (if $\phi$ has quantifiers, we require that its interpretations in any ${\bf L}$-structure $\mathcal{M}_i$ satisfy the hypothesis of Proposition~\ref{ser de Cauchy discreto}) and any tuple $((a^1_i)_{i \in I},...,(a^n_i)_{i \in I})\in(\prod_{i\in I}M_i)^n$, we have that
\begin{center}
    $\phi^{\mathcal{M_D}} ((a^1_i)_{i \in I},...,(a^n_i)_{i \in I})=lim_{i,D}\phi^{\mathcal{M}_i}(a^1_i,...,a^n_i)$
\end{center}
\end{theo}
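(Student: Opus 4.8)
The main obstacle will be the quantifier case; the remaining cases are routine unwinding of the definitions of the previous subsections. The plan is to induct on the structure of $\phi$, after first recording the analogous statement for $\mathbf{L}$-terms. Write $\overline{a}_{i}:=(a^{1}_{i},\dots,a^{n}_{i})\in M_{i}^{n}$ and $\overline{\mathbf{a}}:=((a^{1}_{i})_{i\in I},\dots,(a^{n}_{i})_{i\in I})$, and recall that co-divisibility and the $\mathbf{V}$-domain hypothesis are exactly what guarantee that $\mathcal{M}_{D}$ is a well-defined $\mathbf{L}$-structure (Proposition~\ref{conservaci\'ondelosm\'odulosdeuniformidad}) and that every $lim_{i,D}$ below exists and is unique (Lemma~\ref{lema de convergencia de ultrafiltros}). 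An easy induction on $t$ shows $t^{\mathcal{M}_{D}}(\overline{\mathbf{a}})=(t^{\mathcal{M}_{i}}(\overline{a}_{i}))_{i\in I}$: constants and variables are immediate from Definition~\ref{interpretaci\'on en el D-producto}(3) and the definition, and $t=ft_{1}\cdots t_{m}$ follows because $f^{\mathcal{M}_{D}}$ is evaluated coordinatewise (Definitions~\ref{interpretaci\'on en el D-producto}(2) and~\ref{UltradeFunciones}). Granting this, the atomic case $\phi=d(t_{1},t_{2})$ is handled by $(d(t_{1},t_{2}))^{\mathcal{M}_{D}}(\overline{\mathbf{a}})=d_{D}((t_{1}^{\mathcal{M}_{i}}(\overline{a}_{i}))_{i},(t_{2}^{\mathcal{M}_{i}}(\overline{a}_{i}))_{i})=lim_{i,D}d_{M_{i}}(t_{1}^{\mathcal{M}_{i}}(\overline{a}_{i}),t_{2}^{\mathcal{M}_{i}}(\overline{a}_{i}))$, using Proposition~\ref{la distancia en un D-producto}, while $\phi=P(t_{1},\dots,t_{n})$ is handled by the identity $P^{\mathcal{M}_{D}}=T'\circ\theta\circ P_{D}$ (Definition~\ref{interpretaci\'on en el D-producto}(1) and Remark~\ref{Inversa de la diagonal}), which sends $((u^{1}_{i})_{i},\dots,(u^{n}_{i})_{i})$ to $lim_{i,D}P^{\mathcal{M}_{i}}(u^{1}_{i},\dots,u^{n}_{i})$.

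For a connective $\phi=u(\phi_{1},\dots,\phi_{m})$ the induction hypothesis reduces the assertion to the fact that a uniformly continuous $u\colon\mathbf{V}^{m}\to\mathbf{V}$ commutes with $D$-limits. This holds because $(\mathbf{V},\tau^{s})$ is compact Hausdorff (Remark~\ref{Domain-Hausdorff-Compact}): writing $\ell_{k}:=lim_{i,D}x^{k}_{i}$, for a neighbourhood $W$ of $u(\ell_{1},\dots,\ell_{m})$ choose a basic product neighbourhood $U_{1}\times\cdots\times U_{m}\subseteq u^{-1}(W)$ of $(\ell_{1},\dots,\ell_{m})$; by Lemma~\ref{lema de convergencia de ultrafiltros} each $\{i:x^{k}_{i}\in U_{k}\}$ belongs to $D$, hence so does their finite intersection, which is contained in $\{i:u(x^{1}_{i},\dots,x^{m}_{i})\in W\}$, so uniqueness of the $D$-limit forces $lim_{i,D}u(x^{1}_{i},\dots,x^{m}_{i})=u(\ell_{1},\dots,\ell_{m})$.

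The quantifier case, which I expect to be the real obstacle, I would treat for $\bigwedge_{x}\psi$ and $\bigvee_{x}\psi$ simultaneously through Proposition~\ref{teorema de cuantificadores}. Put $S:=\prod_{j\in I}M_{j}=M_{D}$ and define $F_{i}\colon S\to\mathbf{V}$ by $F_{i}(\mathbf{c}):=\psi^{\mathcal{M}_{i}}(\mathbf{c}(i),\overline{a}_{i})$. Since every $M_{j}$ is nonempty, the coordinate $\mathbf{c}(i)$ runs over all of $M_{i}$ as $\mathbf{c}$ runs over $S$, so $\bigwedge\{F_{i}(\mathbf{c}):\mathbf{c}\in S\}=(\bigwedge_{x}\psi)^{\mathcal{M}_{i}}(\overline{a}_{i})$ and $\bigvee\{F_{i}(\mathbf{c}):\mathbf{c}\in S\}=(\bigvee_{x}\psi)^{\mathcal{M}_{i}}(\overline{a}_{i})$, while the induction hypothesis applied to $\psi$ gives $lim_{i,D}F_{i}(\mathbf{c})=\psi^{\mathcal{M}_{D}}(\mathbf{c},\overline{\mathbf{a}})$, whence $\bigwedge\{lim_{i,D}F_{i}(\mathbf{c}):\mathbf{c}\in S\}=(\bigwedge_{x}\psi)^{\mathcal{M}_{D}}(\overline{\mathbf{a}})$ and likewise for $\bigvee$. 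The two Cauchy conditions needed to invoke the second part of Proposition~\ref{teorema de cuantificadores} (with $B=C=I\in D$) are, after rewriting $\bigvee_{\mathbf{x}\in S}(F_{i}(\mathbf{x})\dotdiv F_{i}(\mathbf{y}))$ and $\bigvee_{\mathbf{x}\in S}(F_{i}(\mathbf{y})\dotdiv F_{i}(\mathbf{x}))$ by means of Proposition~\ref{residuar un \'infimo} and Fact~\ref{resta truncada es adjunto izquierdo}, precisely the two alternatives of Lemma~\ref{ser de Cauchy discreto} for the family $(\psi^{\mathcal{M}_{i}}(c,\overline{a}_{i}))_{c\in M_{i}}$ — which is what is being assumed of $\phi$.

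Proposition~\ref{teorema de cuantificadores} then yields one inequality of each pair directly: $(\bigwedge_{x}\psi)^{\mathcal{M}_{D}}(\overline{\mathbf{a}})\geq lim_{i,D}(\bigwedge_{x}\psi)^{\mathcal{M}_{i}}(\overline{a}_{i})$ and $(\bigvee_{x}\psi)^{\mathcal{M}_{D}}(\overline{\mathbf{a}})\leq lim_{i,D}(\bigvee_{x}\psi)^{\mathcal{M}_{i}}(\overline{a}_{i})$. For the converse inequalities, fix $\epsilon\in\mathbf{V}^{+}$ and take the sequences $(\mathbf{b}_{i})_{i\in I},(\mathbf{c}_{i})_{i\in I}$ in $S$ supplied by Proposition~\ref{teorema de cuantificadores}; the key step is a diagonalization, exploiting that $F_{i}(\mathbf{b}_{i})$ depends only on $\mathbf{b}_{i}(i)$: the point $\mathbf{e}\in S=M_{D}$ with $\mathbf{e}(i):=\mathbf{b}_{i}(i)$ satisfies $lim_{i,D}F_{i}(\mathbf{b}_{i})=lim_{i,D}F_{i}(\mathbf{e})=\psi^{\mathcal{M}_{D}}(\mathbf{e},\overline{\mathbf{a}})\leq(\bigvee_{x}\psi)^{\mathcal{M}_{D}}(\overline{\mathbf{a}})$ by the induction hypothesis, and likewise $\mathbf{d}(i):=\mathbf{c}_{i}(i)$ gives $lim_{i,D}F_{i}(\mathbf{c}_{i})=\psi^{\mathcal{M}_{D}}(\mathbf{d},\overline{\mathbf{a}})\geq(\bigwedge_{x}\psi)^{\mathcal{M}_{D}}(\overline{\mathbf{a}})$. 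Plugging these in and using monotonicity of $+$ and $\dotdiv$ (Proposition~\ref{monotonia de la suma}(2) and Lemma~\ref{monoton\'ia-intercambio}) gives, for every $\epsilon\in\mathbf{V}^{+}$, that $(\bigvee_{x}\psi)^{\mathcal{M}_{D}}(\overline{\mathbf{a}})+\epsilon\geq lim_{i,D}(\bigvee_{x}\psi)^{\mathcal{M}_{i}}(\overline{a}_{i})$ and $(\bigwedge_{x}\psi)^{\mathcal{M}_{D}}(\overline{\mathbf{a}})\dotdiv\epsilon\leq lim_{i,D}(\bigwedge_{x}\psi)^{\mathcal{M}_{i}}(\overline{a}_{i})$; taking $\bigwedge$ over $\epsilon\in\mathbf{V}^{+}$ in the first (Fact~\ref{descender con la suma}) and $\bigvee$ over $\epsilon\in\mathbf{V}^{+}$ in the second (Proposition~\ref{residuar un \'infimo}, together with $p\dotdiv 0=p$) collapses these to the two missing inequalities, which finishes the induction.
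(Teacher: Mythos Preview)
Your proof is correct and follows essentially the same route as the paper's: induction on formulae, with the quantifier steps handled via Proposition~\ref{teorema de cuantificadores} together with the diagonal trick $\mathbf{e}(i):=\mathbf{b}_{i}(i)$ (the paper writes this as $(c^{i}_{i})_{i\in I}$). The only noticeable stylistic difference is in the connective step: the paper argues directly with the modulus of uniform continuity $\Delta$ of $u$ to exhibit a set in $D$, whereas you invoke the general fact that continuous maps between compact Hausdorff spaces commute with ultrafilter limits; both arguments are valid and amount to the same thing here.
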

\begin{proof}
We proceed by induction on ${\bf L}$-formulae.\\

\begin{enumerate}
\item $\phi : d(x_1,x_2)$
\begin{eqnarray*}
(d(x_1,x_2))^{\mathcal{M}}((a^1_i)_{i \in I},(a^2_i)_{i \in I})&:=&d^{\mathcal{M}}((a^1_i)_{i \in I},(a^2_i)_{i \in I}) \\ 
      &=&lim_{i, D}d^{\mathcal{M}_i}(a^1_i,a^2_i) \\ && (\text{definition of a distance in a product} )\\
      &=&lim_{i, D}(d(x_1,x_2))^{\mathcal{M}_i}(a^1_i,a^2_i)) \\ 
\end{eqnarray*}

\item $\phi: R(x_1,...,x_n)$, where $R$ is a predicate symbol in ${\bf L}$.
\begin{eqnarray*}
(R(x_1,...,x_n)((a^1_i)_{i \in I},...,(a^n_i)_{i \in I}))^{\mathcal{M}}&:=&(R((a^1_i)_{i \in I},...,(a^n_i)_{i \in I}))^{\mathcal{M}} \\
&=&T^\prime\circ\theta((R^{\mathcal{M}_i}(a^1_i,...,a^n_i))_{i\in I})\\
&&(\text{by Definition~\ref{interpretaci\'on en el D-producto} (1)})\\
&=&lim_{i,D}R^{\mathcal{M}_i}((a^1_i,...,a^n_i))\\
&&(\text{by definition of $\theta$ and $T^\prime$})\\
\end{eqnarray*}

\item $\phi: u(\sigma_1,...,\sigma_m)(x_1,...,x_n)$, where $u:\mathbf{V}^m\to \mathbf{V}$ is a uniformly continuous mapping and $\sigma_1,\cdots \sigma_m$ are $\mathbf{L}$-formulae such that $\sigma_k^{\mathcal{M}}((a^1_i)_{i \in I},...,(a^n_i)_{i \in I})=lim_{i,D}\sigma_k^{\mathcal{M}_i}(a^1_i,...,a^n_i)$ for all $k\in\{1,...,m\}$ (induction hypothesis). For the sake of simplicity, denote\linebreak $((a^1_i)_{i \in I},...,(a^n_i)_{i \in I})=:\overline{a}$ and $(a^1_i,...,a^n_i)=\overline{a}_i$.
{
\begin{eqnarray*}
(u(\sigma_1,...\sigma_m))^{\mathcal{M}}(\overline{a})&:=&u(\sigma^{\mathcal{M}}_1(\overline{a}),...,\sigma^{\mathcal{M}}_m(\overline{a})) \\
&=& u(lim_{i,D}\sigma^{\mathcal{M}_i}_1(\overline{a}_i),...,lim_{i,D}\sigma^{\mathcal{M}_i}_m(\overline{a}_i) ) \\ &&(\text{induction hypothesis})\\
\end{eqnarray*}
}
Define $b_{i,k}:=\sigma^{\mathcal{M}_i}_k(\overline{a}_i)$ for any $k\in \{1,\cdots,m\}$ and $i\in I$.
Notice that $$\{i \in I : d_{V}(u(lim_{i,D}b_{i,1},...,lim_{i,D}b_{i,m}), u(b_{i,1},...,b_{i,m}))\leq \epsilon \}$$ contains the set
$$\{ i \in I: d_{V^n}((lim_{i,D}b_{i,1}...,lim_{i,D}b_{i,m}),(b_{i,1},...,b_{i,m})) \leq \Delta(\epsilon) \}$$
$$= \left\{ i \in I : \bigvee^n_{k=1} \{d_{V}(lim_{i,D}b_{i,k},b_{i,k} )\} \leq \Delta(\epsilon) \right\},$$ whenever $\Delta$ is a modulus of uniform continuity for $u$.\\ Notice that this previous set belongs to $D$, because it contains $\bigcap_{k=1}^n \left\{i \in I | d_{V}(lim_{i,D}b_{i,k},b_{i,k} ) \leq \Delta(\epsilon) \right\}$, which belongs to $D$ by definition of a $D$-limit and since $D$ is an ultrafilter.

Therefore, 
\begin{eqnarray*}
u(lim_{i,D}\sigma^{\mathcal{M}_i}_1(\overline{a}_i),...,lim_{i,D}\sigma^{\mathcal{M}_i}_m(\overline{a}_i)) &=& lim_{i,D} u(\sigma^{\mathcal{M}_i}_1(\overline{a}_i),\cdots \sigma^{\mathcal{M}_i}_m(\overline{a}_i))\\
&& lim_{i,D} \left( u(\sigma_1,\cdots, \sigma_m) \right)^{\mathcal{M}_i}(\overline{a}_i).
\end{eqnarray*}

\item $\phi: \bigvee_x \varphi(x,x_1,...,x_n)$\\
Let $\varphi(x,x_1,...,x_n)$ be an ${\bf L}$-formula such that $$\varphi^{\mathcal{M}}((b_i)_{i\in I},(a^1_i)_{i\in I},...,(a^n_i)_{i\in I})=lim_{i\in I}\varphi^{\mathcal{M}_i}(b_i,a^1_i,...,a^n_i)$$
for all $(b_i)_{i\in I}\in\prod_{i\in I}M_i$ (induction hypothesis). For the sake of simplicity, denote $\overline{a}:=((a^1_i)_{i \in I},...,(a^n_i)_{i \in I})$, $\overline{a}_i:=(a^1_i,...,a^n_i)$ and $b:=(b_i)_{i\in I}$. So, 

\begin{eqnarray*}
\left(\bigvee_x\varphi(x,x_1,...,x_n)\right)^{\mathcal{M}}(\overline{a})&=&\left(\bigvee_x\varphi(x,\overline{a})\right)^{\mathcal{M}} \\
&=&\bigvee\{\varphi^{\mathcal{M}}(b,\overline{a}) :b\in \mathcal{M}\} \\ &&  \text{(by Definition~\ref{interpretaci\'on de sentencias} (4)) } \\
&=&\bigvee\{lim_{i,D}\varphi^{\mathcal{M}_i}(b_i,\overline{a}_i): b\in \mathcal{M} \} \\ &&(\text{induction hypothesis})\\
&\leq& lim_{i,D}\bigvee\{\varphi^{\mathcal{M}_i}(b_i,\overline{a_i}): b\in \mathcal{M} \} \\  &&(\text{by Proposition~\ref{teorema de cuantificadores}})
\end{eqnarray*}

By Proposition~\ref{teorema de cuantificadores}, given $\epsilon \in{\bf V}^+$ there exists a sequence $(\overline{c^j})_{j\in I}$ of tuples $\overline{c^j}:=(c^j_i)_{i\in I}\in \prod_{i\in I} M_i$ such that 
\begin{eqnarray*}
lim_{i,D}(\bigvee\{\varphi^{\mathcal{M}_i}(b_i,\overline{a}_i): b\in \mathcal{M} \})&\leq& lim_{i,D}\varphi^{\mathcal{M}_i}(c^i_i, \overline{a}_i) + \epsilon\\
&\leq& \bigvee\{lim_{i,D}\varphi^{\mathcal{M}_i}(b_i,\overline{a}(i)): b\in \mathcal{M} \}+\epsilon
\end{eqnarray*}

where the last inequality follows from monotonicity of $\le$ and since $(c^i_i)_{i\in I}\in\prod_{i\in I}M_i$. Since $\epsilon \in {\bf V}^+$ is arbitrary, we have that  $lim_{i,D}\bigvee\{\varphi^{\mathcal{M}_i}(b_i,\overline{a}_i):b\in \mathcal{M}\}\leq lim_{i,D}\bigvee\{\varphi^{\mathcal{M}_i}(b_i,\overline{a}_i): b\in \mathcal{M} \}$. Therefore, by antisymmetry of $\le$ we may say that  $lim_{i,D}\bigvee\{\varphi^{\mathcal{M}_i}(b(i),\overline{a_i}): b\in \mathcal{M} \}=\bigvee\{lim_{i,D}\varphi^{\mathcal{M}_i}(b(i),\overline{a}(i)): b\in \mathcal{M} \}$. Notice that $\bigvee\{\varphi^{\mathcal{M}_i}(b_i,\overline{a}_i):b\in \mathcal{M}\}=\bigvee\{\varphi(c,\overline{a}_i):c\in \mathcal{M}_i\}$, then $(\bigvee_x\varphi(x,x_1,...,x_n))^{\mathcal{M}}(\overline{a})=lim_{i,D}\bigvee\{\varphi(c,\overline{a}_i):c\in \mathcal{M}_i\}=lim_{i,D}(\bigvee_x\varphi(x,x_1,...,x_n))^{\mathcal{M}_i}(\overline{a}_i)$, as desired.\\
\item $\phi: \bigwedge_x \varphi(x,x_1,...,x_n)$\\
Let $\varphi(x,x_1,...,x_n)$ be an ${\bf L}$-formula such that $$\varphi^{\mathcal{M}}((b_i)_{i\in I},(a^1_i)_{i\in I},...,(a^n_i)_{i\in I})=lim_{i\in I}\varphi^{\mathcal{M}_i}(b_i,a^1_i,...,a^n_i)$$ for all $(b_i)_{i\in I}\in\prod_{i\in I}M_i$. For the sake of simplicity, denote $\overline{a}:=((a^1_i)_{i \in I},...,(a^n_i)_{i \in I})$, $\overline{a}_i:=(a^1_i,...,a^n_i)$ and $b:=(b_i)_{i\in I}$. So, 
\begin{eqnarray*}
\left(\bigwedge_x\varphi(x,x_1,...,x_n)\right)^{\mathcal{M}}(\overline{a})&=&\left(\bigwedge_x\varphi(x,\overline{a})\right)^{\mathcal{M}}\\
&=&\bigwedge\{\varphi^{\mathcal{M}}(b,\overline{a}) :b\in \mathcal{M}\} \\ &&(\text{by Definition~\ref{interpretaci\'on de sentencias} (5)}) \\
&=&\bigwedge\{lim_{i,D}\varphi^{\mathcal{M}_i}(b_i,\overline{a}_i): b\in \mathcal{M} \} \\&&( \text{induction hypothesis})\\
&\geq& lim_{i,D}\bigwedge\{\varphi^{\mathcal{M}_i}(b_i,\overline{a}_i): b\in \mathcal{M} \} \\ &&( \text{by Proposition~\ref{teorema de cuantificadores}})
\end{eqnarray*}

By Proposition~\ref{teorema de cuantificadores} and by hypothesis, given $\epsilon \in{\bf V}^+$ there exists a sequence $(\overline{b^j})_{j\in I}$ of tuples $\overline{b^j}:=(b^j_i)_{i\in I}\in \prod_{i\in I} M_i$ such that 

\begin{eqnarray*}
lim_{i,D}(\bigwedge\{\varphi^{\mathcal{M}_i}(b_i,\overline{a}_i): b\in \mathcal{M} \})+\epsilon&\geq& lim_{i,D}\varphi^{\mathcal{M}_i}(b^i_i, \overline{a}_i)\\
&\geq& \bigwedge\{lim_{i,D}\varphi^{\mathcal{M}_i}(b_i,\overline{a}_i): b\in \mathcal{M} \}
\end{eqnarray*}

where the last inequality follows from the fact that $(b^i_i)_{i\in I}\in\prod_{i\in I}M_i$. Since $\epsilon \in {\bf V}^+$ was taken arbitrarily, we have that
$lim_{i,D}\bigwedge\{\varphi^{\mathcal{M}_i}(b_i,\overline{a}_i):b\in \mathcal{M}\}\geq\bigwedge\{lim_{i,D}\varphi^{\mathcal{M}_i}(b_i,\overline{a}_i): b\in \mathcal{M} \}\geq lim_{i,D}\bigwedge\{\varphi^{\mathcal{M}_i}(b(i),\overline{a_i}): b\in \mathcal{M} \}$. By antisymmetry of $\le$, we may say that $$lim_{i,D}\bigwedge\{\varphi^{\mathcal{M}_i}(b_i,\overline{a_i}): b\in \mathcal{M} \}=\bigwedge\{lim_{i,D}\varphi^{\mathcal{M}_i}(b_i,\overline{a}_i): b\in \mathcal{M} \}.$$ Since, $\bigwedge\{\varphi^{\mathcal{M}_i}(b_i,\overline{a}_i):b\in \mathcal{M}\}=\bigwedge\{\varphi^{\mathcal{M}_i}(c,\overline{a}_i):c\in \mathcal{M}_i\}$, then $(\bigwedge_x\varphi(x,x_1,...,x_n))^{\mathcal{M}}(\overline{a})=lim_{i,D}\bigwedge\{\varphi(c,\overline{a}_i):c\in \mathcal{M}_i\}=lim_{i,D}(\bigwedge_x\varphi(x,x_1,...,x_n))^{\mathcal{M}_i}(\overline{a}_i)$, as desired.
\end{enumerate}
\end{proof}

\begin{nota}
In case that we want to work in symmetric spaces, the same argument as above works  to prove a version of \L o\'s Theorem for the $D$-ultraproduct $\left(\prod_{i\in I}\mathcal{M}_i\right)_{\sim}$.
\end{nota}

\subsubsection{Some consequences of \L o\'s Theorem.}

In this setting, \L o\'s Theorem implies a version of Compactness Theorem and the existence of some kind of $\omega_1$-saturated models, as it holds in both first order and Continuous logics.

First, we provide a proof of Compactness Theorem, up to \L o\'s Theorem.

\begin{Coro}(Compactness Theorem, c.f. \cite{BeBeHeUs} Thrm 5.8)\label{teorema de compacidad}
Let ${\bf L}$ be a language based on a continuous structure. Let $\textit{T}$ be an  ${\bf L}-$ theory which conditions satisfy the hypothesis of Theorem~\ref{Teorema de Los} and $\mathcal{C}$ be a  class of ${\bf L}$-structures. Therefore, if $\textit{T}$ is finitary satisfiable in $\mathcal{C}$, then there exists a $D$-product of structures in $\mathcal{C}$ that is a model of $\textit{T}$.
\end{Coro}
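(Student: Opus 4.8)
The plan is to run the classical ultraproduct proof of compactness, now that \L o\'s Theorem (Theorem~\ref{Teorema de Los}) is available in our setting. Assume $T$ is finitely satisfiable in $\mathcal{C}$ (and note that then $\mathcal{C}\neq\emptyset$; if $T=\emptyset$ any $D$-power of a single structure in $\mathcal{C}$ already works, so we may assume $T\neq\emptyset$). First I would take as index set $I$ the collection of all finite subsets of $T$, and for each $i=T_0\in I$ choose, by finite satisfiability, an ${\bf L}$-structure $\mathcal{M}_i\in\mathcal{C}$ with $\mathcal{M}_i\models T_0$. For each closed condition $E\in T$ put $\widehat{E}:=\{\,i\in I : E\in i\,\}$. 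The family $\{\widehat{E} : E\in T\}$ has the finite intersection property, since $\{E_1,\dots,E_m\}\in\widehat{E_1}\cap\cdots\cap\widehat{E_m}$; hence it generates a proper filter on $I$, which by the ultrafilter lemma extends to an ultrafilter $D$ on $I$.

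Next I would form the $D$-product $\mathcal{M}_D:=\prod_{i\in I}\mathcal{M}_i$ of Definition~\ref{interpretaci\'on en el D-producto}. This is legitimate because ${\bf V}$ is co-divisible and a ${\bf V}$-domain, so all the constructions of the previous subsection apply; Remark~\ref{nota importante} guarantees the interpreted symbols keep the moduli of uniform continuity prescribed by ${\bf L}$, and since each factor has diameter at most $diam(M)$ and $d_M(x_i,y_i)\le diam(M)$ for all $i$, Proposition~\ref{acotar D-l\'imites}(2) gives $diam(M_D)\le diam(M)$, so $\mathcal{M}_D$ is indeed an ${\bf L}$-structure (cf. Definition~\ref{L-estructura}). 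The key point is then: for a closed condition $E:\phi=0$ in $T$ we have $\mathcal{M}_i\models E$, i.e. $\phi^{\mathcal{M}_i}=0$, for every $i$ in the set $\widehat{E}\in D$. I would observe that this forces $\lim_{i,D}\phi^{\mathcal{M}_i}=0$: for every $\epsilon\in{\bf V}^+$ the set $\{\,i\in I : d_V(0,\phi^{\mathcal{M}_i})\le\epsilon\,\}$ contains $\widehat{E}\in D$ (since $d_V(0,0)=0\le\epsilon$), so by the defining property and uniqueness of $D$-ultralimits (Definition~\ref{D-ultral\'imite} together with Lemma~\ref{lema de convergencia de ultrafiltros}) the $D$-ultralimit is $0$; alternatively apply Proposition~\ref{acotar D-l\'imites}(2) with $b=0$.

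Finally, applying \L o\'s Theorem (Theorem~\ref{Teorema de Los}) to the sentence $\phi$ yields
$$\phi^{\mathcal{M}_D}=\lim_{i,D}\phi^{\mathcal{M}_i}=0,$$
that is, $\mathcal{M}_D\models E$. Since $E\in T$ was arbitrary, $\mathcal{M}_D$ is a model of $T$, and it is a $D$-product of structures in $\mathcal{C}$, as required. I do not expect any serious obstacle here, as the heavy lifting is already done in \L o\'s Theorem and in the $D$-ultralimit machinery; the only points demanding care are the use of the hypothesis that each condition of $T$ satisfies the assumptions of Theorem~\ref{Teorema de Los} (needed precisely to invoke \L o\'s Theorem on conditions whose formula $\phi$ contains quantifiers, via the Cauchy-type hypothesis of Proposition~\ref{ser de Cauchy discreto}), the verification that $0$ is the $D$-ultralimit of the family $(\phi^{\mathcal{M}_i})_{i\in I}$, and the bookkeeping ensuring $\mathcal{M}_D$ is a genuine ${\bf L}$-structure.
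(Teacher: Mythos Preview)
Your proof is correct and follows essentially the same approach as the paper: index by finite subsets of $T$, build the ultrafilter from the sets $\widehat{E}=\{i:E\in i\}$ (the paper calls these $S(E)$), form the $D$-product, and apply \L o\'s Theorem. Your write-up is in fact somewhat more careful than the paper's, explicitly verifying that $0$ is the $D$-ultralimit and that $\mathcal{M}_D$ is an ${\bf L}$-structure, but the underlying argument is identical.
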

\begin{proof}
Let $\Lambda$ be the collection of all finite subsets of $\textit{T}$. By hypothesis, given $\lambda\in\Lambda$ with $\lambda:=\{E_1,...,E_n\}$, there exists some $\mathcal{M}_\lambda\in\mathcal{C}$ such that $\mathcal{M}_\lambda\models E_k$ for all $k\in\{1,...,n\}$.

Fixed an ${\bf L}$-condition $E\in\textit{T}$, define $S(E):=\{\lambda\in\Lambda : E\in\lambda\}$. Notice that $S(E_1)\cap ... \cap S(E_n)\neq \emptyset$ ( $\lambda:=\{E_1,\cdots,E_n\}\in S(E_1)\cap ...\cap S(E_n)$), therefore $\{S(E) : E\in \textit{T}\}$ satisfies the Finite Intersection Property. Let $D$ be an ultrafilter over $\Lambda$ extending $\{S(E) : E\in \textit{T}\}$. Let  $\mathcal{M}:=\prod_{\lambda\in \Lambda}\mathcal{M}_\lambda$ be the respective $D$-product of the sequence of $\mathbf{L}$-structures $(\mathcal{M}_\lambda)_{\lambda\in \Lambda}$. Given $E\in \textit{T}$, where $E:\psi=0$ ($\psi$ an ${\bf L}$-sentence). Notice that for all $\lambda\in S(E)$ we have that $\mathcal{M}_\lambda\models E$; i.e., $\psi^{\mathcal{M}_\lambda}=0$. Since by construction $S(E)\in D$, by \L o\'s Theorem (Theorem~\ref{Teorema de  Los}) it follows that $lim_{\lambda,D}\psi^{\mathcal{M}_\lambda}=0$. Notice that $\psi^{\mathcal{M}}=lim_{\lambda,D}\psi^{\mathcal{M}_\lambda}=0$, so $\mathcal{M}\models E$. Therefore, $\mathcal{M}\models\textit{T}$.
\end{proof}

Keisler showed in \cite{Keisler1964} the existence of saturated structures by using ultraproducts. In \cite{BeBeHeUs}, there is a proof of an analogous result to Keisler's construction  by using metric ultraproducts. In the following lines, we provide a proof of this result in the logic propposed in this paper, supposing that $\mathbf{V}$ is provided with the symmetric distance (abusing of the notation, we will denote by $d_V$) and  co-divisible value co-quantale satistying the SAFA Property (Definition~\ref{propiedad SAFA}).

\begin{defi}
Let ${\bf L}$ be a language based on a continuous structure, $\Gamma(x_1,...,x_n)$ be a set of ${\bf L}$-conditions and $\mathcal{M}$ be an ${\bf L}$-structure. We say that $\Gamma(x_1,...,x_n)$ is satisfiable in $\mathcal{M}$, if and only if, there exist $a_1,...,a_n\in M$ such that $\mathcal{M}\models E(a_1,...,a_n)$ for all $E(x_1,\cdots, x_n)\in \Gamma(x_1,...,x_n)$.
\end{defi}

\begin{defi}
Let ${\bf L}$ be a language based on a continuous structure , $\mathcal{M}$  be an ${\bf L}$-structure and $\kappa$ be an infinite cardinal. We say that $\mathcal{M}$ is {\bf $\kappa$-saturated}, if and only if, given $A\subseteq M$ such that $|A|<\kappa$ and $\Gamma(x_1,...,x_n)$ a set of ${\bf L}(A)$-conditions with parameters in $A$, it holds that if $\Gamma(x_1,...,x_n)$ is finitely satisfiable in $\mathcal{M}$ then $\Gamma(x_1,...,x_n)$ is satisfiable in $\mathcal{M}$. 
\end{defi}


\begin{defi}
An ultrafilter $D$ over $K\neq \emptyset$ is said to be {\bf countably-incomplete}, if and only if, there exists some $\{A_n: n\in \mathrm{N}\}\subseteq D$ such that $\bigcap_{n\in \mathrm{N}}A_n=\emptyset$.
\end{defi}

\begin{Prop}\label{caracterizaci\'on contable-incompleto}
Let $D$ be an coutably-incomplete ultrafilter over $K\neq\emptyset$, then there exists a countable subcollection $\{J_n: n\in \mathbb{N}\}$ of $D$ such that $J_{n+1}\subseteq J_n$ for all $n\in\mathbb{N}$ and $\bigcap_{n\in \mathrm{N}}J_n=\emptyset$.
\end{Prop}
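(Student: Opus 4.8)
The statement asks only to upgrade the witnessing family guaranteed by countable incompleteness to one that is additionally decreasing, while keeping its intersection empty and staying inside $D$. The natural move is to replace the given family by its sequence of ``running finite intersections''.

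First I would invoke the definition of countably-incomplete to fix a countable family $\{A_n : n\in\mathbb{N}\}\subseteq D$ with $\bigcap_{n\in\mathbb{N}}A_n=\emptyset$. Then I would define
\[
J_n:=\bigcap_{k=0}^{n}A_k\qquad(n\in\mathbb{N}).
\]
Since $D$ is in particular a filter, it is closed under finite intersections, so each $J_n$ is a finite intersection of members of $D$ and hence $J_n\in D$; this gives $\{J_n:n\in\mathbb{N}\}\subseteq D$. Monotonicity is immediate: $J_{n+1}=J_n\cap A_{n+1}\subseteq J_n$ for all $n$. Finally, for the empty intersection, I would argue $\bigcap_{n\in\mathbb{N}}J_n=\bigcap_{n\in\mathbb{N}}\bigcap_{k\le n}A_k=\bigcap_{k\in\mathbb{N}}A_k=\emptyset$, the middle equality being a routine rearrangement of nested intersections (every $A_k$ appears as $J_k$'s last factor, and conversely every $J_n$ contains $\bigcap_k A_k$).

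There is no real obstacle here: the only point to be careful about is that we never need $D$ to be an ultrafilter, merely a filter, so the argument goes through verbatim; and that the indexing convention ($\mathbb{N}$ starting at $0$ or $1$) is harmless, since one may always prepend $J_{-1}:=K\in D$ if a top element is desired. Thus the constructed family $\{J_n:n\in\mathbb{N}\}$ has all three required properties, which completes the proof.
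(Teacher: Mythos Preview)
Your proof is correct and is essentially the same as the paper's: the paper also takes the witnessing family $\{A_n\}$ and sets $J_0:=A_0$, $J_{n+1}:=J_n\cap A_{n+1}$, which is exactly your running-intersection construction $J_n=\bigcap_{k\le n}A_k$. If anything, you supply more detail than the paper does (closure of $D$ under finite intersections, the verification that $\bigcap_n J_n=\bigcap_n A_n$), so nothing further is needed.
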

\begin{proof}
By hypothesis, there exists a subcollection $\{A_n: n\in \mathrm{N}\}\subseteq D$ of $D$ tal que $\bigcap_{n\in \mathrm{N}}A_n=\emptyset$. Define $J_0:=A_0$ and $J_{n+1}:=J_n\cap A_{n+1}$ for any $n\in \mathbb{N}\setminus\{0\}$.
\end{proof}

\begin{Prop}\label{existencia de modelos saturados}(c.f. \cite{BeBeHeUs}; Prop. 7.6)
Let ${\bf V}$ be a compact, Hausdorff, co-divisible value co-quantale satisfying SAFA, provided with the symmetric distance $d_V$. Let ${\bf L}$ be a countable language based on a continuous structure and $D$ be a countably-incomplete ultrafilter over a non empty set $\Lambda$. Given any $\Lambda$-sequence of ${\bf L}$-structures $(\mathcal{M}_\lambda)_{\lambda\in\Lambda}$, its $D$-product $M_D$ is $\omega_1$-saturated, assuming that all $\mathbf{L}$-formulae satisfy the hypothesis in \L o\'s Theorem (Theorem~ \ref{Teorema de Los}). 
\end{Prop}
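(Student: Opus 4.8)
The plan is to adapt Keisler's classical ultrapower construction of saturated models, in the form used in \cite{BeBeHeUs} for Continuous Logic, with the scalars $1/m$ replaced by a SAFA sequence of $\mathbf{V}$. Fix $A\subseteq M_D$ with $|A|<\omega_1$ and a set $\Gamma(x_1,\dots,x_n)$ of $\mathbf{L}(A)$-conditions which is finitely satisfiable in $\mathcal{M}_D$; since $\mathbf{L}$ and $A$ are countable we may enumerate $\Gamma=\{E_k:k\in\mathbb{N}\}$ with $E_k:\phi_k(x_1,\dots,x_n)=0$, and, writing out the parameters, $\phi_k(\bar x)=\psi_k(\bar x,\bar a^{(k)})$ for an $\mathbf{L}$-formula $\psi_k$ and a tuple $\bar a^{(k)}$ from $A$. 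Recall that, $\mathcal{M}_D$ being a $D$-\emph{product}, every element of $M_D$ is a genuine sequence, so $\bar a^{(k)}=(\bar a^{(k)}(\lambda))_{\lambda\in\Lambda}$. Using Proposition~\ref{caracterizaci\'on contable-incompleto} fix a decreasing chain $\Lambda=J_0\supseteq J_1\supseteq J_2\supseteq\cdots$ with each $J_m\in D$ and $\bigcap_m J_m=\emptyset$, and let $(u_m)_{m\in\mathbb{N}}$ be a SAFA sequence for $\mathbf{V}$ (so $u_{m+1}\le u_m$, $0\prec u_m$, $\bigwedge_m u_m=0$).

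For $m\in\mathbb{N}$ set
\[
X_m:=\bigl\{\lambda\in\Lambda : \text{there is }\bar c\in M_\lambda^{\,n}\text{ with }\psi_k^{\mathcal{M}_\lambda}(\bar c,\bar a^{(k)}(\lambda))\le u_m\text{ for all }k\le m\bigr\}
\]
(with $X_0=\Lambda$). First I would check that $X_{m+1}\subseteq X_m$, which is immediate from $u_{m+1}\le u_m$, and that $X_m\in D$: by finite satisfiability there is $\bar b^{(m)}\in M_D^{\,n}$ with $\phi_k^{\mathcal{M}_D}(\bar b^{(m)})=0$ for $k\le m$; by \L o\'s Theorem (Theorem~\ref{Teorema de Los}) this reads $lim_{\lambda,D}\psi_k^{\mathcal{M}_\lambda}(\bar b^{(m)}(\lambda),\bar a^{(k)}(\lambda))=0$, and since $d_V(0,p)=p$ (Remark~\ref{la distancia al 0 en el caso sim\'etrico}) and $u_m\in\mathbf{V}^+$, the definition of $D$-ultralimit (Definition~\ref{D-ultral\'imite}) gives $\{\lambda:\psi_k^{\mathcal{M}_\lambda}(\bar b^{(m)}(\lambda),\bar a^{(k)}(\lambda))\le u_m\}\in D$ for each $k\le m$; intersecting these finitely many $D$-sets yields a $D$-set contained in $X_m$. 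Now put $Y_m:=X_m\cap J_m\in D$; the chain $(Y_m)_m$ is decreasing with $\bigcap_m Y_m=\emptyset$ and $Y_0=\Lambda$, so for each $\lambda$ the set $\{m:\lambda\in Y_m\}$ is a finite initial segment $\{0,\dots,\ell(\lambda)\}$. For $\lambda$ with $\ell(\lambda)\ge 1$ choose, via $\lambda\in X_{\ell(\lambda)}$, a witness $\bar b(\lambda)=(b_1(\lambda),\dots,b_n(\lambda))\in M_\lambda^{\,n}$ with $\psi_k^{\mathcal{M}_\lambda}(\bar b(\lambda),\bar a^{(k)}(\lambda))\le u_{\ell(\lambda)}$ for all $k\le \ell(\lambda)$; for $\ell(\lambda)=0$ pick $\bar b(\lambda)$ arbitrarily. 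Set $b_j:=(b_j(\lambda))_{\lambda\in\Lambda}\in M_D$ and $\bar b:=(b_1,\dots,b_n)$.

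It then remains to verify that $\mathcal{M}_D\models E_k(\bar b)$ for every $k$. Fix $k$ and $\epsilon\in\mathbf{V}^+$. Since $\bigwedge_m u_m=0\prec\epsilon$, Lemma~\ref{continuidad} yields $m_0$ with $u_{m_0}\prec\epsilon$, hence $u_{m_0}\le\epsilon$ by Lemma~\ref{propiedades b\'asicas de la relaci\'on prec}(1); enlarging $m_0$ if necessary we may assume $m_0\ge k$ (as $(u_m)$ decreases, $u_{m_0}\le\epsilon$ is preserved). For $\lambda\in Y_{m_0}\in D$ we have $\ell(\lambda)\ge m_0\ge k$, so by choice of $\bar b(\lambda)$, $\psi_k^{\mathcal{M}_\lambda}(\bar b(\lambda),\bar a^{(k)}(\lambda))\le u_{\ell(\lambda)}\le u_{m_0}\le\epsilon$; hence by Proposition~\ref{acotar D-l\'imites}(2) (using that $d_V$ is symmetric) $lim_{\lambda,D}\psi_k^{\mathcal{M}_\lambda}(\bar b(\lambda),\bar a^{(k)}(\lambda))\le\epsilon$. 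As $\epsilon\in\mathbf{V}^+$ was arbitrary and $\bigwedge\mathbf{V}^+=0$ (Fact~\ref{descender con la suma}), this $D$-limit equals $0$, and by \L o\'s Theorem again $\phi_k^{\mathcal{M}_D}(\bar b)=0$, i.e.\ $\mathcal{M}_D\models E_k(\bar b)$. Thus $\bar b$ realizes $\Gamma$ in $\mathcal{M}_D$, proving $\omega_1$-saturation.

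I expect the construction itself to be routine once the dictionary is set up; the points that need care are the use of the SAFA sequence as a substitute for the scalars $1/m$ (this is exactly where that hypothesis enters), the identity $d_V(0,p)=p$ for the symmetric distance (which is what turns ``the $D$-limit is $0$'' into ``$\psi_k$ lies below a prescribed $\epsilon$ on a $D$-large set of coordinates''), and the bookkeeping that makes $(Y_m)_m$ a decreasing chain with empty intersection so that $\ell(\lambda)$ is well defined. The only genuinely external ingredient is \L o\'s Theorem (Theorem~\ref{Teorema de Los}), which is why its hypotheses on the formulae are assumed throughout; note also that, since $\mathcal{M}_D$ is the $D$-product rather than the $D$-ultraproduct, no quotient intervenes on the side of the $\mathcal{M}_\lambda$ and all tuples restrict coordinatewise without further ado.
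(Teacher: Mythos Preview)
Your proposal is correct and follows essentially the same approach as the paper: both adapt Keisler's classical argument by intersecting a decreasing $D$-chain coming from countable incompleteness with the $D$-sets of coordinates where finitely many conditions are approximately satisfied (with the SAFA sequence $(u_m)$ playing the role of $1/m$), then choosing coordinatewise witnesses according to the maximal index and concluding via \L o\'s Theorem. Your write-up is in fact a bit more explicit than the paper's in citing Proposition~\ref{acotar D-l\'imites}(2) and Remark~\ref{la distancia al 0 en el caso sim\'etrico} at the final step, and in handling $n$ free variables rather than reducing to one, but the underlying argument is the same.
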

\begin{proof}
For the sake of simplicity, let us analyze  ${\bf L}$-conditions with one variable $x$. Let $A\subseteq \prod_{\lambda\in\Lambda}M_{\lambda}$ be countable and $\Gamma(x)$ be a set of ${\bf L}(A)$-conditions with parameters in $A$ which is finitely satisfiable in $\mathcal{M}_D$. We will prove that $\Gamma
(x)$ is satisfiable in $\mathcal{M}_D$. 

Since ${\bf L}$ is countable, let $\Gamma(x):=\{\psi_n(x) : n<\omega\}$ be an enumeration of $\Gamma(x)$. Since $D$ is contably-incomplete, by Proposition~\ref{caracterizaci\'on contable-incompleto} there exists a sequence  $(J_n)_{n\in\mathbb{N}}$ of elements in $D$ such that $J_{n+1}\subseteq J_n$ for all $n\in \mathbb{N}$ and $\bigcap_{n\in\mathbb{N}}J_n=\emptyset$.

By hypothesis, for all $k\in\mathbb{N}$ the set $\{\psi_1(x),...,\psi_k(x)\}$ is satisfiable in $\prod_{\lambda\in\Lambda}\mathcal{M}_{\lambda}$. By \L o\'s Theorem (Theorem~\ref{Teorema de Los}), there exists some $a:=(a_\lambda)_{\lambda\in \Lambda}\in \prod_{\lambda\in\Lambda}M_{\lambda}$ such that for all $n\in \{1,\cdots,k\}$ we have that $\psi_n^{M_D}\mathcal{M}_{\lambda}(a)=lim_{\lambda,D}\psi^{\mathcal{M}_\lambda}_n(a_\lambda)=0$. Therefore, given $\epsilon\in{\bf V}^+$ we may say that $\{\lambda\in\Lambda : d_V(0,\psi^{\mathcal{M}_\lambda}_n(a_\lambda))\leq \epsilon\}\in D$. Since $d_V$ is the symmetric distance, by Remark~\ref{la distancia al 0 en el caso sim\'etrico} we have that $\{\lambda\in\Lambda : \psi^{\mathcal{M}_\lambda}_n(a_\lambda)\leq\epsilon\}\in D$. Since ${\bf V}$ satisfies SAFA Property (Definition~\ref{propiedad SAFA}), there exists a sequence $(u_k)_{k\in\mathbb{N}}$ in $\mathbf{V}$ such that
\begin{enumerate}
    \item $\bigwedge_{n\in\mathbb{N}}u_n=0.$
    \item for all $n\in\mathbb{N}$, $0\prec u_n$
    \item for all $n\in\mathbb{N}$, $u_{n+1}\leq u_n$
\end{enumerate}
Therefore, 
$\{\lambda\in\Lambda : \psi^{\mathcal{M}_\lambda}_n(a_\lambda)\leq u_l\}\in D$ for all $l\in \mathbb{N}$. This implies that $A_k:=\{\lambda\in \Lambda : \mathcal{M}_{\lambda}\models \bigwedge_{x\in M_{\lambda}}\bigvee^k_{n=1}\psi_n(x)\leq u_{k+1}\}\in D$ whenever $k\in\mathbb{N}$. Define the sequence  $(X_n)_{n\in\mathbb{N}}$ of elements in $D$ as follows: $X_0:=\Lambda$ and $X_k:=J_k\cap A_k$ if $k\in\mathbb{N}\setminus\{0\}$. Notice that $\bigwedge_{x\in M_\lambda}\bigvee_{n=1}^k \psi_n(x)\le \bigvee_{x\in M_\lambda}\bigwedge_{n=1}^{k+1} \psi_n(x)\le u_{k+2}\le u_{k+1}$, then $A_{n+1}\subseteq A_n$ for all $n\in \mathbb{N}$ and then $X_{n+1}\subseteq X_n$ for all $n\in \mathbb{N}$. Notice that $\bigcap_{n\in\mathbb{N}}X_n=\emptyset$, therefore if $\lambda\in \Lambda$ there exists $k_\lambda\in\mathbb{N}$ such that $k_\lambda:=max\{n\in\mathbb{N}: \lambda\in X_n\}$. Define $\overline{a}:=(a_\lambda)_{\lambda\in\Lambda}\in \prod_{\lambda\in\Lambda}\mathcal{M}_{\lambda}$ as follows: In case that $k_\lambda=0$ , take $a_\lambda$ as any element in $M_{\lambda}$, otherwise take $a_\lambda\in M_\lambda$ such that $\bigvee\{\psi^{\mathcal{M}_\lambda}_n(a_\lambda) : n\leq k_\lambda \}\leq u_{k_\lambda}$. So, if $k\in\mathbb{N}$ then for any $n\in\mathbb{N}$ such that $k\leq n$ and  $\lambda\in X_n$ we have that $n\leq k_\lambda$, hence $\psi^{\mathcal{M}_\lambda}_k(a_\lambda)\leq u_{k_\lambda}\leq u_n$. Since $X_n\in D$, by \L o\'s Theorem (Theorem~\ref{Teorema de Los}) we have that  $\psi_k^{\prod_{\lambda\in\Lambda}\mathcal{M}_\lambda}(\overline{a})=lim_{\lambda,D}\psi^{M_D}_k(a_\lambda)=0$. Since $\psi_k(x)\in\Gamma(x)$ was taken arbitrarily, then $\overline{a}\in \prod_{\lambda\in\Lambda}\mathcal{M}_\lambda$ realizes $\Gamma(x)$, as desired.
\end{proof}

\section{Characterization of Continuous Logic and new logics}
J. Iovino proved in~\cite{Io01} that there is no any logic 
extending properly (logics equivalent to) Continuous Logic satisfying both\linebreak
Countable Tarski-Vaught chain Theorem and Compactness Theorem. Therefore, assuming that a value co-quantale is co-Girard, co-divisible (substractable) and a $V$-domain, we already proved that the logic given in Section~\ref{VLogics} satisfies the Tarski-Vaught test (Proposition~\ref{test de Tarski-Vaught}) and \L o\'s Theorem (Theorem~\ref{Teorema de Los}); therefore this logic satisfies the Countable Tarski-Vaught chain Theorem and Compactness Theorem (Theorem~\ref{teorema de compacidad}).
\ \\ \\
\indent 
It is straightforward to see that $\mathbb{V}:=([0,1],0,+,\le)$ is co-Girard, co-divisible and a $V$-domain. In this way, we got a characterization of Continous Logic by using the approach studied in this paper.
\ \\ \\
\indent Therefore, if we drop any of these assumptions, it suggests that we get new logics in the way proposed in Section~\ref{VLogics}.
\indent For example, notice that $[0,\infty]$ is not a $[0,\infty]$-domain (Remark 4.11 and Theorem 4.8, \cite{FlaKop97}). Hence, $[0,\infty]$ yields a new logic different to Continuous Logic. 
\bibliographystyle{alpha}
\bibliography{main2}
\end{document}